\theoremstyle{plain}
\newtheorem{theorem}{Theorem}
\newtheorem{proposition}{Proposition}
\newtheorem{lemma}{Lemma}
\theoremstyle{definition}
\newtheorem{definition}{Definition}
\newtheorem{example}{Example}
\newtheorem{notation}{Notation}
\newtheorem{remark}{Remark}
\DeclareMathOperator{\Res}{Res}
\DeclareMathOperator{\red}{red}
\DeclareMathOperator{\Sing}{Sing}
\newcommand{\enm}[1]{\ensuremath{#1}}          %
\newcommand{\cal}[1]{\mathcal{#1}}
\newcommand{\NN}{\enm{\mathbb{N}}}
\newcommand{\ZZ}{\enm{\mathbb{Z}}}
\newcommand{\PP}{\enm{\mathbb{P}}}
\newcommand{\Aa}{\enm{\cal{A}}}
\newcommand{\Bb}{\enm{\cal{B}}}
\newcommand{\Ff}{\enm{\cal{F}}}
\newcommand{\Hh}{\enm{\cal{H}}}
\newcommand{\Ii}{\enm{\cal{I}}}
\newcommand{\Oo}{\enm{\cal{O}}}
\renewcommand{\phi}{\varphi}
\renewcommand{\theta}{\vartheta}
\renewcommand{\epsilon}{\varepsilon}
\renewcommand{\to}[1][]{\xrightarrow{\ #1\ }}
\newcommand{\bb}{\mathfrak{b}}
\newcommand{\old}[1]{}
\date{}
\begin{document}

\title[curves]
{On the Hilbert function of general unions of curves in projective spaces}
\author{Edoardo Ballico}
\address{Dept. of Mathematics\\
 University of Trento\\
38123 Povo (TN), Italy}
\email{ballico@science.unitn.it}
\thanks{The author was partially supported by MIUR and GNSAGA of INdAM (Italy).}
\subjclass[2010]{14H50}
\keywords{curve in projective spaces; postulation; Hilbert scheme; Hilbert function}

\begin{abstract}
Let $X=X_1\cup \cdots \cup X_s\subset \PP^n$, $n\ge 4$, be a general union of smooth non-special curves with $X_i$ of degree
$d_i$ and genus $g_i$ and $d_i\ge \max \{2g_i-1,g_i+n\}$ if $g_i>0$. We prove that $X$ has maximal rank, i.e. for any $t\in
\NN$ either
$h^0(\Ii _X(t))=0$ or $h^1(\Ii_X(t))=0$.
\end{abstract}

\maketitle
\section{Introduction}
Let $X\subset \PP^n$ be a closed subscheme. $X$ is said to have \emph{maximal rank} if for every $t\in \NN$, the restriction map $H^0(\Oo _{\PP^n}(t))\to H^0(\Oo _X(t))$ has maximal rank, i.e. it is injective or surjective. Note that $X$ has maximal rank if and only if for each $t\in \NN$ either $h^0(\Ii _X(t)) =0$ or  $h^1(\Ii
_X(t))=0$. Since \cite{hh0,hi} there was a long quest to prove that `` general '' curves with fixed degree and genus in
$\PP^r$ have maximal rank (\cite{be01,be00,be2,be4,be7,be5}). For curves with general moduli the Maximal Rank Conjecture was
proved by E. Larson (\cite{l8,l9}). At least $4$ papers were devoted to general unions of simpler curves: general unions of
lines (\cite{hh0}), general unions of a rational curve of a prescribed degree and a prescribed number of lines in $\PP^3$
(\cite{hh3}), general unions of smooth rational curves in $\PP^n$, $n\ge 4$ (\cite{be6}) and general unions of smooth rational
curves in $\PP^3$ (\cite{b}).

In this paper we prove the following result.

\begin{theorem}\label{i1}
Fix integers $n\ge 4$, $s>0$, $g_i\ge 0$. If $g_i=0$ assume $d_i>0$. If $g_i\ge 1$ assume $d_i\ge \max \{2g_i-1,g_i+n\}$ . Let
$X = X_1\cup
\cdots
\cup
\subset
\PP^n$ be a general union of $s$ smooth curves with $\deg (X_i)=d_i$ and $p_a(X_i) =g_i$ for all $i$. Then $X$ has maximal rank.\end{theorem}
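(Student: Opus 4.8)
The plan is to reduce the statement to the critical degrees and then run a Castelnuovo--Horace induction. Because $n\ge 4$, a general union has the $X_i$ pairwise disjoint, so $\chi(\Oo_X(t))=\sum_i(td_i-g_i+1)$; and since $d_i\ge 2g_i-1$ we have $h^1(\Oo_{X_i}(t))=0$ for $t\ge 1$, so this Euler characteristic equals $h^0(\Oo_X(t))$ and
\[
h^0(\Ii_X(t))-h^1(\Ii_X(t))=\binom{n+t}{n}-\sum_i(td_i-g_i+1).
\]
This difference is strictly convex in $t$ and positive for $t\gg 0$, so it changes sign at most twice; at the lower transition maximal rank asks for $h^0(\Ii_X(t))=0$ and at the upper one for $h^1(\Ii_X(t))=0$. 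The first vanishing descends to all smaller $t$ by twisting down, and the second propagates to all larger $t$: a general hyperplane section $X\cap H$ is a general set of $\sum_i d_i$ points of $\PP^{n-1}$, which has maximal rank, so the sequence $0\to\Ii_X(t)\to\Ii_X(t+1)\to\Ii_{X\cap H,H}(t+1)\to 0$ forces $h^1(\Ii_X(t+1))=0$ once $h^1(\Ii_X(t))=0$. Hence it suffices to prove the two vanishings at the (at most two) critical degrees.

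I would prove these by a double induction: on $n\ge 4$, and for fixed $n$ on the total invariant $\sum_i(d_i+g_i)$, taking as inductive hypothesis the slightly stronger statement that a general union of such curves together with a general finite set of points has maximal rank (points inevitably appear as hyperplane traces). The case $\sum_i g_i=0$ is the rational-curve theorem \cite{be6}, and the bottom of the induction on $n$ (the case of $\PP^3$) must be installed separately. For the step, fix a hyperplane $H\cong\PP^{n-1}$ and specialize $X$ to a flat limit $X'$ adapted to $H$: a carefully chosen subset of the curves is degenerated into $H$, where, since $d_i\ge g_i+n\ge g_i+(n-1)$, they remain admissible curves of $\PP^{n-1}$ feeding the trace, while the rest stay transverse to $H$, so that their traces are general points of $H$ and their residuals are the full curves twisted by $-1$. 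The restriction sequence
\[
0\to\Ii_{\Res_H(X')}(t-1)\to\Ii_{X'}(t)\to\Ii_{X'\cap H,H}(t)\to 0
\]
then bounds the cohomology of $\Ii_{X'}(t)$ by that of the residual in $\PP^n$ and of the trace in $\PP^{n-1}$, both of which I arrange to fall under the inductive hypothesis; semicontinuity pushes the resulting vanishing back to the general $X$.

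The main obstacle is that no honest partition of the curves between ``inside $H$'' and ``transverse to $H$'' will make the two sides of the restriction sequence balance: the trace in $\PP^{n-1}$ at degree $t$ and the residual in $\PP^n$ at degree $t-1$ each demand a prescribed Euler characteristic, and these demands are generically incompatible with any integer split of the $X_i$. To absorb the discrepancy I would invoke the m\'ethode d'Horace diff\'erentielle, degenerating one or two curves so that they meet $H$ with prescribed higher-order contact, thereby dividing a single curve's numerical contribution between trace and residual in exactly the proportion the count requires. Making this precise is the technical heart: one must (i) realize the degenerations as flat limits whose Hilbert function is computed by the limiting Horace sequence, which is where the hypotheses $d_i\ge 2g_i-1$ and $d_i\ge g_i+n$ are essential, since they guarantee that the relevant curves exist in $\PP^n$ and $\PP^{n-1}$, deform with $h^1$ of the normal bundle vanishing, and have the expected transverse hyperplane sections; and (ii) check that the residual curves still satisfy the degree--genus inequalities needed to reapply the inductive hypothesis, which entails a finite case analysis governed by the residue of the critical Euler characteristic modulo the $d_i$. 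Once this balancing is achieved, the remainder is routine bookkeeping of degrees and genera together with the standard semicontinuity and generality arguments.
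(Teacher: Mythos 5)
Your reduction to the critical degrees is fine (the paper does the same, via Lemma \ref{nn1} and Remark \ref{nn3}), but the device you name as the ``technical heart'' of the induction step cannot work. You propose to balance the Horace recursion by degenerating one or two curves so that they meet $H$ with higher-order contact, ``dividing a single curve's numerical contribution between trace and residual.'' For a \emph{reduced} curve $Z$ with no component inside $H$ one has $\Ii_Z:\Ii_H=\Ii_Z$ (the colon ideal of a radical ideal by an element outside all its minimal primes is itself), so $\Res_H(Z)=Z$ whatever the contact order, and by Bezout the trace $Z\cap H$ always has length $\deg Z$; tangency only makes the trace non-reduced, it shifts nothing into the residual. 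Differential Horace genuinely re-proportions contributions only for infinitesimal, zero-dimensional data. The paper's balancing mechanism is therefore different: each curve is degenerated to a nodal union $Y_i\cup T_i$ with $T_i$ an honest curve \emph{contained in} $H$, glued to $Y_i$ at points and then smoothed (Remark \ref{smoo1}, via Hartshorne--Hirschowitz and Sernesi); the relation $\prec$ (Notation \ref{4ma1}, Definition \ref{dd1}) records which splits $(a_i,q_i)\prec(d_i,g_i)$ are realizable; the interpolation theorems of Atanasov--Larson--Yang, Larson, and Larson--Vogt (Theorems \ref{aly1}, \ref{l5}, \ref{lv4}) are what allow $Y_i$ to meet $H$ in \emph{prescribed general} points --- your blanket claim that a general hyperplane section is a general set of points is not a freebie but a deep theorem with restrictions, and where you use it to propagate $h^1$-vanishing upward the paper uses Castelnuovo--Mumford regularity instead (Remark \ref{nn3}); finally the leftover numerical discrepancy, bounded by $2k$ (Lemmas \ref{ma2}, \ref{qma2}), is absorbed by genuinely zero-dimensional differential data: arrows and planar $2$-points (Remark \ref{ma6.0}, Lemmas \ref{v1}, \ref{rnn12}).

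The second gap is your induction base. You plan to ``install the case of $\PP^3$'' as the bottom rung, but no such statement exists to install: Theorem \ref{i1} as stated is \emph{false} for $n=3$ (Remark \ref{a3}, Lemmas \ref{3ma1} and \ref{3ma2}; e.g.\ a general union of two elliptic quartics, Example \ref{3ma2.1}, fails maximal rank). The actual base of the induction on $n$ is $n=4$, and it is the hardest part of the paper precisely because the traces live in $\PP^3$: one must catalogue the exceptional $\PP^3$ configurations and then arrange the $n=4$ construction either to avoid them or to tolerate their bounded failure of maximal rank (cf.\ the proof of Lemma \ref{pk4}, which uses that the exceptional traces have $h^1\le 2$ together with spare points of $Y\cap(H\setminus T)$). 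A proof sketch that treats $\PP^3$ as an unproblematic bottom rung, and whose balancing mechanism is numerically inert, collapses at exactly these two points.
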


In Theorem \ref{i1} `` $X$ general '' means of course that the $s$-ple $(X_1,\dots ,X_s)$ is general in the product of the
$s$ irreducible components of the Hilbert scheme of $\PP^n$ containing $X_1,\dots ,X_s$. When
$s=1$ Theorem
\ref{i1} is true by a particular case of the Maximal Rank Conjecture for non-special curves (\cite{be4,be2,be5,l8}). We may
also quote
\cite{be7}, because the only case with $s=1$ and not covered by
\cite{be7} is the obvious case of rational normal curves of a proper linear subspace of $\PP^n$. For arbitrary $s$ the case
$g_1=\cdots =g_s=0$ of Theorem \ref{i1} is
\cite{be6}. As it is stated Theorem \ref{i1} would fail for $n=3$ and we give a list of the exceptional cases known
to us  (Remark \ref{a3} and Lemmas \ref{3ma1} and \ref{3ma2}), but we have not checked that the list contains all exceptional
cases. We believe that there are only finitely many exceptional cases
$(s;d_1,g_1;\dots ;d_s,g_s)$, even in the Brill-Noether range. All the exceptional cases we found are related to reducible
surfaces. In most exceptional cases the curves are only contained in reducible surfaces, but in one case, Example
\ref{3ma2.1}, the general quartic surface containing the general curve $X$ is irreducible. Even in this case the obvious
reducible quartics containing $X$ suffice to prove that $X$ is exceptional.

The first paper (\cite{hi}) appeared in 1980. In the last 10 years there were several strong (and often optimal) results on
some interpolation problems, i.e. finding a curve with prescribed degree and genus containing a large number of general points
of
$\PP^r$ or of a hyperplane of $\PP^r$ (\cite{a,aly,l3,l4,l5,l6,lv,v}) and these results are essential for the proof of Theorem
\ref{i1}.

\section{Notation}
A \emph{numerical set} or a \emph{numerical set for $\PP^n$}, $n\ge 3$, is an ordered list of integers
$\epsilon =(n;s;d_1,g_1;\dots ;d_s,g_s)$ with $n$ the dimension of the projective space, $g_i\in \NN$ and $d_i>0$ for
all $i$.
An
\emph{admissible numerical set} or an
\emph{admissible numerical set for
$\PP^n$} is a numerical set
$\epsilon = (n;s;d_1,g_1;\dots ;d_s,g_s)$ such that
$s>0$, all
$g_i\in \NN$, $d_i>0$ if $g_i=0$ and $d_i\ge \max \{2g_i-1,g_i+n\}$ if $g_i>0$. For any positive integer $k$
set $w_k(\epsilon):= \sum _{i=1}^{s} (kd_i+1-g_i)$. We say that $\epsilon$ has \emph{critical value $1$} if $g_i=0$ for all $i$
and $w_1(\epsilon) =s+d_1+\cdots +d_s\le n+1$. In all other cases the \emph{critical value} of $\epsilon$ is the first integer
$k\ge 2$ such that $w_k(\epsilon )\le \binom{n+k}{n}$. If $w_k(\epsilon)\le \binom{n+k}{n}$ for some $k\ge 2$, then
$w_{k+1}(\epsilon) \le \binom{n+k+1}{n}$ (Lemma \ref{nn1}).

An \emph{admissible generalized numerical set} or an
\emph{admissible generalized numerical set for
$\PP^n$} is an ordered list
$\epsilon =(n;s;d_1,g_1;\dots ;d_s,g_s)$ such that
$s>0$, all
$g_i$ and
$d_i$ are non-negative integers, $d_i\ge \max \{2g_i-1,g_i+n\}$ if $g_i>0$ and $d_i>0$ for at least one $i$. Thus we allow
$d_i=0$ for some $i$, but only if $g_i=0$ and we do not allow the list with $d_i=g_i=0$ for all $i$. Set $w_k(\epsilon):= 
\sum' (kd_i+1-g_i)$, where $\sum '$ means that we only sum for $i$ such that $(d_i,g_i)\ne (0,0)$. Using this definition of
$w_k(\epsilon)$ we define in the same way the critical value of $\epsilon$.

For any admissible numerical set $\epsilon =(n;s;d_1,g_1;\dots ;d_s,g_s)$ let $Z(\epsilon)$ denote the set of all smooth
curves $X\subset \PP^n$ with $s$ connected components, say $X =X_1\cup \cdots \cup X_s$ with $\deg (X_i)=d_i$,
$p_a(X_i)=g_i$ and $h^1(\Oo _{X_i}(1)) =0$. The latter condition implies that $Z(\epsilon)$ is a smooth and irreducible
subvariety of the Hilbert scheme $\mathrm{Hilb}(\PP^n)$ of $\PP^n$. Let $Z'(\epsilon)$ denote the closure of $Z(\epsilon)$ in
$\mathrm{Hilb}(\PP^n)$. We use $Z(\epsilon)$ and $Z'(\epsilon)$ for admissible generalized numerical sets just
taking
$X_i=\emptyset$ if $(d_i,g_i) = (0,0)$. We often write $Z(H;s;a_1,q_1;\dots ;a_s,q_s)$ and $Z'(H;s;a_1,q_1;\dots ;a_s,q_s)$
instead of $Z(n-1;s;a_1,q_1;\dots ;a_s,q_s)$ and $Z'(n-1;s;a_1,q_1;\dots ;a_s,q_s)$ to emphasize that their elements are
contained in $H$. 

An \emph{arrow of $\PP^m$, $m>0$} $v$ is a degree $2$ connected zero-dimensional scheme $v\subset \PP^m$. The point $v_{\red}$
is the reduction of the arrow
$v$. Note that any arrow in a projective space spans a line.

We work over an algebraically closed field with characteristic $0$.

\section{Preliminaries}
Let $H\subset \PP^n$ be a hyperplane. Let $Z\subset \PP^n$ be a closed subscheme. The residual scheme $\Res_H(Z)$ of $Z$ with
respect to $H$ is the closed subscheme of $\PP^n$ with $\Ii_Z:\Ii_H$ as its ideal sheaf. For all $t\in \ZZ$ there is an exact
sequence
\begin{equation}\label{eqhor1}
0 \to \Ii_{\Res_H(Z)}(t-1) \to \Ii_Z(t)\to \Ii_{Z\cap H,H}(t)\to 0
\end{equation}
Note that \eqref{eqhor1} implies $\chi (\Ii_{\Res_H(Z)}(t-1))+\chi (\Ii_{Z\cap H,H}(t))=\chi(\Ii _Z(t))$.
If $Z$ is a reduced curve, then $\Res_H(Z)$ is the union of the irreducible components of $Z$ not contained in $H$. If $Z$ is
a reduced curve and $\Res_H(Z)$ is transversal to $H$, then $Z\cap H$ is the union $T$ of all irreducible components of $Z$
contained in $H$ and the finite set $\Res_H(Z)\cap (H\setminus T)$. By \eqref{eqhor1} to prove that $h^1(\Ii _Z(t))  =0$ (resp.
$h^0(\Ii_Z(t)) =0$) it is sufficient to prove that $h^1(\Ii _{\Res_H(Z)}(t-1)) =h^1(H,\Ii _{Z\cap H,H}(t))=0$ (resp.
$h^0(\Ii _{\Res_H(Z)}(t-1)) =h^0(H,\Ii _{Z\cap H,H}(t))=0$). The exact sequence \eqref{eqhor1} gives $\chi (\Ii_{Z\cap H,H}(t))
=\chi (\Ii_{T,H}(t)) -
\#(\Res_H(Z)\cap (H\setminus T))$.

We use $3$ key results from \cite{aly,l5,lv}.

\begin{theorem}\label{aly1}
\cite[Corollary 1.3]{aly}). Fix integers $m\ge 3$, $g\ge 0$, $d\ge g+m$ and $x>0$. Let $S\subset \PP^m$ be a general subset
with $\# S=x$. There is a smooth and non-special curve $X\subset \PP^m$ such that $\deg (X)=d$, $p_a(X)=g$ and $S\subset X$ if
and only if
\begin{equation}\label{eqaly1}
(m+1)d +(m-3)(1-g) \ge (m-1)x
\end{equation}
unless $(d,g,m)\in  \{(5,2,3),(7,2,5)\}$. In these two exceptional cases we may/must take $x\le 9$.
\end{theorem}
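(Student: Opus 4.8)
The plan is to read the inequality \eqref{eqaly1} as the statement that the expected dimension of a natural incidence correspondence is nonnegative, and then to show that this expected dimension is actually attained. For a smooth curve $X\subset \PP^m$ of degree $d$ and genus $g$, the normal bundle sequence together with the Euler sequence give $\deg N_X=(m+1)d+2g-2$, hence by Riemann--Roch $\chi(N_X)=\deg N_X+(m-1)(1-g)=(m+1)d+(m-3)(1-g)$, which is exactly the left-hand side of \eqref{eqaly1}. Since requiring a curve to pass through a general point of $\PP^m$ is a codimension $m-1$ condition, the expected dimension of the family of curves of type $(d,g)$ through $x$ general points is $\chi(N_X)-(m-1)x$. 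Thus \eqref{eqaly1} is precisely the inequality $\chi(N_X)\ge (m-1)x$, and the theorem asserts that, away from the two sporadic triples, this numerical count controls the actual existence question.

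For the necessity of \eqref{eqaly1} I would argue by dimension count. In the range $d\ge g+m$, and away from the two sporadic triples, a general curve $X$ of type $(d,g)$ is non-special (i.e. $h^1(\Oo_X(1))=0$) and moreover has $h^1(N_X)=0$, so the relevant component of $\mathrm{Hilb}(\PP^m)$ is smooth of dimension exactly $\chi(N_X)$ at $[X]$. The incidence variety $\{(X,S):S\subset X,\ \#S=x\}$ then has dimension $\chi(N_X)+x$, and if a general $S$ with $\#S=x$ lies on such a curve this variety dominates $(\PP^m)^x$; comparing with $\dim (\PP^m)^x=mx$ forces $\chi(N_X)\ge (m-1)x$, which is \eqref{eqaly1}.

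The substance of the theorem is the sufficiency, and the key reduction is that the existence of a curve of type $(d,g)$ through $x$ general points, with the expected moduli, is equivalent to the \emph{interpolation property} of the normal bundle: for a general such $X$ and general points $p_1,\dots,p_x$ imposed on $X$, the evaluation map $H^0(N_X)\to \bigoplus_{i=1}^{x}N_X|_{p_i}$ has maximal rank. Maximal rank is an open condition on the family of pairs (curve, points), so it suffices to exhibit a single well-chosen degeneration for which it can be verified, and I would run an induction on the triple $(d,g,m)$ and on $x$. The inductive step specializes $X$ to a reducible nodal curve $X'=Y\cup Z$ with $Y,Z$ of smaller degree meeting transversally; the normal sheaf of $X'$ is controlled by the normal bundles of $Y$ and $Z$ together with the standard exact sequences that modify each component's normal bundle by the gluing data at the nodes. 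Distributing the $x$ interpolation conditions between $Y$ and $Z$ reduces the maximal rank statement for $N_{X'}$ to the corresponding statements for the two pieces, which hold by induction. The base of the induction is the genus zero case, where interpolation for the normal bundle of a general rational curve is supplied by \cite{l5,lv}, supplemented by direct analysis of small $(d,g)$.

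The main obstacle is the control of the normal sheaf of the degenerate curve at the nodes: one must choose the specialization and the split of the conditions so that $N_{X'}$ is no more special than the generic normal bundle and so that the gluing contributes the expected amount of positivity, which forces the component normal bundles to be suitably balanced and the nodes to be in general position. This is exactly where the two sporadic triples $(5,2,3)$ and $(7,2,5)$ intervene: here the general curve of type $(d,g)$ lies on a quadric (respectively on a special surface), so its normal bundle is special, $h^1(N_X)>0$, and interpolation fails one step short of the naive bound $\chi(N_X)/(m-1)$, forcing $x\le 9$. Verifying that these are the only defects, and that the chosen degenerations never acquire comparable excess speciality, is the heart of the argument.
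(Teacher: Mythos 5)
A preliminary remark that matters for this comparison: the paper does not prove this statement at all. Theorem \ref{aly1} is quoted verbatim from \cite[Corollary 1.3]{aly}, the Atanasov--Larson--Yang memoir, and is used in the paper as a black box. So there is no internal proof to measure you against; what you have sketched is, in outline, the strategy of \cite{aly} itself. On that score your framing is accurate: $\deg N_X=(m+1)d+2g-2$, Riemann--Roch for the rank-$(m-1)$ bundle $N_X$ gives $\chi(N_X)=(m+1)d+(m-3)(1-g)$, so \eqref{eqaly1} is exactly $\chi(N_X)\ge (m-1)x$; your dimension count for necessity is sound; and reducing sufficiency to maximal rank of the evaluation $H^0(N_X)\to \bigoplus_{i}N_X|_{p_i}$, attacked by degenerating to nodal unions and tracking elementary modifications of the component normal bundles at the nodes, is the actual architecture of \cite{aly}. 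But the inductive core --- which degenerations to use, how to keep the modified bundles balanced, and the elimination of every other potential exception --- is asserted rather than executed, and that is a memoir-length verification, not a step that can be waved through.

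Beyond incompleteness there is one concrete error: your mechanism for the exceptional cases. You assert that for $(5,2,3)$ and $(7,2,5)$ the general curve has $h^1(N_X)>0$. That is false: for any smooth curve with $h^1(\Oo _X(1))=0$ (automatic here, since $d\ge 2g-1$ in both cases) the restricted Euler sequence $0\to \Oo _X\to \Oo _X(1)^{\oplus (m+1)}\to T_{\PP^m}|_X\to 0$ gives $h^1(T_{\PP^m}|_X)=0$, and the surjection $T_{\PP^m}|_X\onto N_X$ then forces $h^1(N_X)=0$ for \emph{every} non-special curve, exceptional triples included. Note also that if $h^1(N_X)$ were nonzero there, your own necessity argument, which needs $h^1(N_X)=0$ for the dimension count, would collapse in exactly those cases. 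The true obstruction is that interpolation is strictly stronger than non-speciality of $N_X$: for $(5,2,3)$ one has $h^0(\Oo _X(2))=9<10=h^0(\Oo _{\PP^3}(2))$, so every degree-$5$ genus-$2$ curve lies on a quadric; nine general points of $\PP^3$ lie on a quadric but ten do not, whence $x\le 9$ even though $\chi(N_X)=20$ would permit $x=10$. Concretely $N_X$ sits in $0\to N_{X/Q}\to N_X\to \Oo _X(2)\to 0$ with both pieces non-special, yet the evaluation map fails maximal rank for ten points. (Instructively, \cite{aly} has a third interpolation exception, $(6,2,4)$, which is invisible in the corollary you are proving because there $\chi(N_X)=29$ already forces $x\le 9$.) So your closing paragraph must be rewritten with this surface-theoretic obstruction, not cohomological speciality of $N_X$, as the cause of the exceptions.
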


\begin{theorem}\label{l5}(\cite[Theorem 1.4]{l5}) Fix integers $n\ge 3$, $g\ge 0$ and $d$ such that $(n+1)d\ge ng + n(n+1)$.
Fix a hyperplane $H\subset
\PP^n$ and a general set $S\subset  H$ such that
\begin{equation}\label{eql5}
\# S\le \min \big\{d,\frac{(n-1)^2d-(n-2)^2g -(2n^2-5n+12)}{(n-2)^2}\big\}
\end{equation}
Then there is a smooth curve $X\subset \PP^n$ of degree $d$ and genus $g$ with general moduli such that $X$ is transversal to $H$ and
$S\subseteq X\cap H$.
\end{theorem}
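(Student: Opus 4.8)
The plan is to convert the existence statement into the dominance of an incidence projection and then into a cohomological vanishing for the normal bundle. Let $\mathcal{M}$ be the (irreducible) locus of smooth curves of degree $d$ and genus $g$ with general moduli in $\PP^n$ that are transversal to $H$, and put $k=\#S$. Form the incidence variety
\[
I=\{([X],s_1,\dots,s_k)\ :\ [X]\in\mathcal{M},\ s_i\in X\cap H\},
\]
with projections $p_1\colon I\to\mathcal{M}$ (finite, as $X\cap H$ is $d$ reduced points) and $p_2\colon I\to H^k$. Since every member of $\mathcal{M}$ already has general moduli, is smooth and is transversal to $H$, the theorem follows as soon as $p_2$ is dominant: a general $S\in H^k$ is then in the image, i.e. $S\subseteq X\cap H$ for some $X\in\mathcal{M}$. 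Thus I would reduce the entire statement to the dominance of $p_2$.

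Next I would compute $dp_2$ at a general point. Transversality gives $T_{s_i}X\oplus T_{s_i}H=T_{s_i}\PP^n$ and hence a canonical isomorphism $N_{X,s_i}\cong T_{s_i}H$, under which a normal deformation $\sigma\in H^0(N_X)$ displaces the intersection point $s_i$ within $H$ by $\sigma(s_i)$. Therefore $p_2$ is dominant if and only if the evaluation $H^0(N_X)\to\bigoplus_{i=1}^k N_{X,s_i}$ is surjective. Writing $\wt S=\sum_i s_i$ for the induced divisor on $X$ and using
\[
0\to N_X(-\wt S)\to N_X\to\bigoplus_{i=1}^k N_{X,s_i}\to 0,
\]
this surjectivity is equivalent, once $H^1(N_X)=0$ (the normal bundle of a general curve in the stated range is non-special), to the vanishing $H^1(N_X(-\wt S))=0$ for a general $X\in\mathcal{M}$ and a general choice of $k$ among the $d$ points of $X\cap H$.

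The crucial feature, and the origin of the bound \eqref{eql5}, is that $\wt S$ is not a general divisor on $X$ but a general subdivisor of the hyperplane section $X\cap H$, which is special. Twisting $N_X$ down only along $H$-points interpolates between $N_X$ and $N_X(-1)=N_X\otimes\Oo_X(-H)$ through
\[
0\to N_X(-1)\to N_X(-\wt S)\to\bigoplus_{q\in(X\cap H)\setminus\wt S}N_{X,q}\to 0.
\]
A crude count $\chi(N_X(-\wt S))=(n+1)d+(n-3)(1-g)-k(n-1)\ge 0$ only gives $k\le\frac{(n+1)d+(n-3)(1-g)}{n-1}$, which for large $d$ exceeds $d$ and accounts for the term $\min\{d,\cdots\}$. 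The genuinely sharp estimate comes from testing the vanishing after restriction to $H$: the residual points $(X\cap H)\setminus\wt S$ form a configuration in $H\cong\PP^{n-1}$ whose interpolation inside $H$ reintroduces the factors $(n-1)$ and $(n-2)$, and running the residuation \eqref{eqhor1} for the twisted bundle twice — once to descend to $H$ and once for the point-configuration on $H$ — is what should produce the denominator $(n-2)^2$ and the constant $2n^2-5n+12$.

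To prove $H^1(N_X(-\wt S))=0$ I would induct on $(d,g)$, degenerating $X$ to a nodal union $X'\cup Y$ (breaking off, say, a line, a conic, or a rational normal curve meeting $X'$ in a controlled number of nodes), distributing the points of $\wt S$ among the components, and tracking the cohomology through the elementary modifications of $N_X$ at the nodes together with \eqref{eqhor1} for the twisted bundle; the auxiliary curves through prescribed points needed to reattach the pieces and to seed the induction are supplied by Theorem \ref{aly1}, with rational normal curves and a finite list of small $(d,g)$ treated by hand. The main obstacle is sharpness: one must allocate $\wt S$ so that no component violates its own instance of \eqref{eql5}, and verify that the error terms accumulated on $H$ cancel to yield exactly the stated bound rather than a weaker one. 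A secondary difficulty is ensuring that the nodal limit can be smoothed inside $\mathcal{M}$, i.e. to a curve that is still transversal to $H$ and has general moduli.
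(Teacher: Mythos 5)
This statement is not proved in the paper at all: it is quoted verbatim as \cite[Theorem 1.4]{l5} (Larson's \emph{Interpolation with bounded error}), so your proposal has to be measured against Larson's proof, not against anything in this text. Your opening reduction is correct and is exactly the standard framework in which that literature works: the incidence variety with its finite first projection, the identification $N_{X,s_i}\cong T_{s_i}H$ coming from transversality, the equivalence (in characteristic $0$) of dominance of $p_2$ with surjectivity of the evaluation $H^0(N_X)\to \oplus_i N_{X,s_i}$, and hence --- granting $h^1(N_X)=0$ --- with $h^1(N_X(-\wt S))=0$ for $\wt S$ a general subdivisor of the hyperplane section. Your Euler-characteristic bookkeeping $\chi(N_X(-\wt S))=(n+1)d+(n-3)(1-g)-k(n-1)$ is also right, and your observation that $\wt S$ is a \emph{special} divisor (supported on $X\cap H$), so that the naive count cannot give \eqref{eql5}, correctly locates where the difficulty lives. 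Note also that the hypothesis $(n+1)d\ge ng+n(n+1)$ is precisely $\rho(d,g,n)\ge 0$, and that even the input $h^1(N_X)=0$ for a general Brill--Noether curve, which you assert in passing, is itself a nontrivial theorem.

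The genuine gap is that everything after the reduction is a research program, not a proof: the vanishing $h^1(N_X(-\wt S))=0$ under exactly the bound \eqref{eql5} \emph{is} the theorem, and your sketch does not establish it. You do not specify the degenerations $X\rightsquigarrow X'\cup Y$, how the points of $\wt S$ are distributed so that every component satisfies its own inductive hypothesis, what the base cases are, why the elementary modifications of $N_X$ at the nodes interact correctly with the twist by $-\wt S$ (the modified bundle is no longer a twist of $N_{X'}$ by a divisor, which is where the ``bounded error'' bookkeeping of the cited paper enters), or why the nodal limit smooths to a Brill--Noether-general curve transversal to $H$. Your heuristic that two rounds of residuation should produce the denominator $(n-2)^2$ and the constant $2n^2-5n+12$ is plausible but unsubstantiated, and your own closing sentence --- ``verify that the error terms accumulated on $H$ cancel to yield exactly the stated bound'' --- concedes the point: that verification occupies the entirety of Larson's paper. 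As submitted, the proposal proves the correct equivalent reformulation of the statement and then stops where the real work begins.
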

\begin{theorem}\label{lv4}
(\cite[Corollary 2]{lv}) Fix integers $g\ge 0$ and $d\ge g+4$. Let $H\subset \PP^4$ be a hyperplane. Let $S\subset H$ be a general subset with cardinality $d$.
Then there is a smooth and connected curve of degree $d$ and genus $g$ such that $h^1(\Oo _X(1)) =0$ and $S =X\cap H$.
\end{theorem}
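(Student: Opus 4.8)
The plan is to recast the existence statement as the dominance of the hyperplane-section map and then to split the range of $(d,g)$ into a ``large'' part handled by Theorem \ref{l5} and four boundary cases treated directly. Let $\mathcal{H}\subset \mathrm{Hilb}(\PP^4)$ be the locus of smooth connected non-special curves of degree $d$ and genus $g$; for $d\ge g+4$ these are linearly normal and satisfy $h^1(N_X)=0$, so $\mathcal{H}$ is smooth with tangent space $H^0(N_X)$. Consider the rational map $\tau\colon \mathcal{H}\dashrightarrow \op{Sym}^d(H)$, $X\mapsto X\cap H$, defined where $X$ is transversal to $H$. Since $X\cap H$ is the divisor class $\Oo_X(1)$, the restriction sequence $0\to N_X(-1)\to N_X\to N_X|_{X\cap H}\to 0$ identifies $d\tau_X$ with the evaluation map $H^0(N_X)\to H^0(N_X|_{X\cap H})$, which is surjective as soon as $H^1(N_X(-1))=0$. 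As we work in characteristic $0$, surjectivity of $d\tau$ at one point forces $\tau$ to be dominant, and then for general $S$ any $X\in\tau^{-1}(S)$ is a smooth connected non-special curve with $X\cap H=S$; because $\deg X=d=\#S$ and $S$ is reduced, the intersection is automatically transversal. Thus it suffices, for each admissible $(d,g)$, to exhibit one curve in $\mathcal{H}$ (or its closure) with $H^1(N_X(-1))=0$.

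For the main range I would simply invoke Theorem \ref{l5} with $n=4$ and $\#S=d$, which realizes dominance without touching $N_X(-1)$. Its numerical constraint becomes $d\le\min\{d,\tfrac{9d-4g-24}{4}\}$, i.e. $5d\ge 4g+24$, which also subsumes the standing hypothesis $5d\ge 4g+20$. For such $d$ the theorem produces a general-moduli (hence connected) curve $X$, transversal to $H$, with $S\subseteq X\cap H$ and $\#S=d=\deg X$, forcing $S=X\cap H$; moreover $h^1(\Oo_X(1))=0$ since $d\ge g+4>g$ makes the embedding line bundle non-special. Combined with $d\ge g+4$, the inequality $d\ge\tfrac{4g+24}{5}$ fails only when $g\le 3$ and $d=g+4$, so this settles Theorem \ref{lv4} for all $(d,g)$ except the four boundary cases $(4,0),(5,1),(6,2),(7,3)$.

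For these four cases $X$ is embedded by a complete non-special $\mathfrak{g}^4_{g+4}$, and I would verify $H^1(N_X(-1))=0$ by a slope argument. The bundle $N_X(-1)$ has rank $3$ and degree $2d+2g-2$, hence slope $(2d+2g-2)/3$; the inequality $d\ge 2g-1$ — valid because $g+4\ge 2g-1$ for $g\le 5$ — gives slope $>2g-2=\deg K_X$. By Serre duality $h^1(N_X(-1))=h^0\!\big(N_X^\vee(1)\otimes K_X\big)$, and $N_X^\vee(1)\otimes K_X$ then has negative slope, so if $N_X$ is semistable this vanishes and Paragraph one applies. Semistability of $N_X$ is classical for $g\le 1$ (the normal bundle of a rational or elliptic normal curve is balanced), and for $g=2,3$ I would instead degenerate $X$ to a reducible nodal curve of the same degree and genus whose twisted normal sheaf has vanishing $H^1$, concluding by semicontinuity.

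The main obstacle is exactly this vanishing $H^1(N_X(-1))=0$ in the boundary cases: the slope bound only delivers it under (semi)stability of $N_X$, which is a delicate property once $g>1$. The degeneration route for $g=2,3$ carries the real technical burden — one must choose a limit lying in the closure of $\mathcal{H}$ that smooths while preserving degree, genus and non-speciality, control the twisted normal sheaf across the nodes through the usual exact sequences, and check that transversality to $H$ and the semicontinuity of $h^1(N(-1))$ survive the specialization. This is precisely the normal-bundle bookkeeping that the sharp bound of Theorem \ref{l5} is engineered to bypass over the bulk of the range, which is why I would lean on \ref{l5} there and reserve the hand computation for the finitely many extremal pairs.
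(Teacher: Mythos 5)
Two preliminary remarks on the comparison itself: the paper does not prove this statement at all --- Theorem \ref{lv4} is imported verbatim as \cite[Corollary 2]{lv} --- so there is no internal proof to measure you against; and the skeleton of your proposal (reduce interpolation through points of $H$ to the vanishing $H^1(N_X(-1))=0$ via $0\to N_X(-1)\to N_X\to N_X|_{X\cap H}\to 0$, handle the bulk of the range by a quantitative interpolation theorem, and treat the extremal pairs by hand) is exactly the shape of the Larson--Vogt argument you would be re-deriving. Within that skeleton, your reduction to dominance of the hyperplane-section map is correct in characteristic $0$, and your arithmetic with Theorem \ref{l5} is right: for $n=4$ and $\#S=d$ the constraint \eqref{eql5} becomes $5d\ge 4g+24$, which together with $d\ge g+4$ leaves precisely $(d,g)\in\{(4,0),(5,1),(6,2),(7,3)\}$. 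One loose end even in the main range: Theorem \ref{l5} does not assert $h^1(\Oo_X(1))=0$; you need the (easy, but missing) remark that the incidence correspondence dominates its source, so over a general $S$ a general curve of the fiber is general in its component, where non-speciality is an open condition that holds because a general line bundle of degree $d\ge g$ is non-special. Your one-line justification ``$d\ge g+4>g$ makes the embedding line bundle non-special'' is not by itself valid, since the embedding series of the particular curve produced need not be general.

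The genuine gap is in the boundary cases. For $(4,0)$ the identification $N_X\cong\Oo_{\PP^1}(6)^{\oplus 3}$ closes the case, and for $(5,1)$ your slope argument works \emph{granted} semistability of the normal bundle of the elliptic normal quintic --- a true but nontrivial input that you label ``classical'' without a reference, and which your slope computation cannot replace. But for $(6,2)$ and $(7,3)$ you have not given a proof: no degenerate curve is exhibited, and the semicontinuity step you invoke is precisely where the work lies. For a nodal limit $C\cup D$ the relevant sheaf is not a naive restriction: $N_{C\cup D}|_C$ is $N_C$ twisted up at the nodes, so one must prove vanishing for modified bundles (in the style of the Hartshorne--Hirschowitz sequences or the interpolation machinery of \cite{aly}), verify that the chosen nodal curve lies in the closure $Z'(4;1;d,g)$ of the smooth non-special component (smoothability in the sense of Remark \ref{smoo1}), and keep transversality to $H$ through the specialization. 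Since these two pairs are exactly the base cases that \cite[Corollary 2]{lv} was written to settle, the proposal as it stands re-derives the easy bulk of the statement from Theorem \ref{l5} and leaves its hard kernel open; as a self-contained proof it is incomplete, though as a reduction of Theorem \ref{lv4} to two explicit finite cases it is sound.
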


\begin{remark}\label{aly10}
Fix integers $m=n-1\ge 4$, $g\ge 0$, $d\ge \max\{g+n-1,2g-1\}$ and $x>0$. Let $S\subset \PP^m$ be a general subset
with $\# S=x$. Note that $\lfloor (nd +(n-3)(1-(d+1)/2)/(n-2)\rfloor =\lfloor (nd+n-3)(2n-4)\rfloor \ge d/2$.
By Theorem \ref{aly1} there is a smooth and
non-special curve
$X\subset
\PP^m$ such that
$\deg (X)=d$,
$p_a(X)=g$ and
$S\subset X$ if $\#S \ge d/2$.
\end{remark}

\begin{remark}\label{l5.2}
Take $d, g, x$ as in Theorem \ref{aly1}. Assume $d\ge 2g-1$. We get that all $x\le \frac{d(m+5)+m-3}{2m-2}$ are allowed and in
particular we may take $x = \lceil d/2\rceil$.
\end{remark}

\begin{remark}\label{l5+}
Note that when $g=0$ in Theorem \ref{l5} we may take $\#S =d$ and that for $m=3$ in \eqref{eqaly1} the inequality
\eqref{eqaly1} is equivalent to $x\le 2d$. If we take $g=0$ and an integer $d$ such that $1\le d<m$ we take as $X$ a general
rational normal curve of a general $d$-dimensional linear subspace of $\PP^m$. Thus in this case we may take as
$x$ any integer $\le d+1$. In particular for $m=3$ and $d=2$ we may take
$x\le 3$, but not $x=4$; in this case to get a degree $2$ curve passing through $4$ general points of $\PP^3$ we will take two
disjoint lines. For the same reason in the set-up of Theorem \ref{l5} with $g=0$ if
$1\le d<n$ we may use any
$S$ such that
$\#S \le d$. 
\end{remark}

\begin{remark}\label{lv4.1}
Fix an integer $d\in \{1,2,3\}$. Since a general smooth rational curve $D\subset \PP^4$ is linearly normal in its linear span,
$D\cap H$ is formed by $d$ general points of $H$. Thus the statement of Theorem \ref{lv4} holds also for $(d,g)\in
\{(1,0),(2,0),(3,0)\}$.
\end{remark}

\begin{remark}\label{l5.1}
Take $d, g, n, x$ as in Theorem \ref{l5}, but without the assumption \eqref{eql5}. For $g\le 2$ we may take $x =d$. Assume
$g\ge 3$
and $d\ge 2g-1$. Since $g\le (d+1)/2$, \eqref{eql5} implies that we may take $x\ge d/2 -4$.
\end{remark}

\begin{remark}\label{nn3}
Let $X\subset \PP^n$ be a reduced curve such that $h^1(\Oo _X(1)) =0$. We have $h^1(\Oo _X(t))=0$ for all $t\ge 1$. Since
$\dim X=1$, we have $h^i(\Oo _X(z)) =0$ for all $i\ge 2$ and all $z\in \ZZ$. The exact sequence
$$0\to \Ii _X(t) \to \Oo _{\PP^2}(t)\to \Oo _X(t)\to 0$$gives $h^2(\Ii _X(t)) =0$ for all $t\ge 1$ and $h^i(\Ii _X(z)) =0$
for all $i\ge 3$ and $z\ge -n$. Now also assume $h^1(\Ii _X(k)) =0$ for some integer $k\ge 2$. The Castelnuovo-Mumford's
Lemma gives $h^1(\Ii _X(t)) =0$ for all $t\ge 3$.
\end{remark}

\begin{remark}\label{nn2}
Fix integers $n\ge 4$, $s>0$, $d_i$ and $g_i\ge 0$ such that we defined $Z(n;s;d_1,g_1;\dots ;g_s;d_s)$.
Let $k\ge 2$ be the critical value of the numerical set $(n;s;d_1,g_1;\dots ;g_s;d_s,g_s)$.

\quad (a) Assume $k\ge 3$. By Lemma \ref{nn1} and Remark \ref{nn3} to prove that a general $X\in Z(n;s;d_1,g_1;\dots
;g_s;d_s,g_s)$ has maximal rank it is sufficient to prove that $h^0(\Ii _X(k-1)) =0$ and $h^1(\Ii _X(k)) =0$. Since
$Z(n;s;d_1,g_1;\dots ;d_s,g_s)$ is irreducible, the semicontinuity theorem for cohomology shows that to prove that a general
$X\in Z(n;s;d_1,g_1;\dots ;d_s,g_s)$ satisfies $h^0(\Ii _X(k-1)) =0$ and $h^1(\Ii _X(k)) =0$ it is sufficient to prove the
existence of $X', X''\in Z'(n;s;d_1,g_1;\dots ;d_s,g_s)$ such that $h^0(\Ii _{X'}(k-1)) =0$ and $h^1(\Ii _{X''}(k)) =0$.

\quad (b) Assume $k=2$. By Remark \ref{nn3} to prove that a general $X=X_1\cup \cdots \cup X_s\in Z(n;s;d_1,g_1;\dots ;g_s;d_s)$
has maximal rank it is sufficient to prove that $h^1(\Ii _X(2)) =0$ and that either $h^0(\Ii _X(1)) =0$ or $h^1(\Ii _X(1)) =0$.
Since $d_i\ge g_i+n$ if $g_i>0$, we see that $h^0(\Ii _X(1)) =0$, even if $g_i=0$ for all $i$.
\end{remark}

\begin{lemma}\label{nn1}
Fix integers $n\ge 3$, $k\ge 2$, $s>0$, $d_i$ and $g_i$, $1\le i\le s$, such that $d_i>g_i\ge 0$ for all $i$ and
\begin{equation}\label{eqnn1}
\sum _{i=1}^{s} (kd_i+1-g_i)\le \binom{n+k}{n}.
\end{equation}
Then
$$\sum _{i=1}^{s} ((k+1)d_i+1-g_i)< \binom{n+k+1}{n}.$$
\end{lemma}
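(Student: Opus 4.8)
The plan is to reduce everything to a single bound on the total degree $D:=\sum_{i=1}^{s}d_i$. Writing $G:=\sum_{i=1}^{s}g_i$, the two sides of the inequalities become $\sum_{i=1}^{s}(kd_i+1-g_i)=kD+s-G$ and $\sum_{i=1}^{s}((k+1)d_i+1-g_i)=(k+1)D+s-G$, so the second sum exceeds the first exactly by $D$. Hence it is enough to prove that $D<\binom{n+k}{n-1}$: by Pascal's rule $\binom{n+k+1}{n}=\binom{n+k}{n}+\binom{n+k}{n-1}$, and the hypothesis \eqref{eqnn1} bounds the first sum by $\binom{n+k}{n}$, so adding $D$ stays below $\binom{n+k+1}{n}$.

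First I would extract an upper bound for $D$ from \eqref{eqnn1}. The assumption $d_i>g_i\ge 0$ forces $d_i-g_i\ge 1$ for every $i$, hence $G\le D-s$. Substituting this into $kD+s-G$ gives $kD+s-G\ge (k-1)D+2s$, so \eqref{eqnn1} yields $(k-1)D+2s\le \binom{n+k}{n}$, that is $D\le \frac{1}{k-1}\big(\binom{n+k}{n}-2s\big)$. Since $s\ge 1$ and $k\ge 2$, this already gives the \emph{strict} bound $D<\frac{1}{k-1}\binom{n+k}{n}$, and I would keep the slack $2s$ explicit precisely to secure this strictness.

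The remaining point is to compare $\frac{1}{k-1}\binom{n+k}{n}$ with $\binom{n+k}{n-1}$. Using $\binom{n+k}{n-1}=\frac{n}{k+1}\binom{n+k}{n}$, the inequality $\frac{1}{k-1}\binom{n+k}{n}\le \binom{n+k}{n-1}$ is equivalent to $\frac{1}{k-1}\le \frac{n}{k+1}$, i.e. to $k(n-1)\ge n+1$. As $n\ge 3$ we have $\frac{n+1}{n-1}\le 2\le k$, so this holds. Combined with the strict bound of the previous step, this gives $D<\binom{n+k}{n-1}$, whence $\sum_{i=1}^{s}((k+1)d_i+1-g_i)=(kD+s-G)+D<\binom{n+k}{n}+\binom{n+k}{n-1}=\binom{n+k+1}{n}$.

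The argument is entirely elementary, so there is no real obstacle of substance; the one thing to get right is the bookkeeping of the strict inequality. Note that the final binomial comparison $\frac{1}{k-1}\le\frac{n}{k+1}$ degenerates to an equality exactly when $n=3$ and $k=2$, so strictness cannot come from that step alone: it must be supplied by the slack term $2s$ arising from $s\ge 1$ together with $d_i-g_i\ge 1$. Discarding that slack too early would only yield a non-strict bound, so the main care is to carry it through to the end.
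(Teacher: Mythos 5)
Your proof is correct, and at its core it follows the same route as the paper: both arguments reduce the statement to comparing $D=\sum d_i$ with $\binom{n+k}{n-1}$ (the paper via contradiction, you directly via Pascal's rule), and both use $g_i\le d_i-1$ to turn the hypothesis \eqref{eqnn1} into a bound of the form $(k-1)D\le\binom{n+k}{n}$. The one genuine difference is exactly the point you flag at the end: the paper discards the slack term $2s$, derives only the non-strict inequality $(k-1)\binom{n+k}{n-1}\le\binom{n+k}{n}$, i.e.\ $(k-1)/(k+1)\le 1/n$, and then asserts this is ``false for $k\ge 2$ and $n\ge 3$'' --- but at the boundary case $(n,k)=(3,2)$ it holds with equality, so the paper's contradiction does not actually materialize there. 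Your version, which keeps the $+2s$ (coming from $d_i-g_i\ge 1$ and $s\ge 1$) and uses it to make the bound $D<\frac{1}{k-1}\binom{n+k}{n}$ strict before invoking the binomial comparison, is precisely the repair needed; so your proof is not just correct but is the careful form of the paper's own argument.
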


\begin{proof}
Assume
\begin{equation}\label{eqnn2}
\sum _{i=1}^{s} ((k+1)d_i+1-g_i)\ge \binom{n+k}{n}.
\end{equation}
Subtracting \eqref{eqnn1} from \eqref{eqnn2} we get
\begin{equation}\label{eqnn3} d_1+\cdots +d_s \ge \binom{n+k}{n-1}.
\end{equation}
Since $g_i\le d_1+1$ for all  $i$ from \eqref{eqnn1} and \eqref{eqnn3}
we get $(k-1)\binom{n+k}{n-1} \le \binom{n+k}{n}$, i.e.,
$(n+k)!(k-1)/(n-1)!k!(k+1) \le (n+k)!/n!k!$, i.e. $(k-1)/(k+1)\le 1/n$, which is false for $k\ge 2$ and $n\ge 3$.
\end{proof}

\begin{remark}\label{smoo1}
Let $Y, T\subset \PP^m$, $m\ge 3$, be smooth and irreducible connected curves such that $Y\ne T$, $X:=Y\cup T$ is nodal,
$h^1(\Oo_Y(1)) =h^1(\Oo _T(1)) =0$ and $0 < \#(Y\cap T) \le h^0(\Oo_T(1))$.  A Mayer-Vietoris
exact sequence gives
$h^1(\Oo_X(1)) =0$. The nodal curve $X$ is smoothable (\cite{hh,s}). As in \cite{be2,be4,be5}e apply several times this
observation, starting with as
$T$ a rational normal curve of a hyperplane $H$ of $\PP^m$ and then adding several lines intersecting $X\cup T$
quasi-transversally and at at most $2$ points. Any non-special curve $T'\subset H$ may be dismantled in this way.
\end{remark}

\begin{remark}\label{ma6.0}
Let $A, B\subset \PP^m$ be smooth curves meeting at a unique point, $p$. Assume that $A\cup B$ is nodal. Let $v\subset \PP^m$
be any arrow such that $v_{\red}=\{p\}$ and $v$ is not contained in the plane spanned by the tangent lines of $A$ and $B$ at
$p$. As in \cite[Ex. 2.1.1]{hh0} (case in which $A$ and $B$ are lines) one sees that $A\cup B\cup v$ a flat limit of curves $\{A\cup B_\lambda\}$ with each $B_\lambda$
projectively equivalent to $B$ and with $A\cap B_\lambda =\emptyset$. Thus $A\cup B\cup v$ is smoothable.
\end{remark}

\begin{remark}\label{k=1}
By \cite{be6} and our definition of critical value $1$, Theorem \ref{i1} is true for all $n\ge 4$ and all numerical sets
with critical value $1$.
\end{remark}

\begin{lemma}\label{k+2}
Fix a scheme $A\cup B\subset \PP^n$, $n\ge 3$ with $\dim A\le n-2$, $B$ an integral and non-degenerate curve
and $A$ not intersecting a general secant line $L$ of $B$. Then $h^0(\Ii _{A\cup B\cup L}(2)) =\max \{0,h^0(\Ii _{A\cup
B}(2))-1\}$.
\end{lemma}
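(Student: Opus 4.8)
The plan is to restrict to the line $L$ and read off the effect of adjoining $L$ as a rank computation. Write $Y:=A\cup B$ and $V:=H^0(\Ii_Y(2))$. Since $\dim A\le n-2$ and $A$ is disjoint from the general secant $L$, the scheme-theoretic intersection is $Y\cap L=B\cap L=\{p,q\}$, where $p\ne q$ are the two general points of $B$ joined by $L$ (for the integral non-degenerate curve $B$, which is not a line, a general secant is reduced and meets $B$ transversally in exactly these two points). A quadric $Q\in V$ already vanishes at $p,q$, so its restriction $Q|_L$ is a section of $\Ii_{\{p,q\},L}(2)$. This produces the left-exact sequence
\[0\longrightarrow H^0(\Ii_{Y\cup L}(2))\longrightarrow V\xrightarrow{\ \rho\ }H^0(\Ii_{\{p,q\},L}(2)),\qquad \rho(Q)=Q|_L,\]
whose kernel is exactly the space of quadrics containing $Y$ and $L$, i.e. $H^0(\Ii_{A\cup B\cup L}(2))$. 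Because $\deg_L\Oo_L(2)=2$ and we impose the two conditions of vanishing at $p$ and $q$, the target is $1$-dimensional, so $\op{rank}(\rho)\in\{0,1\}$ and $h^0(\Ii_{A\cup B\cup L}(2))=\dim V-\op{rank}(\rho)$. The whole statement thus reduces to showing that $\rho\ne 0$ precisely when $V\ne 0$.

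The key step, and the only place the geometry of $B$ enters, is the claim that a fixed nonzero $Q\in V$ does not contain the general secant $L=\overline{pq}$. Let $B_Q$ denote the symmetric bilinear form associated with the quadratic form $Q$. Restricting to the line spanned by $p,q$ and using $Q(p)=Q(q)=0$ gives
\[Q(sp+tq)=s^2Q(p)+2st\,B_Q(p,q)+t^2Q(q)=2st\,B_Q(p,q),\]
so $\rho(Q)=0$ (that is, $L\subset Q$) if and only if $B_Q(p,q)=0$; otherwise $\rho(Q)$ is a nonzero multiple of the generator $st$ of the target. It therefore suffices to prove $B_Q(p,q)\ne 0$ for general $p,q\in B$. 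If instead $B_Q$ vanished on $B\times B$, then for each fixed $p\in B$ the linear form $B_Q(p,\cdot)$ would vanish on $B$; since $B$ is non-degenerate this forces $B_Q(p,\cdot)\equiv 0$, i.e. $p\in\Sing(Q)$. As $p$ ranges over $B$ we get $B\subseteq\Sing(Q)$; but $\Sing(Q)$ is a linear subspace, so it would contain the span $\PP^n$ of $B$, giving $Q=0$ (we work in characteristic $0$) — a contradiction. Hence $B_Q(p,q)$ is a nonzero bihomogeneous form on $B\times B$, nonzero at a general pair, so $\rho\ne 0$ for the general secant $L$.

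Putting the two steps together gives the result: if $V=0$ both sides of the asserted equality vanish and it reads $0=\max\{0,-1\}$, while if $V\ne 0$ then $\op{rank}(\rho)=1$ and $h^0(\Ii_{A\cup B\cup L}(2))=\dim V-1=\max\{0,h^0(\Ii_{A\cup B}(2))-1\}$. I expect the only delicate point to be the genericity bookkeeping of $Y\cap L$: one must ensure that for a general choice the secant is a genuine reduced bisecant meeting $B$ transversally in exactly two points and missing $A$, so that the target of $\rho$ is honestly the $1$-dimensional space $H^0(\Ii_{\{p,q\},L}(2))$. Under the hypotheses — $B$ integral and non-degenerate in $\PP^n$ with $n\ge 3$ (hence of degree $\ge n$ and not a line) and $A$ of dimension $\le n-2$ avoided by the general secant — this is routine, and the real content is the one-line polar-form identity above.
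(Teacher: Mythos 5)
Your proof is correct and takes essentially the same route as the paper: after the same dimension count giving $h^0(\Ii_{A\cup B\cup L}(2))\ge h^0(\Ii_{A\cup B}(2))-1$, both arguments fix a nonzero $Q\in |\Ii_{A\cup B}(2)|$ and show that it cannot contain the general secant line, because this would force $Q$ to be singular at a general point of $B$, which is impossible since $\Sing (Q)$ is a proper linear subspace while $B$ spans $\PP^n$. The only difference is how the singularity is produced: the paper uses the Zariski tangent space at the vertex of the cone $C_o(B)\subset Q$, whereas you use the polar bilinear form $B_Q$ --- the same computation in algebraic dress.
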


\begin{proof}
To prove the lemma we may assume $h^0(\Ii _{A\cup
B}(2))>0$. Since $\deg (L\cap B)=2$ and $h^0(\Oo _L(2))=3$, we have $h^0(\Ii _{A\cup B\cup L}(2))\ge h^0(\Ii _{A\cup
B}(2))-1$. Thus it is sufficient to fix $Q\in |\Ii_{A\cup B}(2)|$ and prove that a general secant line of $B$ is not contained
in $Q$. Assume that it is contained in $Q$. Since $B$ spans $\PP^n$, a general $o\in B$ is not contained in the singular
locus of $Q$. By assumption $Q$ contains the cone $C_o(B)$ with vertex $o$ and containing $B$. Since $B$ is non-degenerate, the
Zariski tangent space of
$C_o(B)$ at $o$ is $\PP^n$. Thus $Q$ is singular at $o$, a contradiction.
\end{proof}

\begin{lemma}\label{k=2}
Fix integers $n\ge 4$, $s\ge 1$, $d_i$ and $g_i$, $1\le i\le s$ such that $d_i>0$ if $g_i=0$ and $d_i\ge g_i+n$ if $g_i>0$.
Let $Y=Y_1\cup \cdots \cup Y_s\subset \PP^n$ be a general union of $s$ general non-special curves $Y_i$ of degree $d_i$ and
genus $g_i$.
Then either $h^0(\Ii _Y(2))=0$ or $h^1(\Ii _Y(2))=0$.
\end{lemma}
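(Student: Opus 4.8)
The plan is to split according to the sign of $\chi(\Ii_Y(2))$. Write $N:=\binom{n+2}{n}$ and $w:=\sum_{i=1}^s(2d_i+1-g_i)$. Each $Y_i$ is non-special with $h^1(\Oo_{Y_i}(1))=0$, so $h^0(\Oo_{Y_i}(2))=2d_i+1-g_i$ and, $Y$ being a disjoint union, $\chi(\Ii_Y(2))=N-w$; by Remark~\ref{nn3} we have $h^j(\Ii_Y(2))=0$ for $j\ge 2$, whence $h^0(\Ii_Y(2))-h^1(\Ii_Y(2))=N-w$. It therefore suffices to prove $h^1(\Ii_Y(2))=0$ in the range $w\le N$ and $h^0(\Ii_Y(2))=0$ in the range $w\ge N$. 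Since $Z(\epsilon)$ is irreducible, semicontinuity reduces each statement to producing a single $Y'\in Z'(\epsilon)$ --- a flat limit, allowed to be reducible, nodal, or to carry arrows --- with the required vanishing.

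To build such a $Y'$ I would fix a hyperplane $H\cong\PP^{n-1}$ and specialize so that a chosen sub-union of the components lies in $H$ while the remaining components stay transversal to $H$. For the resulting $Z$, $\Res_H(Z)$ is the union of the transversal components and $Z\cap H$ is the union of the $H$-components with the points where the transversal ones cross $H$. Feeding this into \eqref{eqhor1} at $t=2$,
\[
0\to\Ii_{\Res_H(Z)}(1)\to\Ii_Z(2)\to\Ii_{Z\cap H,H}(2)\to 0,
\]
reduces $h^1(\Ii_Z(2))=0$ (resp. $h^0$) to the two independent vanishings $h^1(\Ii_{\Res_H(Z)}(1))=0$ and $h^1(H,\Ii_{Z\cap H,H}(2))=0$ (resp. the corresponding $h^0$'s). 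The residual is a degree-$1$ problem, governed by the critical-value-$1$ result recalled in Remark~\ref{k=1}; the trace is the same quadric problem one dimension lower, i.e. this Lemma for $n-1$ applied to the $H$-components together with the extra crossing points, so the argument runs by induction on $n$ (bottoming out at $H=\PP^3$). The point of Theorems~\ref{l5} and~\ref{lv4} (and of Theorem~\ref{aly1} via Remarks~\ref{aly10} and~\ref{l5.2}) is precisely that I may prescribe the crossing points to be general in $H$: for any general $S\subset H$ of the allowed size there is a transversal non-special curve of the prescribed degree and genus meeting $H$ exactly along $S$, so that the trace conditions coming from curves and those coming from points stay independent.

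The crux is meeting the two budgets at once. Since $N=\binom{n+1}{n-1}+(n+1)$, in the range $w\le N$ I must arrange $w_1(\Res_H(Z))\le n+1$ (for the degree-$1$ vanishing) while the trace fills at most the $\binom{n+1}{n-1}$-dimensional space of quadrics of $H$; concretely the residual budget should equal $\max\{0,\,w-\binom{n+1}{n-1}\}$, a value between $0$ and $n+1$. When some component has the minimal degree $d_i=g_i+n$ this is clean: make that one transversal (its residual budget is exactly $n+1$) and put everything else in $H$. The difficulty --- and the step I expect to be the main obstacle --- is when every component has $d_i>g_i+n$, so that a single transversal curve contributes more than $n+1$ to the residual and overshoots. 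I would handle this by a differential Horace step: degenerate one component to a curve lying in $H$ union a transversal rational curve of tuned degree (smoothable by Remark~\ref{smoo1}), and, to correct the last unit when no transversal curve has the right residual budget, attach a single arrow transversal to $H$ with reduced point on $H$ (smoothable by Remark~\ref{ma6.0}), which places one reduced point in the trace and one in the residual and so adjusts the residual budget by a single unit. Verifying that these flat limits actually lie in $Z'(\epsilon)$ --- matching degrees and arithmetic genera of the peeled pieces --- and that the associated point configurations remain general is the technical heart.

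The range $w\ge N$ is the mirror image. Now I want the residual to be non-degenerate, so $h^0(\Ii_{\Res_H(Z)}(1))=0$, which holds as soon as the transversal part imposes at least $n+1$ independent linear conditions (automatic for one general non-special curve of degree $\ge g+n$), and I want the trace to impose at least $\binom{n+1}{n-1}$ conditions on quadrics of $H$, again reduced to the lower-dimensional case plus general points furnished by Theorems~\ref{aly1},~\ref{l5},~\ref{lv4}. Here the inequalities point the convenient way, so the tuning is milder; where a quadric stubbornly contains the configuration I would remove it using a secant line as in Lemma~\ref{k+2}, each secant dropping $h^0(\Ii(2))$ by one until it reaches $0$. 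Finally one must track the finitely many low cases excluded in Theorem~\ref{aly1} and the small degrees $d\in\{1,2,3\}$ covered by Remarks~\ref{l5+} and~\ref{lv4.1}, but these affect only boundedly many configurations.
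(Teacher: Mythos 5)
Your reduction of the statement to exhibiting one element of $Z'(\epsilon)$ with the required vanishing, and the Euler characteristic bookkeeping $\chi(\Ii_Y(2))=\binom{n+2}{n}-w$, are both fine, but the Horace induction you build on them has a fatal flaw as stated: the induction on $n$ has no valid base. Your trace problem is a maximal rank statement for quadrics of $H\cong\PP^{n-1}$, which you propose to settle by ``this Lemma for $n-1$,'' with the induction ``bottoming out at $H=\PP^3$.'' But the lemma is \emph{false} for $n=3$ --- this is exactly why its hypothesis reads $n\ge 4$: two general conics $Y_1\cup Y_2\subset \PP^3$ have $h^0(\Ii_{Y_1\cup Y_2}(2))=h^1(\Ii_{Y_1\cup Y_2}(2))=1$ (Remark~\ref{a3}, case (1)), and the paper's own proof of this lemma flags precisely this failure. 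So for $n=4$ the trace vanishing cannot be obtained by citing the lower-dimensional case; you would have to show that the components you place inside $H$, together with the crossing points, avoid every exceptional $\PP^3$ configuration, and your proposal never addresses this. A second gap: even for $n\ge 5$, what you need in $H$ is not the lemma but a stronger statement about (curves in $H$) $\cup$ (crossing points), and the crossing points cannot be assumed general. In your $w\le N$ range the transversal component is forced to have $d_i=g_i+n$, hence $d_i$ crossing points; for $n\ge 5$ and $g_i$ large the bound \eqref{eql5} of Theorem~\ref{l5} is strictly smaller than $d_i$, so not all of these points can be prescribed generally in $H$, and one needs a supplementary independence argument (of the type \cite[Lemma 2.1]{be7}, which the paper invokes for its higher-degree assertions $R(n,k)$) that your sketch does not supply.

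The paper's proof is entirely different and bypasses all of this. It degenerates each $Y_i$ to $E_i\cup(\hbox{$g_i$ secant lines of $E_i$})$, where $E_i$ is a general smooth rational curve of degree $d_i-g_i\ge n$ (this is where the hypothesis $d_i\ge g_i+n$ enters, and it also makes $E_i$ non-degenerate); the case of rational curves is known for $n\ge 4$ by \cite{be6}, so either $h^0(\Ii_E(2))=0$ --- in which case adding lines changes nothing --- or $h^1(\Ii_E(2))=0$, and then Lemma~\ref{k+2} is applied once per secant line: each general secant line drops $h^0(\Ii_{\cdot}(2))$ by exactly one while $\chi$ drops by one, so at every step either $h^1$ stays $0$ or $h^0$ reaches $0$; semicontinuity concludes. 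Note that your secant-line fallback in the $w\ge N$ range is this argument in miniature: if you run it from the start, on the rational degeneration and in both ranges, you obtain a complete proof and can discard the hyperplane induction altogether.
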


\begin{proof}
Let $E\subset \PP^n$ be a general union of $s$ general smooth rational curves of degree $d_i-g_i$. Since $n\ge 4$, the lemma is
true if
$g_i=0$ for all
$i$ (it is false for
$n=3$,
$s=2$,
$g_1=g_2=0$ and
$d_1=d_2=2$). Thus it is true for $E$, i.e. either $h^0(\Ii _E(2))=0$ or $h^1(\Ii _E(2))=0$.
We may obtain $Y$ as a flat deformation of the union $E\cup T$ of $E$ and $g_1+\dots +g_s$ disjoint lines, exactly $g_i$ of
them secant to
$E$. By the semicontinuity theorem for cohomology it is sufficient to prove that either $h^0(\Ii_{E\cup T}(2)) =0$
or $h^0(\Ii _{E\cup T}(2))=0$. The lemma is obvious if $h^0(\Ii_E(2))=0$. If $h^1(\Ii_E(2))=0$ we apply $g_1+\cdots+g_s$ times
Lemma
\ref{k+2}.
\end{proof}

\begin{lemma}\label{rnn0}
Take a vector space $W\subseteq H^0(\Oo _{\PP^n}(2))$ and an integral curve $T\subset \PP^n$ contained in the base locus of
$W$. For any scheme
$E\subset
\PP^n$ set $W(-E):= H^0(\Ii _E(2))\cap W$. For a general
$o\in T$ and a general line $L\subset \PP^n$ such that $o\in L$ we have $\dim W(-L) = \max \{0,\dim W-2\}$, unless all $Q\in
|W|$ are cones with vertex containing $T$.
\end{lemma}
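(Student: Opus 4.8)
The plan is to turn the statement about $\dim W(-L)$ into a rank computation for a $2\times(\dim W)$ matrix of linear functionals, and then to show that this rank can drop only when the quadrics in $|W|$ degenerate along $T$.

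First I would exploit that $T$ lies in the base locus: every $q\in W$ vanishes at the general point $o\in T$. Writing $L=\{[\lambda o+\mu p]\}$ for a second vector $p$ spanning $L$, and letting $B$ denote the symmetric bilinear form attached to $q$, one has $q(\lambda o+\mu p)=\lambda^2 q(o)+2\lambda\mu B(o,p)+\mu^2 q(p)=2\lambda\mu B(o,p)+\mu^2 q(p)$ because $q(o)=0$. Hence the restriction map $W\to H^0(\Oo_L(2))$ lands in the $2$-dimensional subspace $V_o$ of sections vanishing at $o$, and in the basis $\{\lambda\mu,\mu^2\}$ of $V_o$ it is $q\mapsto (2B(o,p),q(p))$. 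Consequently $\dim W(-L)=\dim W-r$, where $r\le \min\{2,\dim W\}$ is the rank of this map, so $\dim W(-L)\ge \max\{0,\dim W-2\}$ always, with equality exactly when $r=\min\{2,\dim W\}$; that is, when the two linear functionals $\phi_1\colon q\mapsto B(o,p)$ and $\phi_2\colon q\mapsto q(p)$ on $W$ are independent (and $\phi_2\ne 0$ in the case $\dim W=1$).

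For general $p$ the functional $\phi_2$ is nonzero, since $p$ then lies outside the (proper) base locus of $W$; this disposes of $\dim W\le 1$. So I would assume $\dim W\ge 2$ and argue by contradiction: suppose $r\le 1$ for general $o\in T$ and general $p$. As $\phi_2\ne 0$ this means $\phi_1\in\langle\phi_2\rangle$, equivalently the polynomial identity $B(o,p)\,q'(p)=B'(o,p)\,q(p)$ holds for all $q,q'\in W$ and general $o\in T$ (here $B'$ is the form of $q'$). The exceptional case is precisely the assertion that $B(o,\cdot)\equiv 0$ for all $q\in W$ and all $o\in T$, i.e. that $T$ lies in the vertex of every $Q\in|W|$; this is a closed condition on $o$, so if we are not in the exceptional case there is, for general $o\in T$, a quadric $q_0\in W$ with $\beta:=B_0(o,\cdot)\ne 0$. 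Feeding $q'=q_0$ into the identity gives $B(o,p)\,q_0(p)=\beta(p)\,q(p)$ for every $q\in W$, and since $\beta$ is an irreducible linear form, either $\beta\mid B(o,\cdot)$ for all $q$ (forcing $q\propto q_0$, hence $\dim W=1$, a contradiction) or $\beta\mid q_0$, say $q_0=\beta m$ with $m$ linear.

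The remaining case $q_0=\beta m$ is where I expect the real work to lie. Cancelling $\beta$ from $B(o,p)\,q_0(p)=\beta(p)\,q(p)$ yields $q=B(o,\cdot)\,m$ for every $q\in W$, so $m$ is a common linear factor and each $Q\in|W|$ is a reducible quadric containing the hyperplane $\{m=0\}$. Using the symmetry of $B_0$, the relation $B_0(o,\cdot)=\beta$, and $q_0(o)=B_0(o,o)=0$, one identifies the two factors of $q_0=\ell_1\ell_2$ and finds, since $T\subseteq\{q_0=0\}$ and $T$ is integral, that $T$ lies inside one of the two factor hyperplanes. The delicate point — and the main obstacle — is to convert ``$T$ lies in a factor and $|W|$ has a common hyperplane'' into the clean conclusion that the common vertex of the cones contains all of $T$: this is exactly the degenerate, fixed--hyperplane behaviour, and controlling it (showing it either reproduces the stated exceptional case, or is excluded because it would force the integral curve $T$ into a hyperplane, so that no genuine rank drop survives) is what closes the argument.
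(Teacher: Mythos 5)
You have not given a proof: the lemma's entire content is concentrated in the case you reach at the very end (every $q\in W$ acquires the common linear factor $m$, with $q=B_q(o,\cdot)\,m$), and there you stop, naming the difficulty instead of resolving it. Everything up to that point is correct, and it is a genuinely different route from the paper's: you work with the two functionals $\phi_1,\phi_2$ and polynomial factorization, whereas the paper never writes a bilinear form — it fixes a general $p$, notes that failure of the rank count forces every $Q\in|W(-p)|$ to contain the cone $C_p(T)$ with vertex $p$, and then moves $p$ inside a fixed $Q$ (semicontinuity of $p\mapsto \dim W(-p)$) to conclude that $T\subseteq \Sing (Q)$. But a correct reduction plus an acknowledged hole is not a proof, and here the hole is where the lemma lives.

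Worse, the hole cannot be filled in the way you hope, because in your leftover case the conclusion, read literally ($\Sing(Q)\supseteq T$ for every $Q\in|W|$), is actually false. Take $n=3$, $W=\langle xz,\,xw\rangle$, $T=\{z=w=0\}$. Then $T$ is an integral curve inside the base locus $\{x=0\}\cup T$ of $W$. For a general $o=[o_0:o_1:0:0]\in T$ and a general line $L$ spanned by $o$ and $p$, the quadric $x(\alpha z+\beta w)$ restricts on $L$ to $\mu(\lambda o_0+\mu p_0)(\alpha p_2+\beta p_3)$, so $W(-L)$ is cut out by the single condition $\alpha p_2+\beta p_3=0$ and $\dim W(-L)=1=\dim W-1>\max\{0,\dim W-2\}$; yet $Q=\{xz=0\}\in|W|$ has vertex $\{x=z=0\}$, which does not contain $T$. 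This example sits exactly in your final case ($m=x$, $q_0=xz$, $\beta=z$, and $T$ inside the factor $\{\beta=0\}$ rather than $\{m=0\}$), so no argument can convert that case into the stated exceptional conclusion; your ``main obstacle'' is a counterexample region, not a removable difficulty. (It also pinpoints where the paper's own argument is fragile: when the component $\{m=0\}$ of $Q$ lies inside the base locus of $W$, the points $p'\in Q$ to which the failure condition applies sweep out only the other component, so one only learns that $B_Q(\cdot,t)$ vanishes on that component, not on all of $Q$.) The constructive way out is to prove the weaker dichotomy that your reduction does establish in one more line: in the leftover case every $q\in W$ is divisible by $m$, hence every $Q\in|W|$ is reducible or a double hyperplane, hence a singular quadric (a cone) containing $T$. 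If that weaker exceptional clause suffices for the application in Lemma \ref{rnn1}, your argument closes with it; what you cannot do is leave the case open, or claim the strong form, which the example above refutes.
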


\begin{proof}
Since $W(-o) =W$, we have $\dim W(-L)\ge \dim W-2$. Since $L$ contains a general point of $\PP^n$, we have $\dim W(-L) \ge \max
\{0,\dim W-1\}$. Thus we may assume $\dim W\ge 2$. Foix a general $p\in \PP^n$. Thus $\dim W(-p) =\dim W-1$. Fix $Q\in
W(p-p)$. We are done, unless every line containing $p$ and intersecting $T$ is contained in $Q$, i.e. unless $Q$ contains the
cone $C_p(T)$ with vertex $p$ and base $T$. Take $p'\in Q$ near $p$. We still have $\dim W(-p') =\dim W(-p)$ by semicontinuity.
Thus $\dim W(-L) = \max \{0,\dim W-2\}$, unless $Q$ is a cone with vertex containing $T$. Since we may take as $Q$ any element
of $|W|$ containing a general $p\in \PP^n$, we conclude.
\end{proof}

\begin{lemma}\label{rplan1}
Let $W\subseteq H^0(\Oo _{\PP^n}(k))$ be a linear subspace. Fix a general plane $E\subset \PP^n$ and a general $o\in E$. Let
$(2o,E)$ denote the first infinitesimal neighborhood $(2o,E)$ of $o$ in $E$. We have $\dim W(-(2o,E)) =\max \{0,\dim W-3\}$,
unless the rational map
$\phi$  induced by $|W|$ sends $\PP^n$ onto a curve.
\end{lemma}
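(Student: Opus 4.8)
The plan is to read off $\dim W(-(2o,E))$ from the rank of the evaluation map and to relate the failure of maximal rank to the dimension of the image of $\phi$. Write $\mathrm{ev}\colon W\to H^0(\Oo_{(2o,E)}(k))$ for the restriction map, so that $W(-(2o,E))=\ker(\mathrm{ev})$. Since $(2o,E)$ has length $3$ the target has dimension $3$, whence $\dim W(-(2o,E))\ge \max\{0,\dim W-3\}$ automatically; thus it suffices to prove that $\mathrm{ev}$ has rank $\min\{3,\dim W\}$. The case $W=0$ is trivial, so assume $W\ne 0$; then the base locus of $W$ is a proper closed subset and the general point $o$ lies off it, so some $s_0\in W$ has $s_0(o)\ne 0$.

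First I would compute $\op{rank}(\mathrm{ev})$ in terms of $\phi$. Trivializing $\Oo_{\PP^n}(k)$ near $o$ by $s_0$ identifies $\mathrm{ev}$ with the map carrying $s\in W$ to the $1$-jet along $E$ at $o$ of the rational function $s/s_0$. Choosing a basis $s_0,\dots,s_N$ of $W$ and local coordinates $u_1,u_2$ on $E$ at $o$, the matrix of $\mathrm{ev}$ has the row $(1,0,0)$ coming from $s_0$ and the rows $(h_i(o),\partial_{u_1}h_i(o),\partial_{u_2}h_i(o))$ coming from $h_i:=s_i/s_0$, $1\le i\le N$. The first column gives one pivot, and the rank of the remaining two columns equals $\op{rank}(d(\phi|_E)_o)=\op{rank}(d\phi_o|_{T_oE})$. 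Hence
$$\op{rank}(\mathrm{ev})=1+\op{rank}(d\phi_o|_{T_oE}).$$

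Let $\rho:=\dim\overline{\phi(\PP^n)}$, so that $\op{rank}(d\phi_o)=\rho$ at the general point $o$ (generic smoothness in characteristic $0$), and $\dim W=N+1\ge \rho+1$ because $\overline{\phi(\PP^n)}$ spans $\PP(W^*)=\PP^N$. Now assume $\phi$ does not map $\PP^n$ onto a curve, i.e. $\rho\ne 1$. If $\rho=0$ then $N=0$, $\dim W=1$ and $\op{rank}(\mathrm{ev})=1=\min\{3,\dim W\}$. If $\rho\ge 2$ then $\ker d\phi_o$ has dimension $n-\rho$; since choosing $E$ general and $o\in E$ general makes $o$ a general point of $\PP^n$ and $T_oE$ a general $2$-plane of $T_o\PP^n$, and since $2+(n-\rho)\le n$, the general plane $T_oE$ meets $\ker d\phi_o$ only at $0$. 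Thus $d\phi_o|_{T_oE}$ is injective, $\op{rank}(\mathrm{ev})=3$, and as $\dim W\ge 3$ we get $\dim W(-(2o,E))=\dim W-3=\max\{0,\dim W-3\}$, as wanted.

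The main obstacle is the second step: making rigorous the identification $\op{rank}(\mathrm{ev})=1+\op{rank}(d\phi_o|_{T_oE})$, i.e. checking that trivializing by $s_0$ and extracting the $1$-jet along $E$ genuinely computes the restricted differential and that the $s_0$-row decouples from the derivative rows. I would also make precise, via the incidence variety $\{(o,E):o\in E\}$ (irreducible and dominating $\PP^n$), that ``$E$ general and $o\in E$ general'' produces a general flag $(o,T_oE)$, so that putting $T_oE$ in general position relative to $\ker d\phi_o$ is legitimate; the remaining assertions are routine linear algebra and dimension counting.
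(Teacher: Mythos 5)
Your proof is correct and takes essentially the same approach as the paper's: both identify the drop $\dim W-\dim W(-(2o,E))$ with $1+\op{rank}(d\phi_o|_{T_oE})$, use generality of the flag $(o,T_oE)$ to compare this restricted rank with the full rank of $d\phi_o$, and invoke the characteristic-$0$ fact that the rank of $d\phi$ at a general point equals the dimension of $\overline{\phi(\PP^n)}$. Yours is merely the contrapositive formulation, with the jet-matrix computation, the incidence-variety justification of flag generality, and the case $\dim\overline{\phi(\PP^n)}=0$ spelled out explicitly where the paper is terse.
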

\begin{proof}
We have $\deg ((2o,E)) =3$ and hence $\dim W(-(2o,E)) \ge \max \{0,\dim W-3\}$. Thus we may assume $\dim W\ge 3$. Fix a
general $o\in \PP^n$. The integer $\dim W -\dim W(-2o)$ is the rank of the differential of $\phi$ at $o$. This rank is $\le 1$
if
$\dim W(-(2o,E)) \ge \dim W-2$ for a general plane $E$ containing $o$. In characteristic $0$ the rank of the differential of
$\phi$ at a general point is the dimension of the image.
\end{proof}

\begin{remark}\label{rplan2}
Let $W\subseteq H^0(\Oo _{\PP^n}(k))$ be a linear subspace. Fix a general line $L\subset \PP^n$ and a general $o\in E$.   Let
$(2o,L)$ denote the first infinitesimal neighborhood $(2o,L)$ of $o$ in $L$. The proof of Lemma \ref{rplan1} gives
$\dim W(-(2o,L)) =\max \{0,\dim W-2\}$
\end{remark}

\section{Space curves}\label{S3}
To prove the case $n=4$ of Theorem \ref{i1} we need several results for curves  a hyperplane of $\PP^4$, i.e. several results
for curves in $\PP^3$. We will use in an essential way \cite{b}, but we also need reducible connected curves which may be
chopped into connected subcurves with prescribed degrees. As in \cite{b,be6} we use connected curves of arithmetic genus $0$
with lines as irreducible components. We also write several exceptional cases to the extension of Theorem \ref{i1} to the case
$n=3$ (Remark \ref{a3} and Lemmas \ref{3ma1} and \ref{3ma2}), but we do not claim to have the full list of the exceptional
cases. We say that
$(s;d_1,g_1;\dots ;d_s,g_g)$ is \emph{admissible for $\PP^3$} if $d_i>0$ for all $i$ and $d_i\ge g_i+3$ if $g_i>0$.

A degree
$d$
\emph{tree}
$T\subset
\PP^r$,
$r\ge 3$, is a connected nodal curve of degree
$d$ and arithmetic genus
$0$ whose irreducible components are lines. A \emph{forest} in $\PP^r$ is a union of finitely many disjoint trees. For all
positive integers $s$,
$d$ and $d_i$,
$1\le i\le s$, let $T(r;d)$ denote the set of all degree $d$ trees of $\PP^r$ and $T(r;s;d_1,\dots ,d_s)$ the set of all
forests in $\PP^r$ with $s$ connected components of degree $d_1,\dots ,d_s$. The sets $T(r;d)$ and $T(r;s;d_1,\dots,d_s)$ are
smooth quasi-projective varieties. If $d\ge 4$ the set $T(r;d)$ is not irreducible, but we may describe its irreducible
components, which are also its connected components,in the following way. Fix an integer $d\ge 2$ and let $T\subset \PP^r$ be a
degree $d$ tree. It is easy to see the existence of an ordering $L_1,\dots ,L_d$ of the irreducible components of $T$ such that
for all $i=1,\dots ,d$ the curve $\cup _{1\le j\le i} L_j$ is connected. We  say that any such ordering is admissible. 
Let $T(r;d)'$ be the set of pairs $(T,\leq)$, where $T\in T(r;d)$ and $\leq $ is an admissible ordering of $T$. Fix
one such ordering. Since
$T$ is nodal, connected and with arithmetic genus $0$, for each
$i\in \{2,\dots,d\}$ there is a unique $\tau(i)\in \{1,\dots ,i-1\}$ such that $L_i\cap L_{\tau(i)}\ne \emptyset$. Thus the
admissible ordering of $T$ induces a map $\tau: \{2,\dots ,d\}\to \{1,\dots ,d-1\}$ such that $\tau(i)<i$ for all $i$.
As in \cite{b,be6} we say that $\tau$ is the \emph{type} of $(T,\leq)$. Let $\tau: \{2,\dots ,d\}\to \{1,\dots ,d-1\}$ such
that $\tau(i)<i$ for all $i$. It is obvious how to construct a pair $(T,\leq)\in T(r;d)'$ such that $\tau$ is the type
of $(T,\leq)$. For any function $\tau: \{2,\dots ,d\}\to \{1,\dots ,d-1\}$ such that $\tau(i)<i$ for all $i$ let
$T(r;d;\tau)$ denote the set of all $T\in T(r;d)$ with an admissible ordering $\leq$ such that $(T,\leq)$ has type $\tau$. It
is easy to see that the sets $T(r;d;\tau)$ are the irreducible components of $T(r;d)$, except that we may have
$T(r;d;\tau)=T(r;d;\tau ')$ for some $\tau '\ne \tau$. Note that either $T(r;d;\tau)=T(r;d;\tau ')$ or
$T(r;d;\tau)\cap T(r;d;\tau ') =\emptyset$. Thus the sets $T(r;d;\tau)$ are the connected components of $T(r;d)$, too.
A degree $d$ tree $T\subset \PP^r$ is called a \emph{bamboo} if either $d=1$ or $d\ge 2$ and there is an admissible ordering
$\leq$ of $T$ with type $\tau(i)=i-1$ for all $i$. Thus a degree $d\ge 2$ tree is a bamboo if and only if there is an ordering
$L_1,\dots ,L_d$ of the irreducible components of $T$ such that $L_i\cap L_j\ne \emptyset$ if and only if $|i-j|\le 1$. If
$d\le 3$ every degree $d$ tree is a bamboo. Let $B(r;d)$ denote the set of all degree $d$ bamboos contained in $\PP^r$.
For all integers $r\ge 3$, $s>0$ and $d_i>0$, $1\le i \le s$, let $B(r;s;d_1,\dots ,d_s)$ denote the subsets of
$T(r;s;d_1,\dots ;d_s)$ formed by all forests whose connected components are bamboos.

\begin{remark}\label{t1}
Fix a hyperplane $H\subset \PP^r$, $r\ge 3$, a positive integer $d$, a type $\tau: \{2,\dots ,d\}\to \{1,\dots ,d-1\}$ for
degree $d$ trees and a finite set
$S\subset H$ such that
$\#S =d$. Since any two points of $\PP^r$ are collinear, it is easy to show the existence of $T\in T(r;d;\tau)$ such that
$T\cap H=S$. Note that any such $T$ intersects transversally $H$.
\end{remark}

\begin{remark}\label{t2}
Fix a hyperplane $H\subset \PP^r$, $r\ge 3$, a positive integer $d$, a type $\tau: \{2,\dots ,d\}\to \{1,\dots ,d-1\}$ for
degree $d$ trees and a finite set
$A\subset H$ such that $\#A\le \lfloor d/2\rfloor$. Since any two points of $\PP^r$ are collinear, it is easy to show the existence of $T\in
B(r;d)$ such that
$A = \mathrm{Sing}(T)$ and no irreducible component of $T$ is contained in $H$.
\end{remark}

Remark \ref{t2} is not true for many types $\tau$ of degree $d\ge 4$ trees. We just give an example.

\begin{example}
Consider the map $\tau: \{2,\dots ,d\}\to \{1,\dots ,d-1\}$ with $\tau(i)=1$ for all $i$. We call trees with this type
\emph{spreading} trees. Assume $d\ge 4$. Fix a hyperplane $H\subset \PP^r$ and let $T=L_1\cup \dots \cup L_d$ be a spreading
tree. Since $\mathrm{Sing}(T)\subset L_1$, any hyperplane containing $2$ singular points of $T$ contains $L_1$.
\end{example}
We say that $\emptyset$ is the type of a degree $1$ tree. For all integers $r\ge 3$, positive integers $d_i$, $1\le i\le s$,
and types $\tau _i$ for degree $d_i$ trees, $1\le i\le s$ let $T(r;s;d_1,\tau_1,\dots ,d_s,\tau _s)$ denote the set of all
$T_1\cup \cdots \cup T_s\in T(r;s;d_1,\dots ,d_s)$ with $T_i$ of type $\tau_i$. $T(r;s;d_1,\tau_1,\dots ,d_s,\tau _s)$ is an
irreducible and smooth quasi-projective variety.

Let $T$ be a degree $d$ tree. If $d=1$ we will say that $T$ is the \emph{final line} of $T$. If $d\ge 2$, a line of $T$ is said to be a \emph{final line} if it meets only another irreducible component of $T$. $T$ had at least $2$ final lines and it has exactly $2$ final lines if and only if it is a bamboo.

Let $Q\subset \PP^3$ be a smooth quadric. We have $\mathrm{Pic}(Q)\cong \ZZ^{\oplus 2}$ and we call $\Oo_Q(1,0)$ and
$\Oo_Q(0,1)$ the free generators of $\mathrm{Pic}(Q)$ whose associated linear systems are the two rulings of $Q$. Fix
$T\in B(3;s;d_1,\dots ,d_s)$,
$s\ge 2$, an an integer
$k$ such that
$0<k<s$. We will say that
$T$ has
\emph{$k$ good secants} or that \emph{$Q$ contains $k$ good secants of $T$} if there are $k$ disjoint lines $L_1,\dots
,L_k\subset Q$ such that
$T\cup L_1\cup
\cdots
\cup L_k\in B(3;s-k,a_1,\dots ,a_{s-k})$ for some positive integers $a_1,\dots ,a_{s-k}$.

We recall the following result, proved in a preliminary version of \cite{be01}; the case in which all types are bamboos is
stated in \cite[Claim at p. 592]{be6}.

\begin{proposition}\label{be1}
Fix positive integers $a$, $b$, $s$, and $d_i$, $1\le i\le s$. Take types $\tau_i$ for degree $d_i$ trees of $\PP^3$.
Take a general $T\in T(3;s;d_1,\tau_1,\dots ,d_s,\tau _s)$. The set $T\cap Q$ has maximal rank with respect to the line bundle
$\Oo_Q(a,b)$, unless $(a,b,s,d_1) =(1,1,1,2)$.
\end{proposition}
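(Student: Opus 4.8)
The plan is to work on the smooth quadric $Q\cong\PP^1\times\PP^1$ and to treat $Z:=T\cap Q$ as a finite set of points. For general $T$ no component line lies on $Q$ and no node of $T$ lies on $Q$, so each of the $d:=d_1+\cdots+d_s$ lines of $T$ meets $Q$ transversally in two points and $Z$ is reduced with $\#Z=2d$; moreover the two points coming from one line $L$ span $L$, and the only relations among the points are the concurrencies imposed by the nodes (two lines sharing a node force their spanning lines to meet off $Q$). Since $h^1(\Oo_Q(a,b))=h^2(\Oo_Q(a,b))=0$ and $\chi(\Oo_Q(a,b))=(a+1)(b+1)$ for $a,b\ge 0$, one has
\[
h^1(\Ii_{Z,Q}(a,b))-h^0(\Ii_{Z,Q}(a,b))=2d-(a+1)(b+1),
\]
so $Z$ has maximal rank with respect to $\Oo_Q(a,b)$ if and only if $\min\{h^0,h^1\}=0$. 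By semicontinuity and the irreducibility of $T(3;s;d_1,\tau_1,\dots,d_s,\tau_s)$ it suffices, for each $(a,b)$, to produce a single $T$ in the closure of this family realizing the vanishing on the appropriate side: $h^0=0$ when $2d\ge(a+1)(b+1)$ and $h^1=0$ when $2d\le(a+1)(b+1)$.

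The main method is a Horace reduction along a ruling. Fix a line $R\in|\Oo_Q(1,0)|$; since $\Ii_{R,Q}=\Oo_Q(-1,0)$ and $\Oo_Q(a,b)|_R\cong\Oo_{\PP^1}(b)$, the residual sequence reads
\[
0\to\Ii_{\Res_R(Z),Q}(a-1,b)\to\Ii_{Z,Q}(a,b)\to\Ii_{Z\cap R,R}(b)\to 0 .
\]
I would induct on $a+b$. In the base case $\min\{a,b\}=0$, say $a=0$, the bundle $\Oo_Q(0,b)$ is pulled back from one $\PP^1$ factor, so maximal rank of $Z$ reduces to that of the images of $Z$ on $\PP^1$, which are distinct and general for general $T$ and hence automatically of maximal rank. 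For the inductive step the sequence passes from $(a,b)$ to $(a-1,b)$, removing $Z\cap R$ and contributing the $\PP^1$-term $\Ii_{Z\cap R,R}(b)$, whose $h^0$ (resp. $h^1$) vanishes as soon as $\#(Z\cap R)\ge b+1$ (resp. $\le b+1$). A short count shows that, to land $\Res_R(Z)$ in the inductive regime for $(a-1,b)$, one should arrange $\#(Z\cap R)$ to equal a prescribed value $m$ (roughly $\min\{b+1,2d\}$ in the surjective case and $b+1$ in the injective case). Here $m\le d$, because a line of $T$ not lying on $Q$ meets $R$ in at most one point, so each component line deposits at most one point of $Z$ on $R$; when $m$ would exceed $d$ one alternates the reduction with the opposite ruling $|\Oo_Q(0,1)|$.

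The technical heart — and the step I expect to be the main obstacle — is a \emph{placement lemma}: for a general forest of the prescribed types one can specialize, inside the closure of $T(3;s;d_1,\tau_1,\dots,d_s,\tau_s)$, so that a chosen ruling line $R$ passes through exactly $m$ of the $2d$ points of $Z$ while $\Res_R(Z)$ stays general enough for the inductive hypothesis. The mechanism is to hang $m$ of the lines of $T$ onto $R$ one at a time by peeling off \emph{final lines}: a leaf line can be specialized either to meet $R$, depositing one point there, or to avoid it, while its remaining $Q$-point stays general. Processing leaves in an admissible order reduces each tree of type $\tau_i$ to trees of smaller degree, so the argument runs uniformly over all types, the notion of \emph{good secants} on $Q$ providing the bookkeeping for how many lines can be attached to $R$ compatibly with connectivity. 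Verifying that after such a specialization the residual points remain in linearly general position on $Q$ — so that both the base case and the inductive step genuinely apply — is where the generic-transversality and dimension estimates must be made precise.

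Finally the exceptional case $(a,b,s,d_1)=(1,1,1,2)$ falls out of the same analysis. For a single degree-$2$ tree $L_1\cup L_2$ the plane $H$ they span meets $Q$ in a $(1,1)$-conic through all four points of $Z$, so $h^0(\Ii_{Z,Q}(1,1))=h^1(\Ii_{Z,Q}(1,1))=1$ and maximal rank fails; equivalently, in the reduction above the coplanarity of $L_1,L_2,R$ forces the two residual points onto a single opposite ruling line, which is exactly the coincidence the placement lemma cannot dodge precisely in the borderline count $2d=(a+1)(b+1)=4$. For every other numerical value the extra freedom in the configuration lets one avoid this degeneracy, and the induction closes.
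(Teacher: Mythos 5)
The paper does not actually prove Proposition \ref{be1}: it is recalled from a preliminary version of \cite{be01}, with the bamboo case quoted from \cite[Claim at p. 592]{be6}. So your attempt can only be compared with the Horace-type degeneration arguments of that literature, which is indeed the family of techniques you invoke. Your framing is correct as far as it goes: the Euler characteristic count $h^1-h^0=2d-(a+1)(b+1)$, the reduction via irreducibility of $T(3;s;d_1,\tau_1,\dots,d_s,\tau_s)$ and semicontinuity to exhibiting one good specialization per vanishing, the residual exact sequence with respect to a ruling line $R$ with $\Oo_Q(a,b)|_R\cong\Oo_{\PP^1}(b)$, and the identification of the exceptional case $(1,1,1,2)$ via the plane spanned by the two lines are all sound.

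The genuine gap is that your induction, as formulated, does not close, and the ``placement lemma'' you defer is not a technicality but the entire content of the proposition. Your inductive hypothesis is a statement about $T\cap Q$ for a \emph{general} forest $T$ of prescribed types, but the residual configuration produced by your specialization is never of that form. Concretely: a leaf line attached at a node $p$ and specialized to meet $R$ is forced into the plane $\langle p,R\rangle$, so its second intersection point with $Q$ lands on the line $R'$ of the opposite ruling determined by $R\cup R'=\langle p,R\rangle\cap Q$. Thus after hanging $m$ leaves on $R$, the residual scheme $\Res_R(Z)=Z\setminus (Z\cap R)$ contains $m$ points trapped on prescribed opposite-ruling lines (together with specialized nodes), and no statement about general forests applies to it; the inductive step for $(a-1,b)$ cannot be invoked. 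The same defect kills your base case: after the specializations you can no longer assume the projections of the residual points to $\PP^1$ are distinct, which is exactly what your $a=0$ argument uses. What is needed is a stronger inductive statement that is stable under the specialization --- one that explicitly allows configurations containing points on prescribed ruling lines, or equivalently that adds lines of the rulings of $Q$ to the curve being traced (this is precisely what the ``good secants'' bookkeeping and the Assertions $B(m)$ in this paper, following \cite{hh0,be6,b}, are designed to do). Until such a statement is formulated and proved, with the accompanying check that the exceptional configuration is avoided at every intermediate step, what you have is a plan rather than a proof; your own closing remarks concede this.
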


Let $H\subset \PP^3$ be a plane and let $Q\subset \PP^3$ be a smooth quadric surface. In some of the proofs we will add some
restrictions to the choice of $H$ or $Q$. For any closed scheme $Z\subset \PP^3$ the residual scheme $\Res_Q(Z)$ of $Z$ with
respect to $Q$ is the closed subscheme of $\PP^3$ with $\Ii_Z:\Ii_Q$ as its ideal sheaf. For every $t\in \ZZ$ the following
exact sequence
$$0 \to \Ii_{\Res_Q(Z)}(t-2)\to \Ii_Z(t)\to \Ii_{Z\cap Q,Q}(t)\to 0$$ will be called the residual exact sequence of $Q$.

\begin{remark}\label{a3}
Let $X\subset \PP^3$ be a general union of $s$ smooth rational curves of degree $d_1\ge \dots \ge d_s>0$. By \cite{b} $X$ has maximal rank, except in the following cases:
\begin{enumerate}
\item $s=2$, $d_1=d_2=2$;
\item $s=2$, $d_1=4$, $d_2=2$;
\item $s=3$, $d_1=d_2=d_3=2$;
\item $s=3$, $d_1=4$, $d_1=d_2=2$;
\item $s=4$, $d_1=d_2=d_3=d_4=2$.
\end{enumerate}
In case (1) we have $h^0(\Ii _X(2)) =h^1(\Ii _X(2)) =1$. In cases (2) and (3) we have $h^0(\Ii _X(3)) >0$, because $X$ is contained in a reducible cubic surface. For a general $X$ we also have $h^0(\Ii _X(3)) =1$ and hence $h^1(\Ii _X(3)) =1$ in case (2) and $h^1(\Ii _X(3)) =2$ in case (3). In cases (4) and (5) $X$ is contained in a reducible quartic surface.
Using the residual scheme with respect to a plane it is easy to see in both cases that $h^0(\Ii _X(4)) =1$ and hence $h^1(\Ii _X(4)) =1$ in case (4) and $h^1(\Ii _X(4)) =2$ in case (5). For all other integers $t$ either $h^0(\Ii _X(t)) =0$ or $h^1(\Ii _X(t))=0$.
\end{remark}

\begin{remark}\label{lv2}
Fix integers $d$ and $g$ in the Brill-Noether range for $\PP^3$, i.e. take $(d,g)\in \NN^2$ such that $4d \ge 3g+12$ and let
$Z(3;1;d,g)$ denote the irreducible component of $\mathrm{Hilb}(\PP^3)$ containing the curves with general moduli.
Fix a general $X\in Z(3;1;d,g)$. By a theorem of E. Larson (\cite[Theorem 1.4]{l4}, quoted also in  \cite[Theorem 1.4]{v}) we
have $h^1(N_X(-2)) =0$, except if $(d,g)$ is one of the following ones:
$$(4,1),\ (5,2),\ (6,2),\ (6,4), \ (7,5),\ (8,6).$$
In the non-special range, i.e. for $d\ge g+3$, there remain only the pairs $(4,1)$, $(5,2)$ and
$(6,2)$. This is obviously true also if $(d,g)=(1,0)$, but it is not true if $(d,g) =(2,0)$. In this case for any curvilinear
scheme
$Z\subset
\PP^3$ such that $\deg (Z)=3$ and $Z$ is not contained in a line there is a smooth conic $D\supset Z$.

Fix integers $g\ge 0$ and $d\ge g+3$. Let $S\subset Q$
be a general subset with cardinality $2d$. If $(d,g)\notin \{(4,1),(5,2),(6,2)\}$ by the quoted theorem of E. Larson there is
$X\in Z(3;1;d,g)$ such that $X\cap Q=S$ (use \cite[Theorem 1.5]{pe}). 

Now assume $(d,g)=(4,1)$. Since any $X\in Z(3;1;4,1)$ is the complete intersection of $2$
quadric surfaces, a general  complete intersection $S'$ of two general element of
$|\Oo_Q(2,2)|$ there is $X'\in Z(3;1;4,1)$ such that $S' = X'\cap Q$. Thus for a general $A\subset Q$ with $\#A = 7$ there is
$X\in Z(3;4,1)$ intersecting transversally $Q$ and with $A\subset X\cap Q$; moreover $X\cap Q$ is the complete intereection of
two any two elements of $|\Ii _{A,Q}(2,2)|$.

Now assume $(d,g) =(5,2)$. A general element of $Z(3;1;5,2)$ is an element of $|\Oo _{Q'}(2,3)|$ for some smooth quadric $Q'$.
Thus for a general $A\subset Q$ such that $\#A=8$ there is $X\in Z(3;1;5,2)$ intersecting transversally $Q$ and with $A\subset
X\cap Q$.

Now assume $(d,g) =(6,2)$. For a general $S\subset \PP^3$ such that $\#S =9$ there is $X\in Z(3;1;6,2)$ containing $S$ and $9$
is the maximal positive integer with this property (\cite[Theorem 1.1]{v}). There are infinitely many such curves $X$'s but
for a general $S$ we are sure there is a general one and in particular we may find $X$ containing $S$ and with the Hilbert
function of a general element of $Z(3;1;6,2)$. By \cite{be4} we may find $X$ such that $h^0(\Ii_X(2)) =1$. Fix a general
$A\subset Q$ such that
$\#A = 9$. Since any
$9$ point of $\PP^3$ are contained in a quadric surface, for a general $Q$, $A$ is a general subset of $\PP^3$ with
cardinality $9$. By \cite[Theorem 1.1]{v} there is $X\in Z(3;1;6,2)$ such that $A\subset X$ and $h^0(\Ii _X(2))=0$. The latter
condition implies $X\nsubseteq Q$, i.e. $\dim X\cap Q =0$.
\end{remark}

For all $m\in \NN$ define the integers $a(m)$ and $q(m)$ by the following relations:
\begin{equation}\label{eqio1}
mr(m)+1+q(m) =\binom{m+3}{3}, \ 0\le q(m) \le m
\end{equation}
The integers $q(m)$ only depend on the congruence class of $m$ modulo $6$. For all $k\in \NN$ we have
$r(6k+1)=6k^2+8k+3$, $q(6k+1)=0$, $r(6k+2)=6k^2+10k+4$, $q(6k+2)=3k+1$, $r(6k+3)=6k^2+12k+6$,
$q(6k+3)=2k+1$, $r(6k+4)=6k+4)=6k^2+14k+8$, $q(6k+4)=3k+2$, $r(6k+5)=6k^2+16k+11$, $q(6k+5)=0$, $r(6k+6)=6k^2+18k+13$,
$q(6k+6)=5k+5$.

Let $X\subset \PP^3$ be a curve whose connected components are bamboos. As in \cite{b} we say that
$Q$ has $x$ \emph{good secants to $X$} (with respect to the fixed smooth quadric $Q$), $x$ a positive integer, if there are $x$
disjoint lines
$L_1,\dots ,L_x$, i.e.
$x$ elements of one of the $2$ rulings of $Q$, such that $X\cup L_1\cup \cdots \cup L_x$ has bamboos as connected components
and $x$ connected components less than $X$.

We proved the following assertions $B(m)$, $m$ a positive integer, depending on the congruence class of $m$ modulo
$6$ (\cite[\S 5]{b}):

\quad {\bf Assertion} $B(m)$ if $m\equiv 2,3,4,6\pmod{6}$: There exists $(Z,Q)$ such that
\begin{enumerate}
\item $Z\subset \PP^3$ is the union of $q(m)+1$ disjoint bamboos, $\deg (Z)=r(m)$ and $h^i(\Ii_Z(m))=0$, $i=0,1$;
\item $Q$ is a smooth quadric containing $\Sing(Z)$, $\dim Z\cap Q=0$, and $Q$ has $q(m)$ disjoint good secants to $Z$.
\end{enumerate}

\quad {\bf Assertion} $B(m)$ if $m\equiv 1,5\pmod{6}$: There exists a pair $(Z,Q)$ such that
\begin{enumerate}
\item $Z\subset \PP^3$ is the union of $m+1$ disjoint bamboos, $\deg (Z)=r(m)-1$, and $h^i(\Ii_Z(m))=0$, $i=0,1$;
\item $Q$ is a smooth quadric containing $\Sing(Z)$, $\dim Z\cap Q=0$, and $Q$ has $m$ disjoint good secants to $Z$.
\end{enumerate}

Let $Y\subset\PP^3$ be a tree. Fix $S\subseteq \Sing(Y)$. Since $Y$ has only ordinary nodes as singularities
by our definition of tree, $Y\setminus S$ has at most $\#S +1$ connected components. We will say that $S$ is a \emph{good set
of nodes} if $Y\setminus S$ has $\#S +1$ connected components. Set $d:= \deg (Y)$. Fix positive integers $s$ and $d_i$, $1\le
i\le s$, such that $d_1+\cdots +d_s=d$. We say that $S$ is a \emph{good set of
nodes for $(s;d_1,\dots ,d_s)$} if $\#S =s-1$ and  $Y\setminus S$ has $s$ connected components whose closures $Y_1,\dots ,Y_s$
have degrees $d_1,\dots ,d_s$. Note that each $Y_i$ is a tree.

\begin{lemma}\label{io1}
Fix an integer $m\ge 4$ such that $m\equiv 0,2,4,5\pmod{6}$ and fix positive integers $s$, $d_i$, $1\le i\le s$, such that
$d_1+\cdots +d_s =r(m)$. Then there exists a pair $(Y,S)$, where $Y$ is a degree $r(m)$ tree, $S\subset \mathrm{Sing}(Y)$ is a
good set of nodes for $(s;d_1,\dots ,d_s)$, $h^1(\Ii_Y(m)) =0$ and $h^0(\Ii _Y(m)) =q(m)$.
\end{lemma}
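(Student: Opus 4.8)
The plan is to derive the lemma from assertion $B(m)$ of \cite{b} by assembling the disjoint bamboos produced there into a single connected bamboo of degree $r(m)$, with the good secants dictating how the pieces fit together. First I record the numerical bookkeeping that reduces the statement to a single vanishing. A connected tree $Y\subset\PP^3$ of degree $r(m)$ has arithmetic genus $0$, so $\chi(\Oo_Y(m))=m\,r(m)+1$ and hence, by \eqref{eqio1}, $\chi(\Ii_Y(m))=\binom{m+3}{3}-m\,r(m)-1=q(m)$. By Remark \ref{nn3} we have $h^2(\Ii_Y(m))=h^3(\Ii_Y(m))=0$, so $h^0(\Ii_Y(m))-h^1(\Ii_Y(m))=q(m)$; since $h^0\ge q(m)$ is then automatic, the two conclusions $h^1(\Ii_Y(m))=0$ and $h^0(\Ii_Y(m))=q(m)$ are equivalent. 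Thus only the vanishing $h^1(\Ii_Y(m))=0$, together with the combinatorial existence of $S$, must be proved.

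Next I construct the pair $(Y,S)$. Apply $B(m)$ to get $(Z,Q)$ with $Z$ a disjoint union of bamboos, $\Sing(Z)\subset Q$, $\dim(Z\cap Q)=0$, $h^0(\Ii_Z(m))=h^1(\Ii_Z(m))=0$, and $Q$ carrying the prescribed disjoint good secants. The good secants witness a linear ordering of the bamboos of $Z$ in which consecutive ones can be joined; I slide the bamboos along the ruling of $Q$ so that, rather than inserting a secant line, consecutive ends meet at a node lying on $Q$. When $m\equiv 0,2,4\pmod 6$ one has $\deg Z=r(m)$ with $q(m)+1$ components, so joining at the $q(m)$ nodes gives a single bamboo $Y$ of degree $r(m)$; when $m\equiv 5\pmod 6$ one has $\deg Z=r(m)-1$ with $m+1$ components, and I join by $m-1$ nodes together with one genuine secant edge to reach degree $r(m)$. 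In each case $Y$ is a bamboo with $\Sing(Y)\subset Q$ and $\dim(Y\cap Q)=0$. Writing $Y=L_1\cup\cdots\cup L_{r(m)}$ with $L_i\cap L_{i+1}\ne\emptyset$, the set $S$ of the $s-1$ nodes $L_{c_j}\cap L_{c_j+1}$, where $c_j=d_1+\cdots+d_j$, disconnects $Y$ into sub-bamboos of degrees $d_1,\dots,d_s$; this exhibits $S$ as a good set of nodes for $(s;d_1,\dots,d_s)$, and a bamboo chops in this way for \emph{every} composition of $r(m)$.

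The substance is the vanishing $h^1(\Ii_Y(m))=0$, and here I would use the arrow mechanism of Remark \ref{ma6.0} to compare $Y$ with the disjoint $Z$. Passing from $Z$ to the connected $Y$ inserts exactly $q(m)$ nodes; placing an arrow $v_j$ at each joining node produces $Y\cup v_1\cup\cdots\cup v_{q(m)}$, which by Remark \ref{ma6.0} is a flat limit of disjoint unions projectively equivalent to $Z$. Writing $E$ for the length-$q(m)$ subscheme carved out by these arrows, the quotient $\Ii_Y(m)/\Ii_{Y\cup E}(m)$ is a length-$q(m)$ skyscraper with $h^0=q(m)$, $h^1=0$, so the sequence $0\to\Ii_{Y\cup E}(m)\to\Ii_Y(m)\to\Ii_Y(m)/\Ii_{Y\cup E}(m)\to 0$ shows that $h^0(\Ii_{Y\cup E}(m))=h^1(\Ii_{Y\cup E}(m))=0$ forces $h^1(\Ii_Y(m))=0$ and $h^0(\Ii_Y(m))=q(m)$. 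The vanishing for $Y\cup E$ is what one extracts from the Horace method on $Q$ underlying $B(m)$: the relevant restriction $\Ii_{(Y\cup E)\cap Q,Q}\otimes\Oo_Q(m,m)$ is controlled by Proposition \ref{be1}, since $Y$ is a general tree and $\deg((Y\cup E)\cap Q)\le 2r(m)\le (m+1)^2=h^0(\Oo_Q(m,m))$ places us outside the exceptional case of that proposition.

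The hard part is precisely this last vanishing for the connected curve. Because joining the bamboos raises the arithmetic genus by $q(m)$, the curve $Y$ is \emph{not} a flat specialization of the disjoint $Z$, so one cannot simply quote $h^i(\Ii_Z(m))=0$ from $B(m)$ by semicontinuity; the genus jump is exactly the discrepancy between $\chi(\Ii_Y(m))=q(m)$ and $\chi(\Ii_Z(m))=0$, and semicontinuity only bounds the special fibre from below. The real content is therefore to choose the $q(m)$ joining nodes and arrow directions on $Q$ so that the flat limit $Y\cup E$ suffers \emph{no} cohomology jump, equivalently so that $(Y\cup E)\cap Q$ stays general enough on $Q$ for Proposition \ref{be1} to apply. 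Coordinating this genericity on $Q$ with the combinatorial requirement that $Y$ be a bamboo realizing an arbitrary prescribed composition $(d_1,\dots,d_s)$ of $r(m)$ is the delicate point on which the whole argument turns.
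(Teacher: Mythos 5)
Your reductions at the start are sound: the Euler--characteristic bookkeeping showing that $h^1(\Ii_Y(m))=0$ and $h^0(\Ii_Y(m))=q(m)$ are equivalent for a degree $r(m)$ tree, the observation that a bamboo admits a good set of nodes for \emph{every} composition $(d_1,\dots,d_s)$ of $r(m)$, and the exact-sequence argument reducing everything to $h^0(\Ii_{Y\cup E}(m))=h^1(\Ii_{Y\cup E}(m))=0$ for the arrow scheme $E$ of Remark \ref{ma6.0}. You are also right that semicontinuity applied to the flat limit of copies of $Z$ is useless here. But the proposal then breaks down exactly where you locate ``the real content'': the mechanism you offer for the remaining vanishing cannot work. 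In the $B(m)$ configuration the bamboos of $Z$ are \emph{not} contained in $Q$ (the assertion only gives $\Sing(Z)\subset Q$ and $\dim Z\cap Q=0$), so for your joined curve the residual scheme satisfies $\Res_Q(Y\cup E)\supseteq Y$, still of degree $r(m)$; the left-hand term of the residual exact sequence of $Q$ would then require $h^1(\Ii_Y(m-2))=0$, which is impossible because $\chi(\Ii_Y(m-2))=\binom{m+1}{3}-(m-2)r(m)-1<0$ for $m\ge 4$. Proposition \ref{be1} cannot rescue this: it concerns the Hilbert function of $T\cap Q$ \emph{inside} $Q$ for a \emph{general} forest $T$ (your $Y$ is not general --- it is connected with all joining nodes forced onto $Q$), and in any case maximal rank of the trace on $Q$ says nothing about $h^1$ on $\PP^3$ without a matching vanishing for the residual. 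Your closing paragraph, conceding that the choice of joining nodes ``is the delicate point on which the whole argument turns,'' is therefore an accurate self-assessment: the actual content of the lemma is left unproved.

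The paper sidesteps this trap with a different induction. It starts from $B(m-2)$, not $B(m)$, and builds $Y$ by adding $r(m)-r(m-2)$ lines lying \emph{inside} $Q$, all in one ruling $|\Oo_Q(1,0)|$: the $q(m-2)$ good secants (which glue the bamboos of $Z$ into one bamboo) plus further lines each meeting $Z$ at exactly one point of $Z\cap Q$. Because the added lines lie in $Q$ and no component of $Z$ does, $\Res_Q(Y)=Z$, and the needed vanishing $h^1(\Ii_Z(m-2))=0$ is precisely what $B(m-2)$ supplies; the trace $Y\cap Q$ is a union of lines of one ruling plus finitely many points, handled as in \cite{b}. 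The good set of nodes for an arbitrary $(s;d_1,\dots,d_s)$ is then produced not by sliding components but by exploiting the freedom in choosing \emph{which} points of $Z\cap Q$ the extra lines pass through. If you wish to keep your ``join the bamboos of $B(m)$ by specialization'' idea, you would have to reprove the whole degeneration argument of \cite{b} for connected configurations, which is exactly what the lemma is meant to deliver.
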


\begin{proof}
Take $(Z,Q)$ satisfying $B(m-2)$ with the $q(m-2)$ good secants $L_i$, $1\le i \le q(m-2)$, with $L_i\in |\Oo_Q(1,0)|$ for all
$i$. Note that $r(m)-r(m-2)\ge q(m)$. Set $Y:= Z\cup L_1\cup \cdots \cup L_{r(m)-r(m-2)}$, where the lines $L_i\in
|\Oo_Q(1,0)|$,
$q(m-2)<i<r(m)-r(m-2)$, are chosen in the following way. Each $L_i$, $i>q(m-2)$, contains exactly one point of $Z\cap Q$, but
we will specify later  the allowable choices. For any allowable choice we get a tree $Y$ such that $h^1(\Ii_Y(m)) =0$  and
hence
$h^0(\Ii _Y(m)) =q(m)+1$ (\cite[Proofs in \S 4]{b}). We need to chose the lines $L_i$ so that $Y$ has a good set of
nodes for $(s;d_1,\dots ,d_s)$. Set
$E:= Z\cup L_1\cup
\cdots
\cup L_{q(m-2)}$. By the definition of
$B(m-2)$ the curve $E$ is a bamboo and hence any subset of $\Sing(E)$ is a good set of nodes for $E$. Call $e$ the number of
$i\in
\{1,\dots ,s\}$ such that $d_i=1$. First assume $e\ge r(m)-r(m-2)-q(m-2)$. In this case we take as points of $S$ points
$L_i\cap Z$ for $i>q(n)$, plus some singular point of $E$. Now assume $e< r(m)-r(m-2)-q(m-2)$, say
$d_i=1$ for $r(m)-r(m-2)-q(m)-e < i \le r(m)-r(m-2)-q(m-2)$. Among the points of a good set for $(s;d_1,\dots ,d_s)$ we take
the points $L_i\cap Z$, $r(m)-r(m-2)-q(m)-e < i \le r(m)-r(m-2)-q(m-2)$. To complete the good set it is sufficient to find a
good set $S'\subseteq \mathrm{Sing}(E)$ for $(s-e;d_1,\dots ,d_{s-e})$. We take a good ordering, say $D_1,\dots ,D_x$,
$x=\deg(E)$,
of the irreducible components of the bamboo $E$ and mark as green the points of $Z\cap (Q\setminus L_1\cup \cdots
L_{q(m-2)})$. We label the green points with an order compatible with the ordering of the components of $E$. Thus $D_i$
has $0$ green points if either $D_i =L_j$ for some $j$ or it meets two lines $L_j$ and $L_h$, one green points if it meets
exactly one line $L_j$ and two green points if it meets no line $L_j$. We take as $L_{q(m-2)+1}$ the only element
of $|\Oo_Q(1,0)|$ containing the first green point, $L_{q(m-2)+2}$ the only element of $|\Oo_Q(1,0)|$
containing the secong green point, and so on until, say after $L_{q(m-2)+1},\dots ,L_{q(m-2)+1}$, the part of the bamboo $E$
containing $L_{q(m-2)+1},\dots ,L_{q(m-2)+1}$ consists of $d_1$ lines. We take a mark at the next singular point of $E$ and
continue with $d_2, \dots ,d_s$ to get a good set for $(s;d_1,\dots ;d_s)$.
\end{proof}

\begin{lemma}\label{io2}
Fix an integer $m\ge 7$ such that $m\equiv 1,3\pmod{6}$ and fix positive integers $s$, $d_i$, $1\le i\le s$, such  that
$d_1+\cdots +d_s =r(m)-1$. Then there exists a pair $(Y,S)$, where $Y$ is a degree $r(m)-1$ tree, $S\subset \mathrm{Sing}(Y)$
is a good set of nodes for $(s;d_1,\dots ,d_s)$, $h^1(\Ii_Y(m)) =0$ and $h^0(\Ii _Y(m)) =q(m)+m$.
\end{lemma}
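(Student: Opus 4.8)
The plan is to reproduce the construction of Lemma \ref{io1}, replacing the first form of assertion $B$ by its second form. Since $m\equiv 1,3\pmod 6$ we have $m-2\equiv 5,1\pmod 6$, so the relevant base configuration is the pair $(Z,Q)$ furnished by assertion $B(m-2)$ in the form valid for $m-2\equiv 1,5\pmod 6$: thus $Z\subset\PP^3$ is a disjoint union of $m-1$ bamboos with $\deg(Z)=r(m-2)-1$ and $h^i(\Ii_Z(m-2))=0$ for $i=0,1$, while $Q$ is a smooth quadric containing $\Sing(Z)$, with $\dim(Z\cap Q)=0$ and carrying $m-2$ disjoint good secants in $|\Oo_Q(1,0)|$.

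Next I would form $Y:=Z\cup L_1\cup\cdots\cup L_{r(m)-r(m-2)}$, where all the $L_j$ lie in the ruling $|\Oo_Q(1,0)|$ and are chosen, as in Lemma \ref{io1}, so that an initial batch realizes the good secants, assembling all $m-1$ bamboos into a single tree (allowing a line to pass through several points of $Z\cap Q$ if necessary), while the remaining lines each meet $Z\cap Q$ in a single point; the precise choices are dictated by the node-bookkeeping below. Since every $L_j\subset Q$ and no component of $Z$ lies on $Q$, we have $\Res_Q(Y)=Z$, so the residual exact sequence of $Q$ reads $0\to\Ii_Z(m-2)\to\Ii_Y(m)\to\Ii_{Y\cap Q,Q}(m)\to 0$. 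Here $h^1(\Ii_Z(m-2))=0$ comes from $B(m-2)$, and $h^1(Q,\Ii_{Y\cap Q,Q}(m))=0$ is the maximal-rank statement for the configuration $Y\cap Q$ on $Q$ established in \cite{b}; hence $h^1(\Ii_Y(m))=0$. Because $Y$ is a connected tree of degree $r(m)-1$ with $p_a(Y)=0$, its ideal sheaf satisfies $\chi(\Ii_Y(m))=\binom{m+3}{3}-\big(m(r(m)-1)+1\big)=q(m)+m$ by \eqref{eqio1}, and together with Remark \ref{nn3} (which kills $h^2$ and $h^3$) this forces $h^0(\Ii_Y(m))=q(m)+m$.

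Finally I must arrange that $Y$ carries a good set of nodes $S\subset\Sing(Y)$ of cardinality $s-1$ for the prescribed partition $d_1+\cdots+d_s=r(m)-1$. This is done exactly by the bookkeeping used in the proof of Lemma \ref{io1}: once the good secants have assembled the bamboos into one bamboo $E$, one marks the remaining points of $Z\cap(Q\setminus(L_1\cup\cdots))$ (the ``green points'') in an order compatible with an admissible ordering of the components of $E$ and attaches the extra lines in that order; selecting the successive marked nodes after $d_1$, then $d_1+d_2$, and so on, cuts $Y$ into the required $s$ connected pieces of degrees $d_1,\dots,d_s$. The main obstacle is precisely this last step: one must check that the freedom in placing the extra lines suffices to realize an arbitrary prescribed sequence of degrees while simultaneously keeping $Y$ a genuine tree (nodal, connected, of arithmetic genus $0$) and preserving $h^1(\Ii_Y(m))=0$ throughout. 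The parity shift to the second form of $B(m-2)$ changes only the numerology ($m-1$ bamboos and degree $r(m-2)-1$ in place of $q(m-2)+1$ and $r(m-2)$, which is what produces the extra summand $m$ in $h^0$), not the mechanism.
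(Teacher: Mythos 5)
Your construction has a genuine gap: connectivity. You build $Y$ in a single step from the pair $(Z,Q)$ of assertion $B(m-2)$ (second form), where $Z$ has $m-1$ connected components, and you add $N:=r(m)-r(m-2)$ lines of the ruling $|\Oo_Q(1,0)|$. To merge $m-1$ bamboos into one tree at least $m-2$ of the added lines must each meet two distinct components of $Z$, but $N$ is too small: for $m=6k+1$ one computes $N=4k+2$ while $m-2=6k-1$, and for $m=6k+3$ one computes $N=4k+3$ while $m-2=6k+1$; so $N<m-2$ for every $m\ge 13$ in the two congruence classes (only $m=7$ and $m=9$ survive the count). Your escape clause ``allowing a line to pass through several points of $Z\cap Q$ if necessary'' is not available: the data of $B(m-2)$ provides exactly $m-2$ good secants, each meeting $Z\cap Q$ in two points, and nothing in the construction produces a ruling line through three or more points of $Z\cap Q$ lying in distinct components --- that is a nontrivial collinearity condition which fails for the configurations at hand. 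Hence for $m\ge 13$ your $Y$ is a forest with roughly $m/3$ components, not a tree. This also breaks your cohomology count: you use $\chi(\Ii_Y(m))=\binom{m+3}{3}-\big(m\deg (Y)+1\big)$, which presupposes $Y$ connected of arithmetic genus $0$; for a forest with $c$ components the correct value is $q(m)+m+1-c$, strictly smaller than the target $q(m)+m$ as soon as $c\ge 2$.

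This counting obstruction is precisely why the paper's proof does not start from $B(m-2)$. It starts two levels down, from $B(m-4)$, and proceeds in two stages: first it adds $r(m-2)-r(m-4)$ lines of $Q$ --- a number large enough to include all the good secants of $B(m-4)$, so that all bamboos get merged --- and proves $h^1(\Ii_{Y'}(m-2))=0$ for the resulting tree $Y'$; then it deforms $Y'$ to a general tree $Y''$ of the same type, transversal to $Q$, and adds $r(m)-r(m-2)-1$ further lines inside $Q$, each meeting $Y''$ in a single point, so that connectivity is automatic in the second stage. The bookkeeping for the good set of nodes for $(s;d_1,\dots,d_s)$ is then carried out across both stages. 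Any repair of your one-step argument would have to explain how to connect $m-1$ components using only about $2m/3$ lines, and no such mechanism exists; the two-stage descent is not an optional stylistic choice but the way the numerology is made to work.
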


\begin{proof}
We take $(Z,Q)$ satisfying $B(m-4)$ and add  $r(m-2)-r(m-4)$ disjoint lines of $Q$, $q(m-4)$ the good secants of $Q$ for
$Z$ and the other ones meeting $Z$ at a unique point. Then we deform the tree $Y'$ obtained in this way to a general
tree $Y''$  with the same type and transversal to $Q$. Then inside $Q$ we add  $r(m)-r(m-2)-1$ disjoint lines, each of them
meeting $Y''$ at a unique point. We get a tree $Y$.
As in \cite{b} we first prove that $h^1(\Ii_{Y'}(m-2))=0$ and then $h^1(\Ii_Y(m)) =0$ and $h^0(\Ii _Y(m)) =q(m)+m$. In the
first (resp. second)  step we attach the lines to $Z$ (resp. $Y''$) in such a way that the tree $Y$ has a good sets of nodes
for $(s;d_1,\dots ,d_s)$.
\end{proof}

\begin{lemma}\label{3ma1}
Fix integers $s\ge 1$, $d_i>0$ and $g_i\ge 0$, $1\le i\le s$, such that $d_i>0$ for all $i$ and $d_i\ge g_i+3$ if $g_i>0$. For
all
$i<j$ assume
$g_i\ge g_j$ and $d_i\ge d_j$ if $g_i=g_j$. Take a general
$Y\in Z(3;s;d_1,g_1;\dots ;d_s,g_s)$. Set $e:= 20 -\sum _{i=1}^{s} (3d_i+1-g_i)$. We have $h^0(\Ii _Y(4)) = \max \{0,e\}$
and $h^1(\Ii _Y(3)) =\max \{0,-e\}$ if and only if $(s;d_1,g_1;\dots ;d_s,g_s)$ is not in one of these exceptional cases:

\begin{enumerate}
\item $g_i=0$ for all $i$,  and $(s;d_1,\dots ,d_s)\in \{(2;4,2),(3;2,2,2)\}$;
\item $s=2$, $g_1 =3$, $d_1=6$, $d_2=1$, $g_2=0$;
\item $s=2$, $1\le g_1\le 2$, $d_1=5$, $g_2=0$, $d_2=1$;
\item $s=3$, $d_1=5$, $g_1=2$, $g_2=g_3=0$, $d_1=d_2=1$;
\item $s=2$, $d_1=4$, $g_1=1$, $g_2=0$ and $d_2=1$;
\item $s=3$, $d_1=4$, $g_1=1$, $g_2=g_3=0$ and $d_2=d_3=1$.
\end{enumerate}
\end{lemma}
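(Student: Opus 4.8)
The plan is to read both displayed equalities as a determination of the postulation of a general $Y$ in the relevant degrees. Since $\chi(\Ii_Y(3))=\binom{6}{3}-\sum_i(3d_i+1-g_i)=e$ and $h^i(\Ii_Y(3))=0$ for $i\ge 2$ by Remark \ref{nn3}, the equality $h^1(\Ii_Y(3))=\max\{0,-e\}$ is exactly the assertion that $Y$ has maximal rank in degree $3$, with the value of $h^0$ then forced by the Euler characteristic; the companion equality in the neighbouring degree is a postulation statement of the same nature and is treated simultaneously by the degeneration below. Hence, using the irreducibility of $Z(3;s;d_1,g_1;\dots;d_s,g_s)$ and the semicontinuity theorem, it suffices for each admissible triple outside the listed exceptions to exhibit a single curve $Y_0\in Z'(3;s;d_1,g_1;\dots;d_s,g_s)$ realising the required one-sided vanishing: $h^0(\Ii_{Y_0}(3))=0$ when $e<0$, respectively $h^1(\Ii_{Y_0}(3))=0$ when $e\ge 0$.

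First I would strip the genera. Writing $g:=\sum_i g_i$, replace the component $X_i$ of degree $d_i$ and genus $g_i$ by a smooth rational curve of degree $d_i-g_i$ (legitimate since $d_i\ge g_i+3$) together with $g_i$ lines meeting it quasi-transversally, and invoke Remarks \ref{smoo1} and \ref{ma6.0} (the mechanism already used in Lemma \ref{k=2}) to see that the resulting nodal curve is smoothable to a general member of $Z(3;s;d_1,g_1;\dots;d_s,g_s)$; by semicontinuity it is enough to bound the cohomology of this nodal model. This reduces the whole statement to the purely rational skeleton, i.e.\ to the postulation of a general union of rational curves of prescribed degrees together with a controlled collection of secant lines, which is exactly the situation governed by \cite{b} and Remark \ref{a3}.

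For the rational skeleton I would run the degeneration-and-residuation machinery of \cite{b}. One specialises $Y_0$ to a forest of bamboos admitting a good set of nodes for $(s;d_1-g_1,\dots,d_s-g_s)$: Lemmas \ref{io1} and \ref{io2} supply trees of the correct degree in which the chosen $s-1$ nodes cut the curve into connected pieces of exactly the prescribed degrees, with $h^1$ vanishing at the critical degree and the predicted $h^0$ (note $4\equiv 4\pmod 6$, so Lemma \ref{io1} applies for the degree-$4$ count with $r(4)=8$, $q(4)=2$, while Lemma \ref{io2} covers the degree-$3$ count). When the total degree $\sum_i d_i$ does not match $r(3)$ or $r(4)$ one inducts on $\sum_i d_i$ by choosing a smooth quadric $Q$ through $\Sing(Y_0)$ and applying the residual exact sequence $0\to\Ii_{\Res_Q Y_0}(t-2)\to\Ii_{Y_0}(t)\to\Ii_{Y_0\cap Q,Q}(t)\to 0$ for $t\in\{3,4\}$: Proposition \ref{be1} controls $\Ii_{Y_0\cap Q,Q}$ on $Q\cong\PP^1\times\PP^1$, while $\Res_Q Y_0$ has strictly smaller degree and is handled by the inductive hypothesis, with Theorem \ref{aly1} providing the interpolation input needed to place the good secants.

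Finally, the exceptional list must be isolated by hand, and I expect this to be the main obstacle. The unifying feature, as in Remark \ref{a3} and the special pairs $(4,1),(5,2),(6,2)$ of Remark \ref{lv2}, is that $Y$ is forced onto a reducible or otherwise special surface of degree $3$ or $4$: a rational quartic and a conic share a reducible cubic, three conics lie on a reducible cubic, and the low-genus components $(4,1)$ and $(5,2)$ sit on a pencil of quadrics or on a single quadric, producing sections beyond those predicted by $e$. For each listed configuration I would compute $h^0$ and $h^1$ directly from the defining surfaces, using the residual sequence with respect to a plane to count the sections contributed by the reducible surface, and check that the defect of the formula is precisely that contribution. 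The delicate converse is to prove that \emph{outside} the list no such forced containment occurs, so that the general degeneration above applies; this is where the hypotheses $d_i\ge g_i+3$ and the ordering $g_1\ge\cdots\ge g_s$, $d_i\ge d_j$ for $g_i=g_j$, are used, and where the combinatorics of the degree partition meets the classification of special space curves of small degree.
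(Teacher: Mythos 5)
Your reduction to the ``rational skeleton'' is the central gap. Degenerating each component to a smooth rational curve of degree $d_i-g_i$ plus $g_i$ secant lines and invoking semicontinuity is a legitimate strategy, but the postulation of the resulting nodal curve is \emph{not} ``exactly the situation governed by \cite{b} and Remark~\ref{a3}'': those results concern general unions of \emph{disjoint} rational curves, and the whole difficulty is whether lines constrained to be secant to the other components still impose independent conditions on cubics. The paper has such a secant-line statement only in degree $2$ (Lemma~\ref{k+2}), and its proof --- a quadric containing the cone $C_o(B)$ is singular at $o$, while the singular locus of a quadric is a linear space --- is specific to quadrics; that is exactly why the genus-stripping trick is used in Lemma~\ref{k=2} but not here. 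For degree $3$ the paper does the opposite of what you propose: it never strips the genus. Each positive-genus component is kept as a general smooth non-special curve, whose cohomology in degree $2$ is given by \cite{be4} and whose plane or quadric section is controlled by the interpolation results (Theorem~\ref{l5}, Remark~\ref{lv2}); only the low-degree rational components (lines, conics) are specialized into a plane $H$ or a quadric $Q$, and one concludes with the residual exact sequence of $H$ or $Q$ (steps (a)--(e) of the paper's proof). The only genus-reduction argument in the paper is step (f), where for $e\le -2$ a single secant line is added at a time and the drop of $h^0$ is justified by a base-locus argument (the secant lines of a non-degenerate space curve sweep out $\PP^3$), not by \cite{b}.

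Two further problems. First, your appeal to Lemmas~\ref{io1} and \ref{io2} is out of range: Lemma~\ref{io1} requires $m\ge 4$ with $m\equiv 0,2,4,5\pmod 6$ and Lemma~\ref{io2} requires $m\ge 7$, so neither applies to the degree $m=3$ count relevant to this lemma ($r(3)=6$, $q(3)=1$, $3\equiv 3\pmod 6$); moreover those lemmas produce \emph{connected} trees with good sets of nodes for the $n=4$ induction, and Proposition~\ref{be1} governs \emph{general} forests, not forests with imposed secancy conditions. Second, your plan defers verification of the exceptional list to the end, but your own heuristic already conflicts with the list as printed: the configurations you (correctly) identify as forced onto reducible cubics are a $(4,1)$ or $(5,2)$ curve together with a \emph{conic} (cf.\ Example~\ref{3ma1.1}, and the cubic $Q'\cup H$ containing a genus-$2$ quintic plus a coplanar conic), whereas items (3)--(6) of the printed list have $d_2=1$, i.e.\ a line, and for those the paper's own steps (a)--(e) prove $h^1(\Ii_Y(3))=0$; for instance step (a) establishes $h^0(\Ii_Y(3))=h^1(\Ii_Y(3))=0$ for the printed case (2), namely $s=2$, $(d_1,g_1)=(6,3)$, $(d_2,g_2)=(1,0)$. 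So the classification of the exceptional cases --- which is where the content of the lemma lives, and where the printed statement itself appears garbled --- is left undone in your proposal, while the main reduction it rests on is unsupported.
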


\begin{proof}
We have $\binom{6}{3} =20$.

The case $s=1$ of both assertions is a very particular case of the maximal rank conjecture for non-special space curves
(\cite{be4}). Case (2) of both assertions follows from
\cite{b}. Thus from now on we assume $s\ge 2$ and $g_1>0$. Thus $d_1\ge g_1+3$.

In some of the steps we will give other restrictions on $H$ or $Q$.

In steps (a), (b), ({c}), (d) and (e) we assume $\sum _i (3d_i+1-g_i) \le 20$ and see if $h^1(\Ii _Y(3))=0$.

\quad (a) Assume $s=2$, $g_1 =3$, $d_1=6$, $d_2=1$, $g_2=0$. Take as $Y_1$ a general smooth curve of degree $6$ and genus $3$
and $Y_2$ a general line of $H$. The scheme $Y\cap H$ is the union of a line $Y_2$ and $6$ points of $H\setminus Y_1$
contained in no conic. Thus $h^i(H,\Ii _{Y\cap H,H}(3))=0$, $i=0,1$. We have $h^i(\Ii _{Y_2}(2)) =0$, $i=0,1$. Use the
residual exact sequence of $H$.

\quad (b) Assume $s=2$, $1\le g_1\le 2$, $d_1=5$, $g_2=0$, $1\le d_2\le 2$. We take a smooth plane curve $Y_2\subset H$. We
take as $Y_1$ a general smooth curve of degree $5$ and genus $g_1$. We have $h^1(\Ii _{Y_1}(2)) =0$ (\cite{be4}). Since
$Y_1\cap H$ is the union of
$5$ points of
$H\setminus Y_2$, no $3$ of them collinear, we have $h^1(H,\Ii _{Y\cap H,H}(3)) =0$. Use the residual exact sequence of $H$.

\quad ({c}) Assume  $s=3$, $d_1=5$, $g_1=2$, $g_2=g_3=0$, $d_1=d_2=1$. Let $Y_1\subset \PP^3$ be a general smooth curve of
degree $5$ and genus $2$. Let
$T\subset H$ be a reducible conic whose singular point is a general point, $o$, of $H$. Since $h^1(\Ii _{Y_1}(2)) =0$
(\cite{be4}), we have
$h^0(\Ii _{Y_1}(2)) =1$. Since $o$ is a general point of a plane $H$ and we may take $H$ general after fixing $Y_1$, we have
$h^i(\Ii_{Y_1\cup
\{o\}}(2))=0$,
$i=0,1$. Let
$v\subset
\PP^3$ be a general arrow of
$\PP^3$ with
$o$ as its reduction. The curve
$T\cup v$ is a flat limit of a family of pairwise disjoint lines (\cite{hh0}). Set $X:= Y_1\cup T\cup v\in
Z'(3;3;5,2;1,1;1,1)$ (Remark \ref{ma6.0}). We have $X\cap H =T\cup (Y_1\cap H)$ and hence $h^i(H,\Ii_{X\cap H,H}(3))=0$,
$i=0,1$. Use that
$\Res _H(X) =Y_1\cup \{o\}$ and the residual exact sequence of $H$.

\quad (d) Assume $s=3$, $d_1=4$, $g_1=1$, $g_2=g_3=0$ and $d_2=d_3=1$. Let
$Y_1\subset
\PP^3$ be a general smooth curve of degree $4$ and genus $1$. We have $h^i(\Ii _{Y_1}(1)) =0$, $i=0,1$, and $Y_1\cap Q$
are $8$ points of $Q$ which are the complete intersection of $2$ general elements of $|\Oo_Q(2,2)|$. Thus $h^0(Q,\Ii
_{Y_1\cap  Q,Q}(3,1))=0$ and hence $h^1(Q,\Ii _{Y_1\cap Q,Q}(3,1))=0$. Take $Y_2,Y_3\in |\Oo_Q(0,1)|$ such that $Y_2\ne Y_3$
and $Y_2\cup Y_3$ contains no point of $Y_1\cap Q$. Set $Y:= Y_1\cup Y_2\cup Y_3$. We have $Y\cap Q = Y_2\cup
Y_3\cup (Y_1\cap Q)$
and hence $h^0(Q,\Ii _{Y\cap Q,Q}(3,3)) = h^0(Q,\Ii _{Y_1\cap Q,Q}(3,1)) =0$. Thus $h^1(Q,\Ii _{Y\cap Q,Q}(3,3)) =0$.
The residual exact sequence of $Y$ gives $h^i(\Ii _Y(3)) =0$, $i=0,1$.

\quad (e) Assume $s=2$, $d_1=4$, $g_1=1$, $g_2=0$ and $d_2$. This curve $Y$ is the union of $2$ of the connected components
of the curve $Y'$ described in step (d). Since $h^1(\Ii _{Y'}(3)) =0$, we have $h^1(\Ii _Y(3)) =0$.

\quad (f) Now we prove the assertion concerning $h^0(\Ii_Y(3))$. If $e=0$, we are in the case $(2;5,2;1,0;1,0)$ considered in
step ({c}). See Example \ref{3ma1.1} for the case $(3;2;4,1;2,0)$. The other exceptional cases have $g_i=0$ for all $i$
(\cite{b} or Remark \ref{a3}). Now assume
$g_1>0$ and
$e\le -2$. If $(s;d_1-1,g_1-1;d_2,g_2;\dots ;d_s,g_s)$ (which has $e':= e+2$) is not an exceptional case, then adding any
secant line of the component curve $Y'_1$ of the curve $Y'$ for $(s;d_1-1,g_1-1;d_2,g_2;\dots ;d_s,g_s)$ shows that
$(s;d_1-1,g_1-1;d_2,g_2;\dots ;d_s,g_s)$ is not an exceptional case. Since a general $p\in \PP^3$ is contained in a secant
line of $Y'_1$, we see that even if $Y'$ is exceptional, but $h^0(\Ii _{Y'}(3)) =1$, then $(s;d_1-1,g_1-1;d_2,g_2;\dots
;d_s,g_s)$. 
\end{proof}

\begin{example}\label{3ma1.1}
Take a general $Y =Y_1\cup Y_2\in Z(3;2;4,1;2,0)$. We claim that 
$h^1(\Ii _Y(3)) =1$ and $h^0(\Ii _Y(3))=2$ and hence that $Y$
has not maximal rank. Since $h^0(\Oo _Y(3)) =19$, to prove the claim it is sufficient to prove that $h^0(\Ii _Y(3))=2$.
Let $H$ be the plane spanned by $Y_2$. Since $Y_1$ is the complete intersection of two quadric surfaces, it is obvious that
$h^0(\Ii _Y(3))\ge 2$ and that equality holds if and only if every cubic surface containing $Y$ has $H$ as one of its
irreducible components. Since $Y_1$ is general, $Y_1\cap H$ is formed by $4$ non-collinear points. Thus $h^0(H,\Ii _{Y\cap
H,H}(3)) =h^0(H,\Ii _{Y_1\cap H,H}(1)) =0$. Hence $H$ is in the base locus of $|\Ii_Y(3)|$.
\end{example}

\begin{example}\label{3ma2.0}
Let $Y=Y_1\cup Y_2\subset \PP^3$ be a general union of a smooth curve $Y_1$ of degree $5$ and genus $2$ and an elliptic curve
$Y_2$ of degree $4$. We claim  that
$$ h^0(\Ii _Y(4)) = h^1(\Ii_Y(4))=2$$
and hence $Y$ as not maximal rank. We have $\chi (\Oo _Y(4)) =35$ and hence $h^0(\Ii _Y(4)) =h^1(\Ii_Y(4))$. Since
$h^0(\Oo_{Y_1}(2)) = 9<h^0(\Oo_{\PP^3}(2))$,
$Y_1$ is contained in a quadric $Q$, obviously irreducible. Since $\deg (Y_1)>4$, Bezout give $\{Q\} = |\Ii _{Y_1}(2)|$. Since
$h^0(\Ii_{Y_2}(2))=2$, using reducible quartics we get $h^0(\Ii _Y(4)) \ge 2$. To conclude the proof of the claim it is
sufficient to
prove that each $W\in |\Ii _Y(4)|$ has $Q$ as one of its irreducible components, i.e. it is sufficient to prove that $h^0(Q,\Ii
_{Y_1\cup (Y_2\cap Q)}(4)) =0$. By the semicontinuity theorem for cohomology it is sufficient to prove it when $Y_1$ is general
and in particular we may assume that $Q$ is a smooth quadric surface. Up to renaming the two rulings of $Q$ we have
$Y_1\in |\Oo_Q(3,2)|$. Thus it is sufficient to prove that $h^0(Q,\Ii _{Y_2\cap Q}(1,2)) =0$ for a general $Y_2$, which is
obvious,
because we may fix a smooth $D\in |\Oo_Q(2,2)|$ (thus $D$ is an elliptic curve and $h^0(Q,\Ii_D(1,2)) =h^0(Q,\Oo_Q(-1,0))=0$)
and take as
$Y_2\cap Q$
$8$ points of $D$.
\end{example}

\begin{example}\label{3ma2.1}
Let $Y=Y_1\cup Y_2\subset \PP^3$ be a general union of two smooth curves of degree $4$ and genus $1$. We claim that $h^1(\Ii
_Y(4)) =1$ and $h^0(\Ii _Y(3)) =4$. Since $\chi (\Oo_Y(3))=32$, it is sufficient to prove that $h^0(\Ii _Y(4)) =4$.
We only prove the inequality $h^0(\Ii _Y(4)) \ge 4$, because the opposite inequality follows as in the proof of Example
\ref{3ma1.1}. Assume $h^0(\Ii _Y(4))\le 3$, i.e. assume that $|\Ii _Y(4)|$ is a projective space of dimension $\le 2$. This is
impossible because a subset $\Bb$ of it (isomorphic to $\PP^1\times \PP^1$) is formed by the quartic surfaces $Q_1\cup Q_2$
with
$Q_i\in |\Ii_{Y_i}(2)|$ and $\bb$ isomorphic to $\PP^1\times \PP^1$. 
\end{example}

\begin{example}\label{3ma2.2}
Let $Y=Y_1\cup Y_\cup Y_3$ be a general element of $Z(3;3;4,1;2,0;2,0)$. 

\quad {\bf Claim:} We have $h^0(\Ii_Y(4)) =2$, $h^1(\Ii _Y(4)) =1$ and all elements of $|\Ii _Y(4)|$ are union of a quadric
containing $Y_1$, the plane containing $Y_2$ and the plane containing $Y_3$.

\quad {\bf Proof of the Claim:} Let $H$ (resp. $M$) be the plane spanned by $Y_2$ (resp. $Y_3$). Since $\chi (\Oo _Y(4)) =34$,
we have
$h^1(\Ii _Y(4)) =h^0(\Ii_Y(4))-1$. Since $h^0(\Ii _{Y_1}(2))=2$ and $h^0(\Ii _{Y_i}(1)) =1$, $i=0,1$, we have
$h^0(\Ii_Y(4))\ge 2$. Fix $W\in |\Ii _Y(4)|$. To conclude the proof of the claim it is sufficient to prove that $H\cup M\subset
W$.
$Y\cap H$ is the union of $Y_2$, $2$ general points $Y_3\cap H$ and $4$ general points (Theorem \ref{l5}). Thus $h^0(H,\Ii
_{Y\cup H}(4))=0$, i.e. $H\subset W$. In the same way we get $M\subset W$.
\end{example}

\begin{lemma}\label{3ma2}
Fix integers $s\ge 1$, $d_i>0$ and $g_i\ge 0$, $1\le i\le s$, such that $d_i\ge g_i+3$ if $g_i>0$.  For all $i<j$ assume
$g_i\ge g_j$ and $d_i\ge d_j$ if $g_i=g_j$. Take a general $Y\in
Z(3;s;d_1,g_1;\dots ;d_s,g_s)$. Set $e = 35-\sum _{i=1}^{s} (4d_i+1-g_i)$ and assume $e\ge 0$. We have $h^0(\Ii_Y(4)) =\max
\{e,0\}$ and $h^1(\Ii _Y(4)) = \max \{0,-e\}$ if and only if $(s;d_1,g_1;\dots ;d_s,g_s)$ is not in one of the following
exceptional cases:
\begin{enumerate}
\item $g_i=0$ for all $i$ and either $s=2$, $d_1=4$ and $d_2=2$ or $s=3$ and $d_1=d_2=d_3=2$;
\item $s=2$ and $(d_1,g_1,d_2,g_2)\in \{(5,2,4,1),(4,1,4,1)\}$.
\end{enumerate}
\end{lemma}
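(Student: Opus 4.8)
The plan is to reduce both equalities to a single vanishing statement and then treat the two implications separately. Since $Y$ is smooth with $h^1(\Oo_Y(1))=0$, Remark \ref{nn3} gives $h^1(\Oo_Y(4))=0$ and $h^i(\Ii_Y(4))=0$ for $i\ge 2$, so
\[
h^0(\Ii_Y(4))-h^1(\Ii_Y(4))=\chi(\Ii_Y(4))=\binom{7}{3}-\sum_{i=1}^s(4d_i+1-g_i)=e.
\]
As $e\ge 0$, the two asserted equalities $h^0(\Ii_Y(4))=\max\{e,0\}=e$ and $h^1(\Ii_Y(4))=\max\{0,-e\}=0$ are each equivalent to the single condition $h^1(\Ii_Y(4))=0$. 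Thus the lemma becomes: a general $Y$ satisfies $h^1(\Ii_Y(4))=0$ if and only if the numerical datum is not exceptional. The ``only if'' direction is exactly the content of Examples \ref{3ma2.0}, \ref{3ma2.1} and \ref{3ma2.2} together with Remark \ref{a3} for the case $g_i=0$: in each exceptional case a reducible quartic (or a forced quadric through a high-degree component) produces $h^0(\Ii_Y(4))>\max\{e,0\}$, hence $h^1(\Ii_Y(4))>0$. So the real work is the ``if'' direction.

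For the ``if'' direction I would argue by induction, the base cases being $s=1$ and $g_1=\cdots=g_s=0$. When $s=1$ a single non-special space curve has maximal rank (\cite{be4}), so $e\ge 0$ forces $h^1(\Ii_Y(4))=0$. When all $g_i=0$ the statement is \cite{b}, recorded in Remark \ref{a3}, where the only degree-$4$ failures compatible with $e\ge 0$ are the rational exceptional cases. By the semicontinuity theorem and the irreducibility of $Z(3;s;d_1,g_1;\dots;d_s,g_s)$ it then suffices, for each non-exceptional datum, to exhibit a single $X\in Z'(3;s;d_1,g_1;\dots;d_s,g_s)$ with $h^1(\Ii_X(4))=0$.

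The inductive engine removes genus one unit at a time. Assuming $g_1>0$, I would specialize $Y$ to the nodal curve $Y'\cup L$, where $Y'\in Z'(3;s;d_1-1,g_1-1;d_2,g_2;\dots;d_s,g_s)$ (still admissible, since $d_1-1\ge(g_1-1)+3$) and $L$ is a general secant line of the first component $Y'_1$; smoothing $Y'_1\cup L$ (Remarks \ref{smoo1} and \ref{ma6.0}) restores the degree and genus of $Y$, and a direct count gives $\chi(\Ii_{Y'\cup L}(4))=\chi(\Ii_Y(4))=e$. By the inductive hypothesis $h^1(\Ii_{Y'}(4))=0$, so $h^0(\Ii_{Y'}(4))=e+3$; imposing vanishing along a general secant line of $Y'_1$ cuts down $H^0$ by the $3=h^0(\Oo_L(4))-\#(L\cap Y')$ expected conditions, and since the secant lines of $Y'_1$ sweep out $\PP^3$ these conditions are independent by a genericity argument in the spirit of Lemmas \ref{k+2} and \ref{rnn0}. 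Hence $h^0(\Ii_{Y'\cup L}(4))=e$ and $h^1(\Ii_{Y'\cup L}(4))=0$. Iterating reduces every datum with $g_1>0$ to the all-rational base case, modulo the finitely many configurations in which the reduced datum $(d_1-1,g_1-1;\dots)$ is itself exceptional, which must be handled by a separate specialization.

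The direct treatment of those boundary configurations, and the verification that the exceptional list is \emph{complete}, is where I expect the main difficulty. Here I would use the residual sequence of a general plane $H$, namely $0\to\Ii_{\Res_H(X)}(3)\to\Ii_X(4)\to\Ii_{X\cap H,H}(4)\to 0$, specializing the components so that the part placed inside $H$ is a non-special plane curve or a bamboo of prescribed type (Remark \ref{smoo1}, Lemmas \ref{io1} and \ref{io2}), the remaining components meet $H$ transversally in prescribed general points (Theorem \ref{l5}, Remarks \ref{lv2} and \ref{lv4.1}), and $\Res_H(X)$ is a curve to which the $h^1(\Ii(3))=0$ half of Lemma \ref{3ma1} applies. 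The delicate point is to split the degrees between $H$ and its residual so that \emph{neither} the plane computation $h^1(H,\Ii_{X\cap H,H}(4))=0$ \emph{nor} the residual computation $h^1(\Ii_{\Res_H(X)}(3))=0$ runs into an exception of Lemma \ref{3ma1} or of the $\PP^2$ postulation of points and plane curves. When a quadric is forced (as for a degree-$5$, genus-$2$ component lying on a unique quadric, cf. Example \ref{3ma2.0}), I would instead use the residual sequence of a smooth quadric $Q$, controlling $X\cap Q$ by Proposition \ref{be1} and Remark \ref{lv2} and the degree-$2$ residual by Lemma \ref{k=2}. Checking that the genuinely unavoidable reducible-surface obstructions are exactly those recorded in items (1) and (2) is the crux of the asserted completeness.
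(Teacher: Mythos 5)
Your reduction of both equalities to the single vanishing $h^1(\Ii_Y(4))=0$, and your use of Remark \ref{a3} and Examples \ref{3ma2.0}, \ref{3ma2.1}, \ref{3ma2.2} for the ``only if'' direction, are fine and agree with the paper. The genuine gap is in your inductive engine. You claim that a general secant line $L$ of $Y'_1$ imposes $3$ independent conditions on $|\Ii_{Y'}(4)|$, ``by a genericity argument in the spirit of Lemmas \ref{k+2} and \ref{rnn0}.'' Those lemmas are statements about quadrics and they produce only \emph{one} (resp.\ two) conditions via a cone argument; they say nothing about surjectivity of the restriction map $H^0(\Ii_{Y'}(4))\to H^0(\Oo_L(4)(-L\cap Y'))$, which is what three independent conditions means. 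In fact this step is false, and the very lemma you are proving shows it. Take $(d_1,g_1;d_2,g_2)=(4,1;4,1)$, which is exceptional. Your reduction produces the datum $(4,1;3,0)$ (a twisted cubic $Y'_1$ plus an elliptic quartic $Y_2$), which is \emph{not} in the exceptional list, so your induction hypothesis gives $h^1(\Ii_{Y'}(4))=0$ and $h^0(\Ii_{Y'}(4))=6$. If a general secant line of $Y'_1$ imposed three independent conditions, you would get $h^0(\Ii_{Y'\cup L}(4))=3$, hence by semicontinuity $h^0(\Ii_Y(4))\le 3$ for a general $Y\in Z(3;2;4,1;4,1)$, contradicting Example \ref{3ma2.1}, which gives $h^0(\Ii_Y(4))=4$. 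The failure is structural: $h^0(\Ii_{Y'_1\cup L}(2))=2$ (Lemma \ref{k+2}) and $h^0(\Ii_{Y_2}(2))=2$, so the reducible quartics $Q_1\cup Q_2$ already sweep out a two-dimensional family inside $|\Ii_{Y'\cup L}(4)|$, forcing $h^0(\Ii_{Y'\cup L}(4))\ge 4$; the secant line imposes only two new conditions. The reducible-surface obstructions that generate the exceptional list survive your specialization, so the engine cannot distinguish exceptional data from non-exceptional ones and would prove false statements precisely where the lemma needs to be sharp.

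Once the engine is removed, what remains of your proposal is its last paragraph, which correctly names the tools (residual sequences of a plane $H$ or a smooth quadric $Q$, Theorem \ref{l5}, Remark \ref{lv2}, Proposition \ref{be1}, smoothing of nodal unions) but explicitly defers ``the crux.'' That deferred part is the entire proof: since $d_i\ge g_i+3$ and $e\ge 0$, only finitely many numerical data occur, and the paper treats them one by one (its steps (a1)--(a3) and (b1)--(b8)), in each case placing a component on a quadric in a prescribed class $|\Oo_Q(a,b)|$ or a conic/line in a plane, controlling $Y_1\cap Q$ as general points via $h^1(N_{Y_1}(-2))=0$ (Remark \ref{lv2}), and applying the appropriate residual exact sequence. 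Your proposal does not carry out any of these cases, so it does not establish the lemma.
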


\begin{proof}
We have $\binom{7}{3} =35$. By \cite{b,be4} we may assume $g_1>0$ and $s\ge 2$. Thus $d_1\ge g_1+3$. In some of the steps we
may add some restrictions on $H$ or $Q$. 

\quad (a) Assume $s=2$, $g_1>0$, $d_1\ge 3+g_1$, $g_2\ge 0$, $d_2\ge g_2+3$, $(d_1,g_1,d_2,g_2)\ne (5,2,4,1)$ and
$(d_1,g_1,d_2,g_2)\ne (4,1,4,1)$.
 Since
$d_i\ge g_i+3$ for all $i$, we get $3g_1+3g_2 \le 9$ and hence $g_1+g_2 \le 3$. Thus $d_1+d_2\le 9$ with equality allowed only
if
$g_1+g_2=3$ and hence $d_1=g_1+3$ and $d_2=g_1+3$.

\quad (a1) Assume $g_1+g_2=3$  and hence $d_i=g_i+3$ for all $i$ with $(g_1,g_2)\ne (2,1)$. Thus $g_1=3$ and $g_2=0$. Take as
$Y_2$ a smooth $Y_2\in|\Oo_Q(2,1)|$. Let $Y_1\subset \PP^3$ be a general smooth curve of degree $6$ and genus $3$. We have
$h^i(\Ii_{Y_2}(2))=0$, $i=0,1$. Set $Y:= Y_1\cup Y_2$. The residual exact sequence of $Q$ show that to prove $h^i(\Ii_Y(4))
=0$, $i=0,1$, it is sufficient to prove $h^i(Q,\Ii _{(Y_1\cap Q)\cup Y_2}(4,4)) =0$, i.e. $h^i(Q,\Ii _{Y_1\cap Q,Q}(2,3)) =0$.
This is true, because $h^1(N_{Y_1}(-2))=0$ (Remark \ref{lv2}).

\quad (a2) Assume $g_1=2$ and $g_2=0$. We have $4d_1+4d_2 \le 35$, i.e. $d_1+d_2\le 8$. Since
$d_i\ge g_i+3$ for all $i$, we get $d_1=5$ and $d_2=3$. Note that $e=3$.
Fix a plane $H$. Fix a general $Y_1\cup L\subset \PP^3$, where $Y_1\subset \PP^3$ is a smooth curve of degree
$5$ and genus $2$ and $L$ is a line. By Lemma \ref{4ma1} we have $h^1(\Ii_{Y_1\cup L}(3)) =0$.
Since $h^1(N_{Y_1}(-1)) =0$, we have $h^1(H,\Ii _{Y_1\cap H}(2)) =0$. Take a general smooth conic $D\subset H$
containing $L\cap D$. By Remark \ref{lv2} and the semicontinuity theorem it is sufficient to prove that $h^1(\Ii _{Y_1\cup
D\cup L}(4))=0$. Use the residual exact sequence of $H$.

\quad  (a3) Assume $g_1=1$ and $g_2=0$. We have $4d_1+4d_2\le 34$ and hence $7 \le d_1+d_2\le 8$. Thus $4\le d_1\le 5$.
We have $e = 2+(8-d_1-d_2)$.
Assume $(d_1,d_2)=(5,3)$. Fix a smooth $Y_2\in|\Oo_Q(2,1)|$ and then take a general $Y_1$ with $Y=Y_1\cup
Y_2\in Z(3;2;5,1;3,0)$.  We have $h^i(\Ii _{Y_1}(2))=0$, $i=0,1$ (\cite{be4}).
Using the residual exact sequence  of $Q$ we see
that it is sufficient to prove $h^1(Q,\Ii _{Y\cap Q}(4,4))=0$, i.e. $h^1(Q,\Ii _{Y_1\cap Q}(2,3))=0$. 
Since $h^0(\Ii_{Y_1}(2))=1$, $Y_1\cap Q$ is the intersection of $D$
with another
$D'\in |\Oo_Q(2,2)|$. Since $h^1(Q,\Oo_Q(0,1))=0$, the restriction map
$H^0(\Oo_Q(2,3))
\to H^0(\Oo_D(2,3))$ is surjective. Thus it is sufficient to prove $h^1(D,\Ii_{Y_1\cap Q}(2,3)) =0$. This is true, because
$D$ is an elliptic curve, $\deg (\Oo_D(2,3))=10$, $\#(Y_1\cap Q)=10$, and
$Y_1\cap Q$ may be taken as
$10$ general points of
$Q$ (Remark \ref{lv2}) and $h^0(\Ii_A(2,2))=2$ for a general $A\subset D$ with cardinality $10$.

Assume $(d_1,d_2)=(4,4)$. Take a general $Y=Y_1\cup Y_2\in Z(3;2;4,1;4,0)$. We have $h^1(\Ii_{Y_2}(2) =0$.
Take a smooth quadric $Q\supset Y_1$ with $Y_1\in |\Oo_Q(2,2)|$. Using the residual exact sequence of $Q$ we see
that it is sufficient to prove that $h^1(Q,\Ii _{Y\cap Q}(4,4))=0$, i.e. $h^1(Q,\Ii _{Y_2\cap Q}(2,2))=0$. This is true,
because we may take as $Y_2\cap Q$ $8$ general points of $Q$ (Remark \ref{lv2}).
 
The case $(d_1,d_2) =(4,3)$ is done as the previous
one.

\quad (b) Assume $s\ge 2$, $g_1>0$, $g_i=0$ and $1\le d_i\le 2$ for all $i>1$. We only do
all cases with $0\le e\le 4$, because the other ones follows from the case done either taking some of its connected components
or taking a line instead of a conic.

\quad (b1) Assume $g_1=3$ and $d_1=6$. Thus $h^i(\Ii _{Y_1}(2)) =0$, $i=0,1$, and $4d_1+1-g_1=22$. We have either $s=2$,
$d_2=2$ with $e=4$ or $s=3$ and $d_2=d_3=1$ with $e=3$. Let $Q$ be a general quadric. The set $Y_1\cap Q$ is a general union of
$12$ points of
$Q$ (Remark \ref{lv2}). If $s=2$ (resp. $s=3$) we add a general $Y_2\in |\Oo_Q(1,1)|$ (resp. general $Y_2,Y_3\in
|\Oo_Q(1,0)|$). Then we apply the residual exact sequence of $Q$.

\quad (b2) Assume $g_1=5$. Since $s\ge 2$, we have $s=2$, $d_1=8$, $d_2 =1$ and hence $e=2$. A general $Y_1$ satisfies
$h^i(\Ii _{Y_1}(3))=0$, $i=0,1$ (\cite{be4}) and $Y_1\cap H$ is a general union of $8$ points (Remark \ref{l5+}). We add a
general line
$Y_2\subset H$ and use the residual exact sequence of $H$.

\quad (b3) Assume $d_1\ge 8$ and $(d_1,g_1)\ne (8,5)$. Since $s\ge 2$ and $d_1\ge g_1+3$, we have $s=2$, $d_2=1$, $d_1=8$
and $3\le g_1\le 4$. Thus $e = g_1-3$. Take a general smooth curve $E\subset \PP^3$ with degree $6$ and genus $3$. We have
$h^i(\Ii _E(2))=0$, $i=0,1$. First assume $g_1=3$. We take a general quadric $Q$. Thus $E\cap Q$ is formed by $12$ general
points (Remark \ref{lv2}). Inside $Q$ we add  $3$ elements of $|\Oo_Q(1,0)|$, exactly two of them containing a point of
$E\cap Q$. Note hat $h^0(\Oo _Q(1,4))=10$. We apply  the residual exact sequence of $Q$ and then use the smoothing of the
union of $E$ and the two lines intersecting it. Now assume $g_1=4$.  Fix a general smooth curve $F$ of degree $5$ and genus
$2$ and a general line
$Y_2$. We have $h^1(\Ii _{F\cup Y_2}(3))=0$ (Lemma \ref{4ma1}) and hence $h^0(\Ii_{F\cup L}(3))=1$. Take a general plane $H$.
We take a smooth conic $C\subset H$ containing $3$ points of $E\cap H$ and no other point of $F\cup L$. For a general $L$ the
$3$ points $(F\cup L)\cap (H\setminus C)$ are not collinear and hence $h^i(H,\Ii _{C\cup F\cup L}(4)) =h^i(H,\Ii_{(F\cup
L)\cap (H\setminus C}(1))=0$, $i=0,1$. Apply the residual exact sequence of $H$ and then smooth $F\cup C$.

\quad (b4) Assume $d_1=7$ and $g_1=4$. Thus either $s=3$, $d_2=d_3=1$ and $e=0$ or $s=2$, $d_2=2$ and $e=1$ (assuming as always
$e\le 4$). First assume $s=3$. Let $A\subset \PP^3$ be a general curve of degree $6$ and genus $3$. Thus $h^i(\Ii _A(2)) =0$,
$i=0,1$. Fix a general secant line $L$ of $A$ and take a general quadric surface $Q\supset L$. Call $|\Oo_Q(1,0)|$ the ruling
of $Q$ containing $L$. Take general $R, R'\in |\Oo _Q(1,0)|$. Since
$A\cup L\in Z(3;1;7,4)$, it is sufficient to prove that $h^i(\Ii _{A\cup L\cup R\cup R'}(4))=0$, $i=0,1$. The residual exact
sequence of $Q$ shows that it is sufficient to prove $h^i(Q,\Ii _{Q\cap (A\cup L\cup R\cup R')}(4))=0$, $i=0,1$, i.e.
$h^i(Q,\Ii _{A\cap (Q\setminus L)}(1,4)) =0$. This is not an immediate consequence of Remark \ref{lv2}, because $Q\supset L$
and hence $Q$ is not general. We degenerate $A$ to the following nodal curve $A'$ such that $L$ is a line intersecting
transversally $A'$ at exactly $2$ points with $\dim A'\cap Q=0$. Thus it would be sufficient to prove
$h^1(Q,\Ii _{A'\cap (Q\setminus L}(1,4))=0$. Set $\{o,o'\}:= A\cap L$. Take a general line $M$ (resp. $M'$) through $o$ 
(resp. $o'$). Thus $M\cap M' =\emptyset$ and $L\cap (R\cup R') =\{o,o'\}$ (scheme-theoretically). We call $A'$ the union of
$M\cup M'$ and $4$ sufficiently general general lines $L_1$, $L_2$, $L_3$ and $L_4$ intersecting both $M$ and $M'$ (we need
$L_i\cap (R\cup R')=\emptyset$ for all $i$).
Remark \ref{smoo1} gives $A'\in Z'(3;1;6,3)$. Fix a general line $M''\subset \PP^3$. There is a unique $Q'\in |\Oo_{\PP^3}(2)|$
containing
$M\cup M'\cup M''$,
$Q'$ is a smooth and $E:= Q\cap Q'$ is a smooth element of $|\Oo_Q(2,2)|$. Hence $E$ is an elliptic curve. We take as $L-1$,
$L_2$,
$L_3$ and $L_4$ general lines intersecting each line $M$, $M'$ and $M''$. We get $8$ points of $E\setminus E\cap (R\cup E')$.
As in step (b8) below we see that $\Oo _E(1,0)\ncong \Oo _E(0,1)$ and that these $8$ points impose independent condition to
$\Oo_E(1,3)|$
and to $|\Oo _Q(1,3)|$. 

Now assume $s=2$. Fix a general secant line $L$ of $A$ and take a general plane $M$ containing $L$, a general plane $H$
and a general conic $D\subset H$. Since $D\cap M$ are two general points of $H\cap M$, we have $L\cap D=\emptyset$.
Thus $A\cup L\cup D\in Z'(3;2;7,4;2,0)$. Thus it is sufficient to prove that $h^0(\Ii _{A\cup L\cup D}(4)) =1$. Fix a general
$p\in M$. It is sufficient to prove that $h^0(\Ii _{A\cup L\cup D\cup \{p\}}(4)) =0$. Since $h^0(\Ii_A(2))=0$, it is
sufficient to prove that every $G\in |\Ii _{A\cup L\cup D\cup \{p\}}(4)|$ contains $H\cup M$. Since $G$ contains $D$ and
$A\cap H$ and $A\cap H$ are $6$ general points of $H$, $G$ contains $H$. Thus $G$ contains the line $H\cap M\ne L$. Since
$G$ contains $L$, $H\cap M$, $p$ and the $4$ general points $A\cap (M\setminus L)$, it contains $M$.

\quad (b5) Assume $d_1=7$ and $1\le g_1\le 3$. Thus $s=2$ and either $g_1=3$, $d_2=2$ and $e=0$ or $d_2 = 1$ and $e = g_1+1$.

First  assume $d_2=1$ and $g_1\ge 2$. Fix a line  $Y_2\subset H$. Let $Y_1\subset \PP^3$ be a general curve of
degree
$7$ and genus
$g_1$. Since
$g_1\ge 2$,
\cite{be4} gives $h^1(\Ii _{Y_1}(3))=0$. For a general $Y_1$ we have $Y_1\cap Y_2$. By Theorem \ref{aly1} $Y_1\cap H$ is a
general subset of $H$ with cardinality $7$. Thus $h^1(H,\Ii _{Y\cap H}(4)) = h^1(H,\Ii _{Y_1\cap H}(3)) =0$.
Now assume $g_1=1$. Let $A\subset \PP^3$ be a general smooth curve of degree $5$ and
genus
$1$. Thus
$h^i(\Ii _A(2))=0$, $i=0,1$ (\cite{be4}) and $A\cap Q$ is a general subset of $Q$ with cardinality $10$. Take $3$ distinct
elements $R, R, R'\in |\Oo _Q(1,0)|$ such that $R$ and $R'$ contain a point of $A\cap Q$. Since $A\cup R\cup R'\in
Z'(3;1;7,1)$, by semicontinuity it is sufficient to prove $h^1(\Ii_{A\cup R\cup R}(4)) =0$. Since $h^1(\Ii _A(2))=0$, the
residual exact sequence of $Q$ shows that it sufficient to prove $h^1(Q,\Ii _{(A\cup R\cup R'\cup R'')\cap Q}(4,4))=0$,
i.e. $h^1(Q,\Ii _{A\cap (Q\setminus (R\cup R'))}(1,4))=0$, which is true, because $A\cap (Q\setminus (R\cup R'))$ is a general
subset of $Q$ with cardinality $8$.

Now assume $d_2=2$ and hence $g_3=3$. Fix a general $A\in Z(3;1;6,3)$. Thus $h^i(\Ii _A(2))=0$, $i=0,1$. Fix a general line $L$
containing a point of $A$. 
Repeat the proof of  the case $s=2$ of step (e4) to get $h^0(\Ii _{A\cup L\cup D}(4))=0$ and hence $h^1(\Ii _{A\cup L\cup
D}(4))=0$.

\quad (b6) Assume $d_1=6$ and hence $1\le g_1\le 3$. Since $d_i\le 2$ for all $i$ and we assumed $e\le 4$ we have $s\ge 3$.
Since
$4d_1+1-g_1+3\cdot 5 >35$, we have $s=3$ and $d_2=d_3=1$. Thus $e=1+g_1$. Fix the plane $H$ and then take a general $Y_1\in
Z(3;1;6,g_1)$. By Theorem \ref{l5} $Y_1\cap H$ is a general subset of $H$ with cardinality
Let $o$ be a general element of $H$. We have $h^1(\Ii _{Y_1}(3))=0$, $h^0(\Ii _{Y_1}(3)) >0$ and $h^0(\Ii _{Y_1}(2))=0$
(\cite{be4}). Since $o$ is general in $H$, we get $h^1(\Ii _{Y_1\cup \{o\}}(3)) =0$. Let $L, L'\subset H$ two lines of $H$
through $o$ and disjoint from $Y_1\cap H$. Let $v\subset \PP^3$ be a general arrow of $\PP^3$ with $o$ as its reduction.
Since $W:= R\cup R'\cup v$ is a flat limit of a family of skew lines (\cite{hh0}), it is sufficient to prove
that $h^1(\Ii _{A\cup W}(4)) =0$. Since $\Res _H(A\cup W) = A\cup \{o\}$, the residual exact sequence of $H$ shows that it is
sufficient to observe that $h^1(H,\Ii _{(A\cup W)\cap H}(4)) = h^1(H,\Ii _{A\cap H}(2))=0$.

\quad (b7) Assume $d_1=5$ and hence $1\le g_1\le 2$. Either $s=3$ and $(d_2,d_3) =(2,1)$ with $e = g_1$ or $s=4$,
$d_2=d_3=d_4=1$ and $e=g_1-1$. 

\quad (b7.1) Assume $s=3$. Take a general $Y=Y_1\cup Y_2\cup Y_3$ and call $H$ the plane containing $Y_2$. By Lemma \ref{4ma1}
we have $h^1(\Ii _{Y_1\cup Y_3}(3)) =0$ and hence $h^0(\Ii _{Y_1\cup Y_3}(3)) =g_1$. Thus it is sufficient to prove that any
$G\in |\Ii_Y(4)|$ has $H$ as a component. Thus it is sufficient to observe that $h^0(\Ii _{(Y_1\cup Y_3)\cap H}(2))=0$,
because no $4$ points of $Y_1\cap H$ are collinear and $Y_3\cap H$ is a general point of $H$.

\quad (b7.2) Assume $s=4$. Fix a general $Y=Y_1\cup Y_2\cup Y_3\cup Y_4$ and call $Q$ the only quadric containing $Y_2\cup Y_3
\cup Y_4$. Since $\chi (\Oo _Y(4)) = 36-g_1$ and $h^0(\Ii _{Y_1}(2)) =g_1-1$ (\cite{be4}), it is sufficient to prove that any
$G\in |\Ii_Y(4)|$ has $Q$ as a component. $G\cap Q$ contains $Y_2\cup Y_3 \cup Y_4\cup (Y_1\cap Q)$. Thus it is sufficient to
prove
$h^0(Q,\Ii _{Y_1\cap Q}(4,1)) =0$. If $g_1=1$ it is sufficient to use Remark \ref{lv2}. Now assume $g_1=2$. Let $Q'$ be the
only quadric containing $Y_1$. For a general $Y_1$ $Q'$ is smooth, say with $Y_1\in |\Oo_{Q'}(3,2)|$. $Q'$ is a component of
$G$, because $A:= Q'\cap (Y_2\cup Y_3\cup Y_4)$ are $6$ general points of $Q'$ and hence $h^0(Q',\Ii _A(1,2))=0$.

\quad (b8) Assume $d_1=4$ and hence $g_1=1$. Either $s=3$, $d_2=d_3=2$ (but we excluded this case) or $s=4$,
$d_1=2$, $d_2=d_3 =1$ and
$e=0$. Take $s=4$. Let $Y_1\subset \PP^3$ be a smooth curve of degree $4$ and genus $1$. Fix two general points $o, o'\in
\PP^3\setminus Y_1$. We have $h^i(\Ii _{Y_1\cup \{o,o'\}}(2)) =0$, $i=0,1$. Let
$v, v'$ general arrows of $\PP^3$ with $o$ and $o'$ as their reduction. Let $Q\subset \PP^3$ be a general quadric containing
$\{o,o'\}$. $Q$ is smooth and the line spanned by $\{o,o'\}$ is not contained in $Q$. Take a general $D\in |\Oo_Q(1,1)|$
containing $\{o,o'\}$. $Y_2$ is a smooth conic. Call $R$ (resp. $R'$) the element of $|\Oo_Q(1,0)|$ containing $o$ (resp.
$o'$). Since $\Oo _Q(1,0)\cdot \Oo_Q(1,0)=0$ and $\Oo _Q(1,0)\cdot \Oo _Q(1,1) =1$ (intersection numbers), we have $R\cap
R'=\emptyset$ and $D\cup R\cup R'$ is nodal with $\{o,o'\}$ as its singular locus. For general $o$, $o'$ and $D$ we have
$Y_1\cap (D\cup R\cup R')=\emptyset$. Set
$W:= Y_1\cup D\cup R\cup R'\cup v\cup v'$. Since
$D\cup R\cup R'\cup v\cup v'\in Z'(3;3;2,0;1,0;1,0)$, by semicontinuity it is sufficient to prove that $h^i(\Ii _W(4))=0$,
$i=0,1$. Since $\Res _Q(W) =Y_1\cup \{o,o'\}$, the residual exact sequence of $Q$ shows that it is sufficient to prove that
$h^i(Q,\Ii _{W\cap Q}(4,4)) =0$, i.e. $h^i(Q,\Ii _{Y_1\cap Q}(1,3)) =0$, $i=0,1$. By Remark \ref{lv2} we may assume that
$Y_1\cap Q$ is an element of $|\Oo_E(2,2)|$, where $E$ is a general element of $|\Oo_Q(2,2)|$. $E$ is an elliptic curve. Since
$\deg (\Oo_E(1,3)) =10 = \#(Y_1\cap Q)$, it is sufficient to prove that $\Oo_D(2,2)\ncong \Oo_D(1,3)$, i.e. $\Oo _E(1,0)\ncong
\Oo_E(0,1)$. This is true  for the following reason. $Q\cong \PP^1\times
\PP^1$ and any embedding $j: E\to \PP^1\times \PP^1$ is obtained fixing two non-isomorphic degree $2$ line bundles $L_1$
and
$L_2$ and using $|L_1|$ to get the first component of $j$ and $|L_2|$ to get the second component of $j$. Since $j^\ast
(\Oo _Q(1,0))\cong L_1$ and $j^\ast (\Oo _Q(0,1)) \cong L_2$, we have $\Oo _{j(E)}(1,0)\ncong \Oo _{j(E)}(0,1)$. 
\end{proof}

Examples \ref{3ma1.1}, \ref{3ma2.0}, \ref{3ma2.1} \ref{3ma2.2} show that the omitted case in Lemmas \ref{3ma1} 
and \ref{3ma2} are exceptional cases.

\section{$n=4$}
This is the hardest part, because it is the starting case for the inductive proof on the dimension of the projective space. We
take a hyperplane $H\subset \PP^4$ and we need to the results on space curves listed or proved in section \ref{S3}.

\begin{remark}\label{lv1}
Fix integers $d$, $g$ in the Brill-Noether range for $\PP^4$, i.e. take $(d,g)\in \NN^2$ such that $5d \ge 4g+20$ and let
$Z(d,g)$ denote the irreducible component of $\mathrm{Hilb}(\PP^4)$ containing the curves with general moduli. Let
$H\subset \PP^4$ be a hyperplane. By
\cite[Corollary 2]{lv} for a general $S\subset H$ with $\# S = d$ there is a smooth $X\in Z(d,g)$ such that $X\cap H =
S$. In the range $d\ge g+4$ it is  sufficient to quote \cite{aly}. If $g=0$, this is obviously the same even when $d\le 3$.
\end{remark}

\begin{notation}\label{4ma1}
Fix $(a,q,g,d)\in \NN^4$ with $(4;d,g)$ admissible (i.e. either $g=0$ and $d>0$ or $g>0$ and $d\ge \max\{2g-1,g+4\}$)
and either $(a,q)=(0,0)$ or $(a,q)$ admissible (i.e. either $q=0$ and $a>0$ or $g>0$ and $a\ge \max \{2q-1,q+4\}$.
 We say that $(0,0)\prec (d,g)$ if and only if $g=0$.
Now assume $(a,q)\ne (0,0)$.  We say that $(a,q)\prec (d,g)$ if and only
if
$g\ge q$, $a\ge g-q+1$ and 
$d-a
\ge 2(g-q)$, except in the case $d-a=2$ in which we require that either $a\ge 4$ and $0\le g-q\le 2$ or $a\ge 3$ and $0\le
g-q\le 1$. We say that
$(a,q)\leq (d,g)$ if and only if either
$(a,q)\prec (d,g)$ or
$(a,q) =(0,0)$.
\end{notation}
Note that always $(0,0)\leq (d,g)$, while $(0,0)\prec (d,g)$ if and only if $g=0$.

\begin{notation}\label{4ma1+}
Let $\epsilon =(4;s;d_1,g_1;\dots ;d_s,g_s)$ be an admissible numerical set and let $\eta =(4;s;a_1,q_1;\dots;a_s,q_s)$ be a
generalized numerical set. We say that $\eta \prec \epsilon$ if and only if $(a_i,q_i)\prec (d_i,g_i)$ for all $i$.
We say that $\eta \leq \epsilon$ if and only if either $\eta \prec \epsilon$ or there is $i\in \{1,\dots ,s\}$ such that
$(a_j,q_j) =(d_j,g_j)$ for all $j\ne i$ and $(a_i,q_i)=(0,0)$. 
\end{notation}

The relation $\prec$ is not a partial ordering. For instance, $(1,0)\prec (4,0)\prec (5,1)$, but $(1,0)\nprec (5,1)$.
Thus the relation $\leq$ is not a partial ordering. Let $\Aa$ be a finite set of admissible generalized numerical sets. We say
that $\epsilon \in \Aa$ is \emph{maximal} among the elements of $\Aa$ for $\prec$ (resp. $\leq$) if there is no $\eta \in
\Aa\setminus \{\epsilon\}$ with $\epsilon \prec \eta$ (resp. $\epsilon \leq \eta$).

In the next observation we explain the geometrical interpretation of $\prec$ and $\leq$. Let $Y\subset H$ be a general smooth
curve of degree $a$ and genus $q$
\begin{remark}\label{4ma1.1}
Fix $(a,q,d,g)\in \NN^4$ such that $(a,q)\prec (d,g)$. Let $Y\subset \PP^4$ be a general smooth curve of degree $a$ and genus
$q$ with the convention $Y=\emptyset$ if $a=0$ (and hence $q=0$). If $(a,q)=0$ we have $g=0$ and we call $T\subset H$ a
general smooth rational curve of degree $d$. Now assume $(a,q)\ne (0,0)$. By Theorem
\ref{lv4} and Remark
\ref{lv4.1} the set
$Y\cap H$ is a general union of $a$ points. Assume for the moment $(d-a,g-q)\ne (2,3)$. In these cases there is a smooth
rational curve $T\subset H$ containing exactly $\min \{2(d-a),1+g-q\}$ points (Remarks \ref{l5+} and \ref{lv4.1}). Thus $Y\cup
T$ is a nodal curve of degree
$d$ and arithmetic genus $g$. By Remark \ref{smoo1} $Y\cup T$ is smoothable. Now assume $d-a=2$ and $g-q =3$. In this case we
take as
$T$ either the union of $2$ disjoint lines, each of them containing $2$ points of $Y\cap H$ (for this we need $\#Y\cap H\ge
4$) or a smooth conic containg $3$ points of $Y\cap H$. Thus
$Y\cup T$ is a smoothable nodal curve of degree
$d$ and arithmetic genus $g$ (Remark \ref{smoo1}).

In all cases by \cite{be4} the curve $T$ has maximal rank in $H$ and hence we may control the Hilbert function of $(Y\cap
H)\cup T$, because $(Y\cap H)\setminus (Y\cap T)$ may be considered a general subset of $H$ with cardinality $a -\#(Y\cap T)$
\end{remark}

Now we explain the geometric interpretation of $\prec$ and $\leq$ if $s>1$.

\begin{remark}\label{4ma2.1}
Fix a general $Y =Y_1\cup \cdots \cup Y_s\in Z(4;s;d_1,g_1;\cdots ;d_s,g_s)$ with $Y$
smooth and connected of degree
$d_i$ and genus $g_i$. 

First assume $\eta \prec \epsilon$. Call $T_i$ the curve called $T$ in Remark \ref{4ma1.1} for the pair $(a_i,q_i),(d_i,g_i))$
and set $T:= T_1\cup \cdots \cup T_s$. Thus
$Y_i\cup T_i$ is a nodal element of $Z'(4;1;d_i,g_i)$. We may find $T_i$ such that $T_i\cap Y_j =T_i\cap T_j$ for all $i\ne
j$. Thus $Y\cup T\in Z'(4;s;d_1,g_1;\dots ;d_s,g_s)$. We may control the postulation of $T\subset H$ using \cite{be4} and the
postulation of $Y\cap H$ using Theorem \ref{lv4} and Remark \ref{lv4.1}. We will control the postulation of $Y$ by an inductive
argument. Thus using the residual exact sequence of $H$ we will be able to control the postulation of
$Y\cup T$.

Now assume $\eta \leq \epsilon$, but $\eta \nprec \epsilon$. Fix $i\in \{1,\dots ,s\}$ such that $(a_j,q_j)=(d_j,q_j)$ for all
$j\ne i$. For $j\ne i$ set $T_j=\emptyset$. Let $T_i\subset H$ be the curve described in Remark \ref{4m1.1} for
$(a_i,q_i,d_i,g_i)$. We may find
$T_i$ such that
$T_i\cap Y_j =\emptyset$ for all $j\ne i$. Set $T:= T_i$, i.e. set $T:= T_1\cup \cdots \cup T_s$. We have $Y\cup T\in
Z'(4;s;d_1,g_1;\dots ;d_s,g_s)$. We control the postulation of $T\subset H$ using \cite{be4}.
\end{remark}

\begin{remark}\label{4m1.2}
Fix an integer $k\ge 3$ and an admissible $(d,g)\in \NN^2\setminus \{(0,0)\}$, i.e. either $g=0$ and $d>0$ or $d\ge
\max\{2g-1,g+4\}$. We take an admissible
$(a,q)\in
\NN^2\setminus (0,0)$ such that
$(a,q)\prec (d,g)$ and the ratio $(kd+1-g)/((k-1)a+1-q)$ is high (often as high as possible).

\quad (a) Assume $g=0$ and hence
$q=0$. In this case we take $a=1$ and get $(kd+1)/k$ which is $(k+1)/k$ if $d=1$ and $>2$ in all other cases. Thus in the
applications with numerical sets $\epsilon =(4;s;d_1,g_1;\dots ;d_s,g_s)$ with $s\ge 2$ we take $(0,0)$ instead of $(1,0)$ if
$(d,g)=(1,0)$, except if
$(d_i,g_i)=(1,0)$ for all $i=1,\dots ,s$ and in that case we with take $(a_1,q_1) =(1,0)$ and $(a_i,q_i) =0$ for all $i$.
Set $(4;s;a_1,q_1;\dots ;a_s,q_s)$. Note that $w_k(\epsilon) =s(k+1)$ and $w_{k-1}(\eta) =k <s(k+1)/2$.

\quad (b) Assume $1\le g\le 3$. We take $(a,q) =(g+1,0)$.. In this case we have
$(kd+1-g)/((k-1)g+k)
\ge ((k-1)g+4k+1)/((k-1)g+k) $, which is $5k/(2k-1)$ (resp. $(6k-1)/(3k-2)$, resp. $(7k-2)/(4k-3)$) for
$g=1$ (resp. $g=2$, resp. $g=3$). Thus in all cases $(kd+1-g)/((k-1)a+1-q) >2$.

\quad ({c}) Assume $g=4$ and hence $d\ge 8$. We take $(a,q)=(5,1)$. We have $(kd-3)/(5k-5) \ge (8k-3)/(5k-5)\ge 8/5$; if $k=6$
we have
$(kd-3)/(5k-5) \ge (8k-3)/(5k-5) = 9/5 > 5/3$; if $k=5$ we have $(8k-3)/(5k-5) =37/20 >9/5$. 

\quad (d) Assume $g=5$ and hence $d\ge 9$.
We take
$(a,q) =(5,1)$ and get
$(kd+1-g)/((k-1)a+1-q)
\ge (9k-4)/(5k-5) \ge 9/5$.

\quad (e)  Assume $g=6$ and $d\ge 11$. We take $(a,q)=(6,2)$  and get $(kd+1-g)/((k-1)a+1-q) \ge
(11k-5)/(6k-7)
\ge 11/6$; if $k=5$ we have $(11k-5)/(6k-7) =50/23>2$. 

\quad (f) Assume $g=7$ and hence $d\ge 13$.  We take $(a,q)=(6,2)$ 
and get
$(kd+1-g)/((k-1)a+1-q)
\ge (13k-6)/(6k-7)
\ge 13/6$.

\quad (g)  Assume $g=8$ and hence $d\ge 15$. We take $(a,q)=(7,3)$  and get $(kd+1-g)/((k-1)a+1-q) \ge (15k-7)/(7k-9) \ge
15/7$.

\quad (h) Assume $g\ge 9$ and hence $d\ge 2g-1$. We take $q =\lfloor g/2\rfloor$ and $a=2q-1$. Note that $a\ge q+4$.
We have  $(kd+1-g)/((k-1)a+1-q) \ge (k(2g-1)+1-g)/((k-1)(g+1)/2 +1 -(g+1)/2) = (k(4g-2)+2-2g)/(k(g-1)+1-2g) \ge 2$.

\quad (i) Summary: In all cases (except $(d,g) =(1,0)$, which we have explained in case (a)) we always have
$(kd+1-g)/((k-1)a+1-q)\ge 8/5$. We will use that $\binom{k+4}{4}/\binom{k+3}{4} = (k+4)/4 < 8/5$ for all $k\ge 7$.
Assume $k=6$; by step ({c}) in all cases  $(kd+1-g)/((k-1)a+1-q) \ge (k+4)/k =5/3$. Assume $k=5$; in all cases we have
$(kd+1-g)/((k-1)a+1-q) \ge 37/20>9/5$, while $5 >5/3$, while
$(k+4)/k = 9/5$.
\end{remark}

\begin{lemma}\label{qma2}
Let $\epsilon =(4;s;d_1,g_1;\dots ;d_s,g_s)$, $s\ge 2$, be an admissible numerical set with critical value $k\ge 5$ for
$\PP^4$. Let
$\Ff$ be the set of all admissible generalized numerical sets $\eta \prec \epsilon$ and with critical value $<k$.

\quad (i) $\Ff\ne \emptyset$.

\quad (ii)  Let $\eta =(4;s;a_1,q_1;\dots ;a_s,q_s)$ be a maximal element of $\Ff$. Set $a:= \binom{k-+3}{4} -\sum '
((k-1)a_i+1-q_i)$. Then $0\le a\le 2k-4$.
\end{lemma}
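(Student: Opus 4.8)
The plan is to read both parts off the combinatorics of the relation $\prec$ together with the ratio estimates assembled in Remark \ref{4m1.2}, using throughout the identity $\binom{k+4}{4}/\binom{k+3}{4}=(k+4)/k$ and the fact (Lemma \ref{nn1}) that a generalized numerical set has critical value $<k$ if and only if $w_{k-1}\le\binom{k+3}{4}$.

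For part (i) I would exhibit an explicit member of $\Ff$ by performing the component-wise replacements of Remark \ref{4m1.2}: for each $i$ replace $(d_i,g_i)$ by the pair $(a_i,q_i)\prec(d_i,g_i)$ prescribed there, taking $(a_i,q_i)=(0,0)$ for the degree-one genus-zero components, with the single proviso that one such component must retain $(1,0)$ in the degenerate case where $(d_j,g_j)=(1,0)$ for every $j$. By construction $\eta:=(4;s;a_1,q_1;\dots;a_s,q_s)\prec\epsilon$, and by the summary in Remark \ref{4m1.2}(i) every non-vanishing component satisfies $(kd_i+1-g_i)/((k-1)a_i+1-q_i)\ge (k+4)/k$ (via $37/20>9/5$ for $k=5$, $5/3$ for $k=6$, and $8/5>(k+4)/k$ for $k\ge7$). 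Summing over $i$, where the vanishing components contribute only to the left-hand side, gives $w_k(\epsilon)\ge\frac{k+4}{k}\,w_{k-1}(\eta)$, hence $w_{k-1}(\eta)\le\frac{k}{k+4}w_k(\epsilon)\le\frac{k}{k+4}\binom{k+4}{4}=\binom{k+3}{4}$, using $w_k(\epsilon)\le\binom{k+4}{4}$. Thus $\eta$ has critical value $<k$ and lies in $\Ff$, so $\Ff\ne\emptyset$; in the degenerate case $w_{k-1}(\eta)=k\le\binom{k+3}{4}$ and the conclusion is immediate.

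For part (ii) the inequality $a\ge0$ is just the membership $\eta\in\Ff$, since critical value $<k$ is equivalent to $w_{k-1}(\eta)\le\binom{k+3}{4}$. For the upper bound I would argue by maximality. First, $\eta\ne\epsilon$: otherwise $w_{k-1}(\eta)=w_{k-1}(\epsilon)>\binom{k+3}{4}$ (as $\epsilon$ has critical value $k$), contradicting $a\ge0$. Hence $\eta\prec\epsilon$ has at least one component with $(a_i,q_i)\prec(d_i,g_i)$ strict. The key step is a minimal-enlargement claim: such a component can be replaced by a strictly larger $(a_i',q_i')\prec(d_i,g_i)$ for which the increment $(k-1)(a_i'-a_i)-(q_i'-q_i)$ to $w_{k-1}$ is at most $2k-3$ — generically a degree-raising step (increment $k-1$), and in the worst case, when that is blocked by the exceptional clause of Notation \ref{4ma1}, the smallest permitted genus-raising step, which forces a degree increase of two and so contributes $2(k-1)-1=2k-3$. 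Writing $\eta'$ for the result, we have $\eta\prec\eta'\prec\epsilon$ and $w_{k-1}(\eta')\le w_{k-1}(\eta)+(2k-3)$. If $a\ge 2k-3$, i.e. $w_{k-1}(\eta)\le\binom{k+3}{4}-(2k-3)$, then $w_{k-1}(\eta')\le\binom{k+3}{4}$, so $\eta'\in\Ff$, contradicting maximality of $\eta$. Therefore $a\le 2k-4$, and the extremal genus-raising step is precisely what pins down the constant.

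The main obstacle is the minimal-enlargement claim itself, because $\prec$ is not a partial order and its behaviour in the small-gap regime — the degree-difference-two clause of Notation \ref{4ma1}, the boundary case $(d-a,g-q)=(2,3)$ of Remark \ref{4ma1.1}, and the degenerate components $(a_i,q_i)=(0,0)$ — must be checked by hand. I would organise this as a short case analysis on the genus gap $g_i-q_i$ and on whether the tentative degree-by-one step lands in the exceptional range, verifying in each case both that the proposed $(a_i',q_i')$ still satisfies $(a_i',q_i')\prec(d_i,g_i)$ and that the cheapest legal enlargement never exceeds $2k-3$; any cheaper steps only leave the bound non-tight, while the cases realising the increment $2k-3$ are exactly those that make $2k-4$ best possible.
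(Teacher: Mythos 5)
Your strategy is the paper's own. Part (i) reproduces the paper's argument essentially verbatim: componentwise substitutions taken from Remark \ref{4m1.2}, the ratio bounds $8/5$, $5/3$ and $37/20$ measured against $\binom{k+4}{4}/\binom{k+3}{4}=(k+4)/k$ for $k\ge 7$, $k=6$, $k=5$ respectively, and the degenerate all-rational case handled by keeping a single $(1,0)$. Part (ii) also has the paper's skeleton: non-negativity of $a$ from membership in $\Ff$, the observation $\eta\ne\epsilon$, and a contradiction to maximality via a cheapest legal enlargement, with the extremal cost $2k-3$ produced by a degree-two step.

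The gap is that what you call the ``minimal-enlargement claim'' and postpone as a case analysis ``to be checked by hand'' is not a side verification: it is the entire content of the paper's proof of (ii), which consists exactly of that case analysis ($g_i=q_i$ with $0<a_i<d_i$: step $(a_i+1,q_i)$, cost $k-1$; $(a_i,q_i)=(0,0)$: step $(1,0)$, cost $k$; $0<q_i<g_i$: steps $(a_i+1,q_i+1)$ or, when blocked, $(a_i+2,q_i+2)$, costs $k-2$ and $2k-4$; $q_i=0<g_i$ with $a_i\le 3$: e.g.\ $(3,0)\to(5,1)$, cost $2k-3$). Moreover, the claim as you state it is false: you require $\eta\prec\eta'$, but for the cheapest genus-raising step one has $(a_i,q_i)\nprec(a_i+1,q_i+1)$, because Notation \ref{4ma1} demands that the degree gap be at least twice the genus gap, and $d-a=1$ admits no exception clause. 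Two repairs are available, and one of them is what the paper implicitly does: either read ``maximal'' as maximizing $w_{k-1}$ (equivalently minimizing $a$), in which case one only needs $\eta'\prec\epsilon$, admissibility and critical value $<k$, every componentwise enlargement strictly increasing $w_{k-1}$ --- this is how the paper's contradictions actually operate --- or, if one insists on literal $\prec$-maximality, replace the $(a_i+1,q_i+1)$ step by $(a_i+2,q_i+1)$, which does satisfy $(a_i,q_i)\prec(a_i+2,q_i+1)$ by the $d-a=2$ exception clause (using $a_i\ge q_i+4$) and costs exactly $2k-3$, still compatible with the bound $a\le 2k-4$. Without one of these adjustments your maximality argument breaks precisely in the small-gap regime you yourself flagged as delicate.
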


\begin{proof}
Assume $\Ff=\emptyset$. By Lemma \ref{nn1} it is sufficient to find $\eta =(4;s;a_1,q_1;\dots ;a_s,q_s)\prec \epsilon$
such that $\sum ' ((k-1)a_i+1-q_i) \le \binom{k+3}{4}$. If $g_i=0$ for some $i$ we take $(a_i,q_i) =(0,0)$, except the case
$g_1=\cdots =g_s=0$, where we take $(a_1,q_1) =(1,0)$ and $(a_i,q_i)=(0,0)$ for all $i>1$ (case (a) of Remark \ref{4m1.2}). In
this case we have
$\sum ' (a_i+1-q_i)= k< \binom{k+3}{4}$. Now assume $g_i>0$. We take $(a_i,q_i)$ as given by Remark \ref{4m1.2} for the
integer $g:= g_i$. By part (e) of Remark \ref{4m1.2} we have $\sum ' (ai+1-q_i) \le \frac{8}{5}(\sum _i (kd_i+1-g_i)\le
\binom{k+4}{4}$ if $k\ge 7$. For $k=6$ (resp. $k=5$) we use  part (e) of Remark \ref{4m1.2} to handle the only case
$(d,g)=(8,4)$ (resp. the only cases $(d,g)\in \{(8,4),(9,5)\}$) in which we need a better inequality.

Thus $\Ff \ne \emptyset$. Since $\Ff$ is finite, it has at least one maximal element for $\prec$. Take $\eta$ and $a$ as in
part (ii). Assume $a\ge 2k+1$.

First assume $g_i=q_i$ for some $i$ and $0<a_i<d_i$. In this case the
generalized numerical set $\tau = (4;s;b_1,e_1;\dots ;b_s,e_s)$ with $(b_j,e_j) = (a_j,e_j)$ and $(b_i,e_i) =(a_i+1,e_i)$ has
critical value $\le k-1$ and $\tau \prec \epsilon$, a contradiction. 

Now assume the existence of $i\in \{1,2\}$ such that $(a_i,q_i)=(0,0)$. By the definition of $\prec$  for $n=4$ we have
$g_i=0$, Hence we get a new admissible numerical set $\prec \epsilon$  with critical value $2$ using $(1,0)$ instead of
$(0,0)$, contradicting the maximality of $\eta$.

Now assume the existence of an index $i$ such that
$0< q_i<g_i$. By the definition of admissibility we have $a_i\ge q_i+4$ and $d_i-a_i\ge g_i-q_i$. If $g_i=q_i+1$ we may
substitute $(a_i,q_i)$ with $(a_i+1,q_i+1)$ and get a contradiction to the assumption on $a$. 
The same works if $g_i\ge q_i+2$, unless $d_i=a_i+3$ and $g_i =q_i+3$. In this case we substitute $(a_i,q_i)$ with
$(a_i+2,q_i+2)$ and get a contradiction, because $a\ge 2k-3$.

Now assume the existence of $i\in \{1,2\}$ such that $q_i =0<g_i$. If $a_i\ge 4$ we may repeat the proof of the previous
case.
Now assume $a_i \le 3$. Since $(a_i,0)$ has $a_i$ attaching points, we have $g_i\le a_i-1$. If $g_i=1$ we may use
$(a_i+1,0)$, because $d_i\ge 5$ by the admissibility. If $g_i=2$ and hence $a_i=3$ and $d_i\ge 6$ we may use $(a_i+2,1)$
instead of $(a_i,0)$, giving a contradiction, because we assumed $a\ge 2k-3$.

Since $\eta \ne \epsilon$, there is $i\in \{1,\dots ,s\}$ such that $(a_i,q_i)\prec (d_i,g_i)$ and $(a_i,g_i)\ne (d_i,g_i)$,
while we obtained a contradiction for  all possible $(a_i,q_i)\ne (d_i,g_i)$.
\end{proof}

\begin{lemma}\label{pk3}
Theorem \ref{i1} is true for $n=4$ and all admissible $\epsilon$ with critical value $3$. Moreover, $h^0(\Ii _X(3))=0$ for a 
general
$Z(\tau)$ for any admissible
$\tau$ with critical value $>3$.
\end{lemma}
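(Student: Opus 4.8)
The plan is to prove Lemma~\ref{pk3} by the residual/induction strategy set up in Remark~\ref{nn2} and Remark~\ref{4ma2.1}, reducing everything on $\PP^4$ to the space-curve results of Section~\ref{S3} via a general hyperplane $H\cong\PP^3$. Fix an admissible $\epsilon=(4;s;d_1,g_1;\dots;d_s,g_s)$ with critical value $k=3$. By Remark~\ref{nn2}(a), since $k\ge 3$, it suffices to exhibit $X',X''\in Z'(\epsilon)$ with $h^0(\Ii_{X'}(2))=0$ and $h^1(\Ii_{X''}(3))=0$. The vanishing $h^0(\Ii_{X'}(2))=0$ is cheap: because $d_i\ge g_i+4$ whenever $g_i>0$ (admissibility), already $h^0(\Ii_X(2))=0$ for a general $X$ by the argument in Remark~\ref{nn2}(b) (a single nondegenerate component of degree $>$ the quadric count kills all quadrics), so the real content is producing $X''$ with $h^1(\Ii_{X''}(3))=0$, together with the ``moreover'' claim that $h^0(\Ii_X(3))=0$ for general $X\in Z(\tau)$ for every admissible $\tau$ of critical value $>3$.

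The core construction for $h^1(\Ii_{X''}(3))=0$ is the degeneration of Remark~\ref{4ma2.1}: I would write $X''=Y\cup T$, where $Y=Y_1\cup\cdots\cup Y_s$ is a general union of curves on $\PP^4$ whose degrees/genera correspond to some $\eta\prec\epsilon$, and $T=T_1\cup\cdots\cup T_s\subset H$ is a union of rational curves in the hyperplane chosen so that each $Y_i\cup T_i$ degenerates a component of degree $d_i$, genus $g_i$, and $Y\cup T\in Z'(\epsilon)$. Applying the residual exact sequence \eqref{eqhor1} with respect to $H$,
\begin{equation*}
0\to\Ii_{\Res_H(X'')}(2)\to\Ii_{X''}(3)\to\Ii_{X''\cap H,H}(3)\to 0,
\end{equation*}
reduces $h^1(\Ii_{X''}(3))=0$ to the two vanishings $h^1(\Ii_{\Res_H(X'')}(2))=0$ and $h^1(H,\Ii_{X''\cap H,H}(3))=0$. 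Here $\Res_H(X'')=Y$ and $X''\cap H=T\cup(Y\cap H)$, where $Y\cap H$ is a general collection of $d_1+\cdots+d_s$ points of $H$ (controlled by Theorem~\ref{lv4} and Remark~\ref{lv4.1}) and the postulation of $T\subset H\cong\PP^3$ is governed by \cite{be4}. The residual vanishing $h^1(\Ii_Y(2))=0$ holds because $\eta$ is chosen with critical value $<k=3$, i.e.\ critical value $2$, so it is exactly the content of Lemma~\ref{k=2}; the existence of a suitable $\eta\prec\epsilon$ of lower critical value is the analogue of Lemma~\ref{qma2}(i) for $k=3$, which one checks using the ratio estimates of Remark~\ref{4m1.2}.

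The hyperplane computation $h^1(H,\Ii_{T\cup(Y\cap H)}(3))=0$ is where I expect the main obstacle to sit, and it is precisely why Section~\ref{S3} was developed: $T$ is a forest of rational curves in $\PP^3$ and $Y\cap H$ is a general set of points, so this is a maximal-rank statement in $\PP^3$ at level $3$, to be handled by the tree/bamboo machinery (Lemmas~\ref{io1},~\ref{io2}, Remarks~\ref{t1},~\ref{t2}, and Proposition~\ref{be1}). The delicate point is a numerical/counting match: I must arrange the total degree of $T$ and the number of residual general points $Y\cap H$ so that the expected dimension $\chi(H,\Ii_{T\cup(Y\cap H),H}(3))$ comes out nonpositive and the configuration is ``balanced'' enough for the maximal-rank input to apply. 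Because $\binom{7}{3}=35$ is the relevant bound in $H=\PP^3$ for degree $3$, the admissible choices in Remark~\ref{4m1.2} are calibrated exactly so that the residual point count lands in a range covered by Lemmas~\ref{3ma1} and~\ref{3ma2}; a handful of small numerical sets will fall outside the generic estimate and must be treated by hand with the explicit configurations (secant lines, arrows as in Remarks~\ref{ma6.0} and~\ref{smoo1}, quadric-surface residuation) exactly as in the case analyses of those lemmas.

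For the ``moreover'' statement, that $h^0(\Ii_X(3))=0$ for a general $X\in Z(\tau)$ whenever $\tau$ is admissible with critical value $>3$, I would argue by monotonicity in the number and size of components: a numerical set of critical value $>3$ contains (after discarding or shrinking components) a sub-configuration of critical value exactly $3$ for which $h^0(\Ii(2))=0$ already, or more directly, one adds general secant lines and general points to a curve of critical value $3$ and invokes the degeneration/semicontinuity argument so that $h^0(\Ii_X(3))$ can only drop. Concretely, since $w_3(\tau)>\binom{7}{4}=35$ forces the degree to be large enough that $X$ imposes more than $\binom{7}{4}$ conditions on cubics, a general such $X$ lies on no cubic; the cleanest route is to degenerate $X$ to $Y\cup T$ as above and apply the same residual sequence \eqref{eqhor1} in degree $3$, using that the hyperplane part $T\cup(Y\cap H)$ already exhausts $H^0(\Oo_H(3))$ while $\Res_H=Y$ satisfies $h^0(\Ii_Y(2))=0$ by Lemma~\ref{k=2}, so $h^0(\Ii_{Y\cup T}(3))=0$ and hence by semicontinuity $h^0(\Ii_X(3))=0$ for general $X$.
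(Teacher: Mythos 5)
Your overall framework (degenerate $X$ to $Y\cup T$ with $T\subset H$, apply the residual exact sequence \eqref{eqhor1}, use Lemma~\ref{k=2} for the residual curve $Y$, and a maximal-rank statement in $H\cong\PP^3$ for the trace) is indeed the framework of the paper, but your proposal has a genuine gap: the entire content of Lemma~\ref{pk3} is the part you defer. The numerical engine you invoke --- ``the analogue of Lemma~\ref{qma2}(i) for $k=3$, which one checks using the ratio estimates of Remark~\ref{4m1.2}'' --- does not exist: Lemma~\ref{qma2} is stated and proved only for critical value $k\ge 5$, and the estimates of Remark~\ref{4m1.2} are too weak at $k=3$ (they guarantee ratios $\ge 8/5$, e.g.\ $21/10$ for $(d,g)=(8,4)$, whereas at $k=3$ one would need $\ge (k+4)/k=7/3$); this failure is precisely why Lemmas~\ref{pk3} and~\ref{pk4} exist as separate statements. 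What the paper actually does is show that critical value $3$ with $s\ge 2$ and some $g_i>0$ forces a very short list of shapes (either $s=3$ with $g_2>0$, which forces $(5,1;5,1;1,0)$; or $s=2$ with $d_2\ge 2$; or $d_i=1$ for all $i\ge 2$) and then gives an explicit construction in each case. Without this enumeration, or a proof that some $\eta\prec\epsilon$ of critical value $\le 2$ with $w_3(\epsilon)-w_2(\eta)\le 20$ always exists together with maximal rank of the resulting trace, your argument does not close. Two subsidiary errors: the relevant bound for the trace is $h^0(\Oo_H(3))=\binom{6}{3}=20$, not $\binom{7}{3}=35$ (that is the quartic count, so Lemma~\ref{3ma2} is irrelevant at level $3$; in fact the paper's proof of Lemma~\ref{pk3} uses neither Lemma~\ref{3ma1} nor the tree/bamboo machinery, only facts from \cite{b,be4} for small unions of rational curves and points); and your justification of $h^0(\Ii_{X'}(2))=0$ is wrong, since Remark~\ref{nn2}(b) concerns $h^0(\Ii_X(1))$ and a single nondegenerate component does not kill quadrics (a rational normal curve in $\PP^4$ lies on six of them). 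The correct argument, and the one the paper uses, is Lemma~\ref{k=2} combined with $w_2(\epsilon)>\binom{6}{4}=15$: if $h^1(\Ii_X(2))=0$ then $h^0(\Ii_X(2))=\chi(\Ii_X(2))<0$, absurd, so $h^0(\Ii_X(2))=0$.

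The ``moreover'' claim has a second gap. Your plan requires simultaneously $h^0(\Ii_Y(2))=0$ (hence $w_2(\eta)\ge 15$) and a trace killing all cubics of $H$ (hence $w_3(\tau)-w_2(\eta)\ge 20$ plus maximal rank of the trace); this is again an existence-plus-case check you do not carry out, and it is not automatic: taking $\eta=\tau$, i.e.\ no added curve, fails whenever $\deg X<20$, since then $X\cap H$ cannot exhaust $H^0(\Oo_H(3))$. The paper's step (d) argues differently: it removes a general secant line, replacing $(d_1,g_1)$ by $(d_1-1,g_1-1)$, and uses semicontinuity; the delicate point, absent from your sketch, is the case where the smaller curve $T$ satisfies $h^0(\Ii_T(3))=1$, where one must show the unique cubic does not contain the added secant line. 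The paper rules this out by observing that such a cubic would contain the whole secant variety of $T_1$, which is singular along the nondegenerate curve $T_1$, hence of degree $\ge 3$, and cannot also contain the general second component $T_2$. Without this (or an equivalent) argument, the reduction in your last paragraph does not go through.
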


\begin{proof}
With no loss of generality we may assume $g_i\ge g_j$ for all $i<j$ and $d_i\ge d_j$ for all $i<j$ such that $g_i=g_j$. By
\cite{be6} we may assume
$g_i>0$ for some
$i$. Thus
$g_1>0$. By \cite{be2} or \cite{be4} we may assume $s\ge 2$.

We have $\binom{6}{4}=15$, $\binom{7}{4} =35$ and $\binom{6}{3}=20$.
Fix $\epsilon =(4;s;d_1,g_1;\dots ;g_s)$ with critical value $3$. Lemma \ref{k=2}
gives $h^0(\Ii _X(2))=0$. Thus to prove the lemma for $\epsilon$ it is sufficient to prove that $h^1(\Ii _X(3))=0$.
Fix $i\in \{1,\dots ,s\}$ such that $d_i\ge g_i+4$. We have $3d_i+1-g_i \ge 2g_1+13$. Since $\binom{7}{4} =35$, either $s=2$
or $g_i=0$ and $d_i\le 3$ for all $i>2$.

\quad (a) Assume $s=3$ and $g_2>0$. Thus $g_1=g_2=1$, $d_1=d_2=5$, $g_3=0$ and $d_3=1$. Take a general $Y =Y_1\cup Y_2\in
Z(4;2;5,1;2,0)$. Lemma \ref{k=2} gives $h^i(\Ii _Y(2)) =0$, $i=0,1$. Take $T = T_1\cup T_2\in Z(H;2;3,0;1,0)$ with the only
restriction that $T_1$ contains the $2$ points of $H\cap Y_2$. Note that $Y\cup T\in Z'(4;3;5,1;5,1;1)$. We obviously have
$h^1(H,\Ii _{T,H}(3)) =0$ and $h^0(H,\Ii _{T,H}(3)) = 6$. Since $Y_1\cap H$ are $5$ general points of $H$, we have
$h^1(H,\Ii_{(Y\cap H)\cup T}(3)) =0$. The residual exact sequence of $H$ gives $h^1(\Ii _{Y\cup T}(3)) =0$.

\quad (b) Assume $s=2$ and $d_2\ge 2$. We start with $Y=Y_1\cup Y_2$ and we add $T=T_1\cup T_2$ with $\deg (T_1) = d_1-5$
and $\deg (T_2) = d_2-2$, with the convention $T_i=\emptyset$ if $\deg (T_i)=0$. We may cover in this way all
$(d_1,g_1,d_2,g_2)$.

\quad ({c}) Assume $s\ge 2$ and $d_2=1$. Thus $d_i=1$ for all $i>1$. Take a general $W =W_1\cup W_2\cup W_3\in
Z(4;3;4,0;1,0;1,0)$. If $1\le g_1\le 2$ we add in $H$ $s-2$ general line and a curve $Z\subset H$ with $\deg (Z) = d_1-4$ and
$p_a(W_1\cup Z) = g_1$. This is possible if $(d_1,g_1) =(5,1)$ (resp. $(d_1,g_1) =(6,2)$) taking as $Z$ the a line containing
$2$ points of
$W_1\cap H$ (resp. $2$ disjoint lines, each of them containing $2$ points of $W_1\cap H$). Taking instead of lines higher
degree smooth rational curves we get all $(d_1,g_1)$ with $1\le g_1\le 2$. If $g_1\ge 6$ we start with a general $E\in
Z(4;1;10,6)$
with $h^i(\Ii _E(2))=0$, $i=0,1$, and add in $H$ a suitable curve with lines as connected components except at most one.
If $3\le g_1\le 5$ we start with a general $F =F_1\cup F_2\in Z(4;2;7,3;1,0)$, which satisfies $h^i(\Ii _Y(2))=0$, $i=0,1$.

\quad (d) Fix $\eta =(4;s;a_1,q_1;\dots ;a_s,q_s)$ with critical value $>3$. With no loss of generality we may assume $q_i\ge
q_j$ for all $i<j$ and $a_i\ge a_j$ if $i<j$ and $q_i=q_j$. By
\cite{be2, be6} we may assume $a_1>0$ and $s\ge 2$. Set $\eta := (4;s;a_1-1,q_1-1;\dots a_s,q_s)$. Thus $w_k(\eta) =
w_k(\eta)$.
If $h^0(\Ii _T(3))=0$ for a general $T =T_1\cup \cdots \cup T_s\in Z(\eta)$, then $h^0(\Ii_{T\cup L}(3))=0$ for a general
secant line $L$ of $T$. Since $T\cup L\in Z'(\tau)$ (Remark \ref{smoo1}), we get the lemma for $\epsilon$ by semicontinuity.
Now assume $h^0(\Ii_T(3)) =1$, but assume $h^0(\Ii_{T\cup L}(3))>0$. Thus the only element $G$ of $|\Ii_T(3)|$ contains the
secant variety $\Sigma$ of $T_1$. Since $a_1-1\ge 4+q_1-1\ge 0$, $T_1$ is non-degenerate. Since $\Sigma$ is singular along
$T_!$, we have $\deg (\Sigma)\ge 3$. Since $s\ge 2$ and $T_2$ is general, we get a contradiction. Thus by the first part
it is sufficient to test all $\tau$ with $w_3(\eta)\le 33$. Such a $\tau$ has $w_3(\tau)=35$ and we tested it.
\end{proof}

\begin{lemma}\label{pk4}
Theorem \ref{i1} is true for all $\epsilon$ with critical value $4$ and $h^0(\Ii _X(4))=0$ for a  general $Z(\tau)$ for any
$\tau$ with critical value $>3$.

\end{lemma}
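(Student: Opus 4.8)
The plan is to prove the two assertions separately, both by degenerating $X$ to a union $Y\cup T$ with $Y\subset\PP^4$ general and $T\subset H$ for a fixed hyperplane $H\cong\PP^3$, and then reading off cohomology from the residual exact sequence \eqref{eqhor1}; the whole difficulty lies in the bookkeeping of numerical characters and in controlling the residual curve inside $H$. For the maximal rank assertion, fix an admissible $\epsilon=(4;s;d_1,g_1;\dots;d_s,g_s)$ of critical value $4$ and a general $X\in Z(\epsilon)$. By Remark \ref{nn2}(a) with $k=4$, the curve $X$ has maximal rank as soon as $h^0(\Ii_X(3))=0$ and $h^1(\Ii_X(4))=0$. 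The first vanishing is precisely the ``moreover'' clause of Lemma \ref{pk3}, since $\epsilon$ has critical value $4>3$. By the semicontinuity statement in Remark \ref{nn2}(a) it therefore suffices to exhibit a single $X''\in Z'(\epsilon)$ with $h^1(\Ii_{X''}(4))=0$.

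To build $X''$ I would first select, in the spirit of Lemma \ref{qma2} but for the value $k=4$, a maximal generalized numerical set $\eta=(4;s;a_1,q_1;\dots;a_s,q_s)\prec\epsilon$ whose critical value is $\le 3$. Such an $\eta$ exists: by Remark \ref{4m1.2} the prescribed pairs $(a_i,q_i)$ realise a ratio $(4d_i+1-g_i)/(3a_i+1-q_i)\ge 2$ in every case (the only exception $(d_i,g_i)=(1,0)$ being absorbed into $(0,0)$ as in part (a) there), so that $w_3(\eta)\le\frac{1}{2}w_4(\epsilon)\le 35=\binom{7}{4}$; taking $\eta$ maximal for $\prec$ keeps the residue $35-w_3(\eta)$ small. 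Following Remarks \ref{4ma1.1} and \ref{4ma2.1} I then set $X'':=Y\cup T$, where $Y=Y_1\cup\dots\cup Y_s\in Z(\eta)$ is general in $\PP^4$ and $T=T_1\cup\dots\cup T_s\subset H$ is the union of the rational curves $T_i\subset H$ completing each $(a_i,q_i)$ to $(d_i,g_i)$. Then $X''\in Z'(\epsilon)$ is smoothable, $\Res_H(X'')=Y$, and $X''\cap H=T\cup(Y\cap H)$.

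Feeding $X''$ into the residual sequence \eqref{eqhor1} of $H$,
$$0\to\Ii_Y(3)\to\Ii_{X''}(4)\to\Ii_{T\cup(Y\cap H),H}(4)\to 0,$$
reduces $h^1(\Ii_{X''}(4))=0$ to the two vanishings $h^1(\Ii_Y(3))=0$ and $h^1(H,\Ii_{T\cup(Y\cap H),H}(4))=0$. The first holds because $\eta$ has critical value $\le 3$, by Lemma \ref{pk3} (or the trivial lower cases together with Remarks \ref{k=1} and \ref{nn3}) applied to $Y$. For the second I would use that $Y\cap H$ is a set of general points of $H$ (Theorem \ref{lv4} and Remark \ref{lv4.1}), of which exactly the attaching nodes $Y_i\cap T_i$ lie on $T$ while the remaining ones are free; the rational union $T\subset\PP^3$ has controlled postulation by \cite{b}, and the free points impose independent conditions precisely because the budget $w_3(\eta)\le 35$ leaves enough room in $H^0(\Oo_H(4))$. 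When the configuration in $H$ is forced to have prescribed connected-component degrees I would instead invoke the forest constructions of Lemmas \ref{io1} and \ref{io2} and the explicit degree-$4$ computations of Lemma \ref{3ma2}.

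The second assertion is the mirror image at the level of sections: for an admissible $\tau$ of critical value exceeding $4$, so $w_4(\tau)>70$, I take the same degeneration $X''=Y\cup T\in Z'(\tau)$ but now choose $\eta\prec\tau$ of critical value $>3$, so that $h^0(\Ii_Y(3))=0$ by the ``moreover'' part of Lemma \ref{pk3}; arranging in addition $h^0(H,\Ii_{T\cup(Y\cap H),H}(4))=0$ (no quartic of $\PP^3$ contains the configuration), the sequence \eqref{eqhor1} forces $h^0(\Ii_{X''}(4))=0$, and semicontinuity yields $h^0(\Ii_X(4))=0$ for general $X\in Z(\tau)$. The main obstacle I expect is not the exact sequence but the combinatorics around $\eta$: one must produce, for $k=4$, a maximal $\eta\prec\epsilon$ of critical value $\le 3$ whose residue is small enough that the free points of $Y\cap H$ fit inside $H^0(\Ii_T(4))$, while simultaneously keeping the rational union $T\subset\PP^3$ away from the exceptional configurations of Remark \ref{a3} and treating the genuinely exceptional small numerical sets by hand via Lemmas \ref{3ma1} and \ref{3ma2}. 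In short, the residual machinery is routine once the $\PP^3$-postulation of $T\cup(Y\cap H)$ is understood, and securing that control in every boundary case is where the real work lies.
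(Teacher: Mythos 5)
You follow the same architecture as the paper --- degenerate $X$ to $Y\cup T$ with $T\subset H$, apply the residual exact sequence \eqref{eqhor1}, get $h^0(\Ii_X(3))=0$ from Lemma \ref{pk3}, conclude by semicontinuity --- but both pillars you rest it on break at $k=4$, exactly where the paper's proof departs from the $k\ge 5$ machinery. Your combinatorial step uses the $\PP^4$-relation $\prec$ of Notation \ref{4ma1}, i.e.\ \emph{rational} completing curves $T_i$, together with the claim that Remark \ref{4m1.2} yields ratios $(4d_i+1-g_i)/(3a_i+1-q_i)\ge 2$ in every case. That claim is false: for $(d_i,g_i)=(8,4)$ the ratio is $29/15<2$, while the threshold needed at level $k$ is $\binom{k+4}{4}/\binom{k+3}{4}=(k+4)/k$, which equals $2$ precisely at $k=4$; this is why Lemma \ref{qma2} is stated only for critical value $k\ge 5$. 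Worse, for some admissible $\epsilon$ of critical value $4$ there is \emph{no} $\eta\prec\epsilon$ of critical value $\le 3$ at all. Take $\epsilon=(4;2;8,4;9,5)$: it is admissible, $w_4(\epsilon)=61\le 70$ and $w_3(\epsilon)=44>35$, so its critical value is $4$; under Notation \ref{4ma1} the only predecessors of $(8,4)$ are $(8,4)$ and $(6,2)$ (contributing $21$ or $17$ to $w_3$) and the only predecessors of $(9,5)$ are $(9,5)$ and $(7,3)$ (contributing $23$ or $19$), so every $\eta\prec\epsilon$ has $w_3(\eta)\ge 36>35$, while zeroing one component via $\leq$ gives $w_3(\eta)\in\{21,23\}$, which fails on the trace side. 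This is exactly the obstruction that forces the paper to switch to $\ll$ (Definition \ref{dd1} read with $n=4$): then $(5,1)\ll(8,4)$ and $(6,2)\ll(9,5)$ give $w_3(\eta)=34$, but the completing curves $T_i\subset H$ may now have positive genus, so their degree-$4$ postulation in $\PP^3$ is no longer covered by \cite{b} and must be supplied by Lemma \ref{3ma2} together with its exceptional cases. Your plan keeps $T$ rational and invokes Lemma \ref{3ma2} only as a fallback for small cases, so it cannot reach such $\epsilon$.

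Second, your justification of the trace vanishing has the key inequality backwards. The scheme $T\cup (Y\cap (H\setminus T))$ imposes $w_4(\epsilon)-w_3(\eta)$ conditions on $H^0(\Oo_H(4))$, so $h^1(H,\Ii_{T\cup(Y\cap H),H}(4))=0$ requires $w_3(\eta)\ge w_4(\epsilon)-35$, a \emph{lower} bound on $w_3(\eta)$, in tension with the upper bound $w_3(\eta)\le 35$ needed for $h^1(\Ii_Y(3))=0$; ``$w_3(\eta)\le 35$ leaves enough room'' is not the relevant condition. In the paper's notation this is the dichotomy $\beta\le\alpha$ versus $\beta>\alpha$: when no $\eta$ lands in the window $[w_4(\epsilon)-35,\,35]$ (for $w_4(\epsilon)=70$ one needs $w_3(\eta)=35$ exactly), your construction cannot give $h^1(\Ii_{X''}(4))=0$ for any choice of $\eta$, and one needs the arrow/planar double-point degenerations of Assertion $R(n,k)$ and Lemma \ref{rnn12} --- which is precisely what the paper invokes (``the proofs of Lemmas \ref{ma2} and \ref{rnn12} work verbatim'') and your proposal omits. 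The same imbalance undercuts your sketch of the second assertion: $\eta$ of critical value $>3$ means $w_3(\eta)>35$, while killing all quartics on the trace needs $w_4(\tau)-w_3(\eta)\ge 35$; for $w_4(\tau)=71$ this forces $w_3(\eta)=36$ exactly, and you give no reason such an $\eta\prec\tau$ exists (the paper instead adapts step (d) of the proof of Lemma \ref{pk3}, or the balanced $R$-type configurations plus arrows of $\PP^4$ from its final section).
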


\begin{proof}
With no loss of generality we may assume $g_i\ge g_j$ for all $i<j$ and $d_i\ge d_j$ for all $i<j$ such that $g_i=g_j$. By
\cite{b} we may assume
$g_i>0$ for some
$i$. Thus
$g_1>0$. By \cite{be6} we may assume $s\ge 2$.

We have $\binom{7}{4}=\binom{7}{3}=35$ and  $\binom{8}{4} =70$.
We have $\binom{6}{4}=15$, $\binom{7}{4} =35$ and $\binom{6}{3}=20$.

Fix $\epsilon =(4;s;d_1,g_1;\dots ;d_s,g_s)$ with critical value $4$. Take a general $X\in Z(\epsilon)$. Lemma \ref{pk3}
gives $h^0(\Ii _X(3))=0$. Thus to prove the lemma for $\epsilon$ it is sufficient to prove that $h^1(\Ii _X(4))=0$.

For all generalized admissible numerical sets $\eta$ and $\theta$ in $\PP^4$ we write $\eta \ll
\theta$ if $\eta \prec \theta$ with the definition of $\prec$ given in Definition \ref{dd1} for $n\ge 5$, not the one
given for $\PP^4$. With this definition the proofs Lemmas \ref{ma2} and \ref{rnn12}  work verbatim if we control the
Hilbert function of the added curve $T\subset H$. We only need the case $k=4$ and we use Lemma \ref{3ma2} in $H$. We need to
justify that we never need one the $4$ exceptional cases. Since in each exceptional case, $T$, we have $h^1(H,\Ii_{T,H}(4))\le
2$ (Remark \ref{a3} and Examples \ref{3ma2.0}, \ref{3ma2.1} and \ref{3ma2.2}), it is sufficient to use that $\#Y\cap
(H\setminus T)\ge 3$.

 For $\eta =(4;s;a_1,q_1;\dots ;a_s,q_s)$ with critical value $>4$ we may adapt the proof just given or the one given in the
proof of Lemma \ref{pk3}.
\end{proof}

\begin{proof}[Proof of Theorem \ref{i1} in $\PP^4$]
Let $\epsilon =(4;s;d_1,g_1;\dots ;d_s,g_s)$ be an admissible numerical set and let $k$ be the critical value of $\epsilon$.
By Remark \ref{k=1} and Lemma \ref{k=2} to prove the theorem for $\epsilon$ we may assume $k\ge 3$. By Lemmas \ref{pk3} and
\ref{pk4}
we may assume $k\ge 5$. Le $X$ be a general element of $Z(\epsilon)$. We need to prove that $h^1(\Ii_X(k)) =0$ and that
$h^0(\Ii _X(k-1)) =0$.

\quad (a) In this step we prove that $h^1(\Ii_X(k)) =0$. Set
$\alpha:=
\binom{k+4}{4}-w_k(\epsilon)$. By the definition of critical value we have
$\alpha\ge 0$. Let
$\Ff$ be the set of all admissible generalized sets
$\eta
\prec
\epsilon$ with critical value
$<k$. By Lemma \ref{qma2} $\Ff\ne \emptyset$ and a maximal $\eta = (4;s;a_1,q_1;\dots ;a_s,q_s)\in \Ff$ satisfies
$0 \le \beta:= \binom{k+3}{4} -w_{k-1}(\eta)\le 2k-2$. The case $\alpha \ge \beta$ is done as in step (a1) of the proof of
Theorem \ref{i1} for $n>4$ given in the last section. Now assume $\beta < \alpha$. Instead of $T$ we first take a tree $J$ with
the same degree and $e:=\max \{\beta-\alpha,s-1\}$ good set of nodes, $S$. Since $\#S \le a_1+\cdots +a_s$, we may assume that
$S$ are general in $H$. Take $S'\subseteq S$ with $\#S' = \beta-\alpha$. For each $p\in S'$ let $v_p$ be a general arrow of
$\PP^4$ with $p$ as its reduction. Set $Z:=
\cup _{p\in S'} v_p$. Since each $v_p$ is general among the arrows of $\PP^4$ with $p$ as its reduction, $Z\cap H =S'$ and
$\Res _H(Z)=S'$. By Remark \ref{ma6.0} the curve $J\cup Z$ is a flat limits of a family of $\beta -\alpha+1$ disjoint
rational curves with prescribed degrees. Note that $(J\cup Z)\cap H= J$ (scheme-theoretically) and $\Res _H(J\cup Z) =S'$
(scheme-theoretically). We need that these curves passes though enough general points of $H$ to pass through the points of
$Y\cap H$ used in step (a) to get a curve $T\subset H$ such that $Y\cup T\in Z'(4;s;d_1,g_1;\dots ;d_s,g_s)$. This is true by
Remark \ref{l5+}. The curve $Y\cup J\cup Z$ satisfies $h^1(\Ii _{Y\cup J\cup Z}(k))=0$ by the residual exact sequence of $H$,
the assumption $h^1(\Ii _{Y\cup S'}(k-1)) =0$ and \cite[Lemma 2.1]{be7}.

 To prove that $h^0(\Ii _X(k-1)) =0$ we may assume $k\ge 6$ by Lemmas \ref{pk3} and \ref{pk4} and again
mimic the proof of the case $n>4$.
\end{proof}

\section{$n\ge 5$}
Now we assume $n\ge 5$ and that Theorem \ref{i1} is true in $H= \PP^{n-1}$.

\begin{definition}\label{dd1}
Let $\epsilon =(n;s;d_1,g_1;\dots ;d_s,g_s)$ and $\eta (n;s;a_1,q_1;\dots ;a_s,q_s)$ two admissible generalized
numerical sets. We say that
$\eta\prec \epsilon$ if the following conditions are satisfied:
\begin{enumerate}
\item $q_i\le g_i$ for all $i$;
\item $0\le a_i\le d_i$ if $g_i=0$;
\item assume $g_i>0$; then either $q_i=0$ and $a_i=0$ or $q_i>0$ and $0\le a_i-\max\{2q_i-1,n+q_i\} \le
d_i-\max\{2g_i-1,n+g_i\}$ or $d_i=2g_i-1>g_i+n$, $q_i=0$ and $2\le a_i\le n$.
\end{enumerate}
\end{definition}

\begin{remark}\label{ma00}
Fix $b\ge 2$ distinct points $p_1,\dots ,p_b \in \PP^m$, $m\ge 3$, and a finite set $B\subset \PP^m\setminus \{p_1,\dots
,p_b\}$ such that $\{p_1,\dots, p_b\}\cup B$ is in linearly general position in $\PP^m$, i.e. any $k\le m+1$ points of 
of $\{p_1,\dots, p_b\}\cup B$ are linear independent. The line $L_1$ spanned by $p_1,p_2$ contains no point of $\{p_3,\dots
,p_b\}\cup B$. If $b\ge 3$ there is a line $L_2$ containing $b_3$ and a point of $L_1$ and with $L_2\cap (B\cup
\{p_1,p_2,p_4,\dots ,p_b\})=\emptyset$. If $b\ge 4$ there is a line $L_3$ containing $b_4$ and a point of $L_2$ and with
$L_2\cap (B\cup
\{p_1,p_2,p_3,p_5,\dots ,p_b\})=\emptyset$. In this way we get a connected nodal curve $L_1\cup \cdots \cup L_{b-1}$
of degree $b-1$ and arithmetic genus $0$ such that $\{p_1,\dots ,p_b\}\subset L_1\cup \cdots \cup L_{b-1}$
and $(L_1\cup \cdots \cup L_{b-1})\cap B=\emptyset$. Smoothing these curves for all integers $d\ge x-1>0$ we may obtain a
smooth rational curve
$T\subset
\PP^m$ with degree $d$ containing $x+1$ points of $\PP^m$ in linear general position.
\end{remark}

The following observation explains the reason for Definition \ref{dd1} for $n\ge 5$.

\begin{remark}\label{ma4}
Fix admissible numerical sets $(n;1;d,g)$ and $(n;1;a,q)$ such that $(n;1;a,q) \prec (n;1;d,g)$ and $(a,q)\ne (d,g)$. Fix a
general
$Y\in Z(n;1;a,q)$. In particular $Y$ is transversal to $H$. We want to find an admissible (i.e. with degree and genera
admissible for $\PP^{n-1}$)
$T\subset H$ such that
$Y\cup T \in Z'(n;1;d,g)$. 
$Y\cup T \in Z'(n;1;d,g)$. Note that this is always possible in the omitted case $a=q=0$, because in $\PP^n$ we require that
either $g=0$ or $d\ge \max \{2g-1,g+n\}$, while in $\PP^{n-1}$ we require that either $g=0$ or $d\ge \max \{2g-1,g+n-1\}$.

\quad (a) Assume $2q-1\ge q+n$, i.e. $q\ge n+1$. Thus $2g-1\ge g+n$. Write $a = 2q-1+f$ and $d = 2g-1+e$ with $e\ge
f\ge 0$. Thus $d-a=\deg (T)=e-f$ and $d-a\ge 2(g-q)$.

\quad (a1) Assume $a\ge d-a-1$. In this case by Remark \ref{ma00} we may take as $T$ a smooth
rational curve containing exactly $g-q+1$  points of $Y\cap H$ even if $Y\cap H$ does not contain $g-q+1$ general points of
$H$ (Remark \ref{ma00} only requires that the points of $Y\cap H$ are in linear general position.

\quad (a2) Assume $a\le d-a-2$. If $d-a\ge g-q+n-1$ we may take as $T$ a smooth curve of degree $d-a$ and genus $g-q$
containing exactly one point of $Y\cap H$. Assume $d-a\le g-q+n-2$. Since $d-a\ge 2(g-q)$, we have $n-2\ge g-q$. Thus $d-a\le
2n-4$ and hence $a\le 2n-4$. By Remark \ref{aly10} to find $T$ containing all points of $Y\cap H$ and with $\deg
(T)=d-a$ and
$p_a(T) =g-q-a+1$ it is sufficient to have $d-a\ge (g-q-a+1)+n-1$ and $d-a\ge 2(g-q-a+1)-1$. Obviously $d-a\ge g-q-a+1+n-1$,
while $d-a\ge 2(g-q-a+1)-1$, because $a>0$ and $d-a\ge 2(g-q)$.

\quad (b) Assume $2g-1\ge g+n$ and $2q-1\le q+n-1$, i.e. $g\ge n+1 \ge q+2$. Since $d-2g+1 \ge a-q-n$, we have $d-a \ge
2g-q-n-1$. By Condition (3) in Definition \ref{dd1} we have $q>0$ and hence $a\ge q+n\ge n+1$. We take $T$ of genus $g-q-n$
containing  $n+1$ points of $Y\cap H$ (to find $T$ containing these points we use that any two $(n+1)$-ples of points of $H$ in
linear general position are projectively equivalent). We have $d-a\ge 2g-q-n-1 \ge 2(g-q-n)-1$. We have $d-a\ge 2g-q-n-1 \ge
(g-q-n)+n$, because $g\ge n+1$.

\quad({c}) Assume $2g\le g+n$, i.e. $g\le n$. By assumption $d-a\ge g-q$.  If $a\ge g-q+1$ we may take as $T$ a smooth
rational curve of degree $d-a$ containing $g-q+1$ points of $Y\cap H$ (Remark \ref{ma00}). Assume $a\le g-q$. Since $g\le n$
and $(n;1;a,q)$ is admissible, $q=0$. This is excluded by Condition (3) in Definition \ref{dd1}.
\end{remark}

\begin{definition}\label{ma3.00}
Let $\Aa$ be the set of all quadruples $(a,q,d,g)$ such that either $a=q=0$ or $(n;1;a,q)$ is admissible in $\PP^n$,
either $(d,g)=(0,0)$ or $(n;1;d,g)$ is admissible in $\PP^n$, $(a,q) =(0,0)$ if $(d,g)=(0,0)$ and $(n;1;a,q)\prec (n;1;d,g)$
if
$(a,q)\ne (0,0)$. For each $(a,q,d,g)\in \NN^4$ we define the following non-empty subset $\tau(a,q,d,g)\subset \NN^2$.
Set $\tau (0,0,d,g):= \{(d,g)\}$. If $(a,q) \ne (0,0)$ let $\tau (a,q,d,g)\subset \NN^2$ be the set of all $(b,e)\in \NN^2$
with the following properties. If $(a,q) =(d,g)$, set $\tau (a,q,d,g) =\{d,g)\}$. In all other cases each $(b,e)\in \tau
(a,q,d,g)$ has $b=d-a$, $0\le e\le g-q$ and $(n-1;1;d-a,e)$ admissible in $\PP^{n-1}$. We have $(d-a,e)\in \tau (a,q,d,g)$ if
and only if
$0\le e\le g-q$,
$g-q+1 \le \min \{a/2,(d-a)/2\}$ and $(n-1;1;d-a,e)$ admissible in $\PP^{n-1}$. Remark \ref{ma4} shows that $\tau (a,q,d,g)
\ne \emptyset$.
\end{definition}

\begin{remark}\label{ma7}
Let $\epsilon =(n;s;d_1,g_1;\cdots ;d_s,g_s)$ be an admissible  numerical set and let $\eta =(n;s;a_1,q_1;\dots ;a_s,q_s)$ be
an admissible generalized numerical set such that $\eta \prec \epsilon$. Fix a general $Y=Y_1\cup \cdots \cup Y_s\in
Z(n;s;a_1,q_1;\dots ;a_s,q_s)$ with
$Y_i$ of degree $d_i$ and genus $g_i$. Since $Y$ is general, $Y$ is transversal to $H$ and we may apply Remark \ref{ma4} to
each
$Y_i\cap H$. We saw in Remark \ref{ma4} the existence of $T' =T'_1\cup \cdots \cup T'_s\subset H$ with each $T_i$ smooth of
degree
$d_i-a_i$ and some genus
$b_i$ such that $\#(Y_i\cap T_i) = 1+g_i-q_i-b_i$. Thus $Y_i\cup T'_i\in Z'(n;1;d_i,g_i)$. For a general
$Y$ we may move all components $Y_j\ne Y_i$ keeping fixed $Y_i$. Thus we may find $T'$ such that $Y_i\cap T'_j=\emptyset$ for
all $i\ne j$. Thus
$Y\cup T' \in Z'(n;s;d_1,g_1;\dots ;d_s,g_s)$.
\end{remark}

Consider the following Assertion $R(n,k)$, $n\ge 4$, $k\ge 2$:

\quad {\bf Assertion} $R(n,k)$, $n\ge 4$, $k\ge 2\ :$ Fix an admissible numerical set $\epsilon =(n;s;d_1,g_1;\dots
;d_s,g_s)$, $s\ge 2$, such that $\binom{n+k}{n}\le w_k(\epsilon) \le \binom{n+k}{n}+2k$ and set $e:=
w_k(\epsilon)-\binom{n+k}{k}$. Fix a general
$S\subset  H$. There are $Y_i\in Z(n;1;d_i,g_i)$ such that $W:= Y_1\cup \cdots \cup Y_s$ is nodal, $\mathrm{Sing}(W) =S$,
 each $Y_i$ is smooth and $h^i(\Ii _W(k))=0$, $i=0,1$.

\begin{lemma}\label{rnn1}
$R(n,2)$ is true.
\end{lemma}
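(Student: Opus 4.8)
The plan is to reduce the statement to the single vanishing $h^0(\Ii_W(2))=0$. Because each $Y_i$ is smooth and non-special, $\chi(\Oo_{Y_i}(2))=2d_i+1-g_i$, and for a nodal $W$ with $\mathrm{Sing}(W)=S$ the normalisation sequence gives $\chi(\Oo_W(2))=w_2(\epsilon)-\#S$, hence $\chi(\Ii_W(2))=\#S-e$. Thus the requirement $h^0(\Ii_W(2))=h^1(\Ii_W(2))=0$ forces $\#S=e$, and conversely, once $\#S=e$ it suffices to show $h^0(\Ii_W(2))=0$. Since $W$ is reduced, $\Ii_W$ depends only on the set $Y_1\cup\cdots\cup Y_s$, so $h^0(\Ii_W(2))$ is the dimension of the space of quadrics through all the components; the $e$ nodes at $S$ play no role for this $h^0$ and serve only to bring $\chi(\Ii_W(2))$ down to $0$. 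For $e=0$ there are no nodes and the claim is exactly Lemma \ref{k=2}.

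Assume $e\ge 1$. By Lemma \ref{k=2}, a general \emph{disjoint} union of the $Y_i$ has $h^0(\Ii(2))=0$, since its Euler characteristic $-e$ is negative, forcing $h^1>0$ and hence $h^0=0$. The real task is therefore to realise components of the prescribed types meeting nodally at the given general points $S\subset H$ while keeping no quadric through their union. To control this special position I would degenerate each $Y_i$ with $g_i>0$: the admissibility bound $d_i\ge g_i+n$ gives $\deg E_i=d_i-g_i\ge n$, so a general rational curve $E_i$ of this degree is non-degenerate, and replacing $Y_i$ by $E_i$ together with $g_i$ general secant lines yields, after smoothing (Remarks \ref{smoo1} and \ref{ma6.0}), a flat degeneration $\wt W$ of $W$ whose components are rational curves and lines and which still carries the $e$ nodes at $S$. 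By semicontinuity it suffices to prove $h^0(\Ii_{\wt W}(2))=0$.

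Now I would build $\wt W$ by adding its components and tracking the space of quadrics: starting from $\cup E_i$ (for which Lemma \ref{k=2} again gives maximal rank) and attaching the secant lines one at a time, Lemma \ref{k+2} lowers $h^0$ by exactly $1$ at each positive step, the non-degeneracy of the $E_i$ excluding the exceptional cone alternative in that lemma, while Lemma \ref{rnn0} plays the same role for lines meeting the configuration only once. Since the number of lines is $\sum g_i$ and the nodes at $S$ impose no condition on a quadric containing the components, this drives $h^0$ to $0$. The step I expect to be the main obstacle is precisely the interaction between this reduction and the nodal incidences at $S$: one must verify that forcing the components into the special position meeting at the $e$ general points of $H$ never raises $h^0$ above the value for the disjoint union, equivalently that no single quadric can contain the whole configuration. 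This is where the hypotheses $n\ge 4$ and $d_i\ge g_i+n$ enter decisively, guaranteeing the non-degeneracy needed to rule out a common quadric cone, and it is exactly their failure that produces the exceptional $\PP^3$ cases of Remark \ref{a3}. Once $h^0(\Ii_{\wt W}(2))=0$ is secured, semicontinuity gives $h^0(\Ii_W(2))=0$, and $\chi(\Ii_W(2))=0$ then yields $h^1(\Ii_W(2))=0$.
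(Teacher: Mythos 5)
Your opening reduction matches the paper: since $W$ is reduced and its components non-special, $\chi(\Ii_W(2))=\#S-e$, so $\#S=e$ and only $h^0(\Ii_W(2))=0$ needs proof, and the case $e=0$ is exactly Lemma~\ref{k=2}. The genuine gap is in the case $e\ge 1$, and it is precisely the step you yourself flag as ``the main obstacle'' and then do not carry out: you never prove that the components can be forced to meet nodally at the \emph{prescribed} general points of $S\subset H$ without a quadric appearing through their union. Appealing to ``the non-degeneracy needed to rule out a common quadric cone'' is not an argument for this: the cone alternative is the easy part (it is already dispatched inside Lemmas~\ref{k+2} and~\ref{rnn0}); what needs proof is that each attached line still drops $h^0$ by the full amount when its attaching point is constrained to lie at a point of $S$. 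Neither lemma applies there as stated: Lemma~\ref{rnn0} requires the point $o$ to be a \emph{general} point of the curve $T$ and $L$ a general line through it, and Lemma~\ref{k+2} requires a \emph{general} secant line, whereas your attaching points lie in the fixed hyperplane $H$, indeed at the fixed set $S$ --- a nontrivial specialization. Since $h^0$ can only go up under specialization, the vanishing you get from Lemma~\ref{k=2} for the general (disjoint) union gives you nothing at the special position, which is why this step cannot be waved through.

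The paper closes exactly this hole with a trick your proposal is missing: because $e\le 2k=4<n+1$, any two sets of $e$ points (of $H$) in linear general position are projectively equivalent. Hence one never builds curves through the given $S$ at all: one creates the $e$ nodes by lines through \emph{general} points of already-constructed curves, where Lemma~\ref{rnn0} applies verbatim (each such line either drops $h^0$ by $2$ or kills it), then runs the proof of Lemma~\ref{k=2} for the rest of the configuration (rational curves plus secant lines, Lemma~\ref{k+2}), observes that the resulting nodes are in linear general position, and finally transports the whole configuration onto $S$ by a projectivity of $\PP^n$, which changes no cohomology. Your plan could be repaired either by inserting this projective-equivalence step, or by an explicit homogeneity argument showing that the pair (curve chosen generally through a point of $S$, that point) is projectively equivalent to (general curve, general point on it); as written, neither appears, so the crux of the lemma is left unproved. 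Two smaller inaccuracies: the ``exceptional cone alternative'' you attribute to Lemma~\ref{k+2} in fact belongs to Lemma~\ref{rnn0}; and your reduction tacitly uses $h^2(\Ii_W(2))=h^1(\Oo_W(2))=0$, and a smoothing of the components that keeps the nodes at $S$, which Remark~\ref{smoo1} by itself does not provide.
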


\begin{proof}
 Fix an admissible numerical set $\epsilon =(n;s;d_1,g_1;\dots ;d_s,g_s)$
such that $\binom{n+2}{n}\le w_2(\epsilon) \le \binom{n+2}{n}+4$. Set $e:= w_2(\epsilon)-\binom{n+2}{n}$. The case $e=0$
is true by Lemma \ref{k=2}. For $0<e\le 2k=4$ we use that ant two sets of $e$ points of $H$ in linear general position are
projectively equivalent and first we apply   Lemma \ref{rnn0} and then the proof of Lemma \ref{k=2}.
\end{proof}

\begin{lemma}\label{ma2}
Let $\epsilon =(n;s;d_1,g_1;\dots ;d_s,g_s)$, $s\ge 2$, be an admissible numerical set with critical value $k\ge 3$. Let
$\Ff$ be the set of all admissible generalized numerical sets $\eta \prec \epsilon$ and with critical value $<k$.

\quad (i) $\Ff\ne \emptyset$.

\quad (ii)  Let $\eta =(n;s;a_1,q_1;\dots ;a_s,q_s)$ be a maximal element of $\Ff$. Set $a:= \binom{n+k-1}{n} -\sum '
((k-1)a_i+1-q_i)$. Then $0\le a\le 2k$.
\end{lemma}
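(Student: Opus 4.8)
The plan is to follow the argument of Lemma~\ref{qma2} almost verbatim, replacing the $\PP^4$-relation of Notation~\ref{4ma1} by the relation $\prec$ of Definition~\ref{dd1} and the binomials $\binom{k+4}{4},\binom{k+3}{4}$ by $\binom{n+k}{n},\binom{n+k-1}{n}$; the new phenomenon to watch, and the eventual source of difficulty, is that for $n\ge 5$ a component with $g_i>0$ may legitimately be reduced to $(a_i,q_i)=(0,0)$, which was forbidden for $n=4$. For (i), by Lemma~\ref{nn1} it is enough to exhibit one $\eta\prec\epsilon$ with $w_{k-1}(\eta)\le\binom{n+k-1}{n}$, since then $\eta$ has critical value $\le k-1<k$. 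I would take the most drastic reduction: put $(a_i,q_i)=(0,0)$ for all $i$ (allowed by Definition~\ref{dd1}), except for a single index kept minimally non-empty, namely $(1,0)$ if its $g_i=0$ and otherwise $(n+1,1)$. Then $w_{k-1}(\eta)$ is $k$ or $(k-1)(n+1)$, and the elementary inequality $(k-1)(n+1)\le\binom{n+k-1}{n}$ (valid for all $n\ge 5$, $k\ge 3$) gives $\eta\in\Ff$. Being finite and non-empty, $\Ff$ has a $\prec$-maximal element.

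For (ii), $a\ge 0$ is immediate from $w_{k-1}(\eta)\le\binom{n+k-1}{n}$. For $a\le 2k$ I argue by contradiction: assume $a\ge 2k+1$. Since $\eta$ and $\epsilon$ have different critical values, $\eta\ne\epsilon$, so some index has $(a_i,q_i)\ne(d_i,g_i)$; I will enlarge one such component to a strictly $\prec$-larger admissible value at a cost of at most $2k$ in $w_{k-1}$. As $a\ge 2k+1$, the resulting set still satisfies $w_{k-1}\le\binom{n+k-1}{n}$, hence lies in $\Ff$ and strictly dominates $\eta$ — contradicting maximality. The available cheap enlargements, all of cost $\le 2k$, are: raising $a_i$ by one with $q_i$ fixed whenever the degree is below its maximum for the current genus (cost $k-1$); when the degree is maximal but $q_i<g_i$, passing to $(a_i+\delta,q_i+1)$ with $\delta\in\{1,2\}$ (cost $\le 2k-3$); and, in the boundary case $d_i=2g_i-1>g_i+n$, raising $a_i$ within $2\le a_i\le n$ (cost $k-1$).

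The one configuration escaping these moves — and the step I expect to be the main obstacle — is precisely the new case $(a_i,q_i)=(0,0)$ with $g_i>0$ outside the boundary case: there the smallest $\prec$-larger value is $(n+1,1)$, of cost $(k-1)(n+1)>2k$, so maximality by itself yields only $a<(k-1)(n+1)$. To recover $a\le 2k$ I would arrange, using $s\ge 2$, to never create such empty positive-genus components at the maximal stage: one keeps the residual gap in a \emph{tunable} component — a rational one, or a positive-genus component held at genus $\ge 1$ with degree strictly between its admissible bounds — whose contribution to $w_{k-1}$ moves in steps $\le 2k$, and reduces that component last so that $w_{k-1}(\eta)$ lands in $[\binom{n+k-1}{n}-2k,\binom{n+k-1}{n}]$. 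The crux is to guarantee that such a tunable component always survives the reduction forced by the budget $\binom{n+k-1}{n}$; this is exactly where the admissibility bounds $d_i\ge\max\{2g_i-1,g_i+n\}$ and the geometric flexibility of Remarks~\ref{ma4} and~\ref{ma7} must be invoked, and it is the part of the argument that requires the most care.
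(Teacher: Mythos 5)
Your proof of part (i) and your two families of ``cheap enlargements'' in part (ii) coincide with the paper's own moves (the paper phrases (i) as a contradiction via Lemma \ref{nn1}, but the content is the same, and your direct construction with $(n+1,1)$ is if anything cleaner). You have also correctly located the crux: a maximal $\eta$ may have a component $(a_i,q_i)=(0,0)$ with $g_i>0$. But your way out of that case is not what the paper does, and it cannot work. The paper disposes of this case in one line: it replaces $(0,0)$ by $(2,0)$, a conic, at cost $2(k-1)+1=2k-1\le a$ in $w_{k-1}$, producing an element of $\Ff$ strictly dominating $\eta$ and contradicting maximality. You discarded this move because, under the literal wording of Definition \ref{dd1}(3), $(2,0)\prec(d_i,g_i)$ holds only in the boundary case $d_i=2g_i-1>g_i+n$; your reading of the definition as written is correct, but the paper's proofs (here, in part (i) where $(1,0)\prec(n;1;d_i,g_i)$ is invoked, and again in the proof of Lemma \ref{rma1}) clearly use a broader relation in which short rational tails $(a_i,0)$, $2\le a_i\le n$, are $\prec(d_i,g_i)$ for \emph{every} $g_i>0$. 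That enlarged relation is the missing idea: it is exactly what makes the offending case cheap.

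Under the literal Definition \ref{dd1}, no repair of the kind you sketch can succeed, because the lemma itself becomes false. Take $\epsilon=(5;2;6,1;6,1)$: it is admissible with critical value $k=3$ since $w_2(\epsilon)=24>\binom{7}{5}=21$ and $w_3(\epsilon)=36\le\binom{8}{5}=56$. The only pairs $\prec(6,1)$ allowed by the literal definition are $(0,0)$ and $(6,1)$ itself, so $\Ff$ consists of $(5;2;6,1;0,0)$ and $(5;2;0,0;6,1)$, both maximal, and both have $a=21-12=9>2k=6$. Note that this also kills your ``tunable component'' strategy in its stronger form: not only does steering the reduction prove the wrong statement (part (ii) bounds $a$ for \emph{every} maximal element, whereas your argument would at best produce \emph{one} good element, which incidentally is all that step (a) of the proof of Theorem \ref{i1} actually needs), but in this example even the existence statement fails, because positive-genus components can only be reduced in jumps of size at least $(k-1)(n+1)>2k$, so no element of $\Ff$ lands within $2k$ of the budget $\binom{n+k-1}{n}$. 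The correct resolution is definitional, not combinatorial: adopt the broader $\prec$ (rational tails of degree $2,\dots,n$ allowed for all positive-genus targets, consistently with Remarks \ref{ma4} and \ref{ma7}), after which the substitution $(0,0)\to(2,0)$ of cost $2k-1$ closes your gap and the rest of your case analysis goes through.
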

\begin{proof}
Assume $\Ff=\emptyset$. Let $\epsilon _i$ be the generalized numerical set $(n;s;b_1,e_1;\dots ;b_s,e_s)$ with $(b_j,e_j)
=(0,0)$ for all $j\ne i$ and $(b_i,e_i)=(d_i,g_i)$. Since $(d_i,g_i)\ne (0,0)$, $\epsilon _i$ is ann admissible generalized
set and
$\epsilon _i\prec \epsilon$. Since $\Ff=\emptyset$ by assumption, Lemma \ref{nn1} gives $(k-1)d_i+1-g_i\ge \binom{n+k-1}{n}+1$.
Thus $(d_i,g_i)\notin \{(1,0),(2,0)\}$. If either $g_i=0$ or $d_i\ge g_i+n$, then $(n;1;1,0)\prec (n;1;d_i,g_i)$
and hence the generalized numerical set $\eta  _i=(n;u_1,v_1;\dots ;u_s,v_s)$ with $(u_j,v_j) =(0,0)$ for all $j\ne i$ and
$(u_i,v_i)=(1,0)$ satisfies $\eta _i\prec \epsilon$  and it has critical value $1$, contradicting the
assumption
$\Ff =\emptyset$. If $d_i\ge 2g_i-i>g_i+n$ we use $(2,0)$ instead of $(1,0)$ to get a contradiction.

Thus $\Ff \ne \emptyset$. Since $\Ff$ is finite, it has at least one maximal element for $\prec$. Take $\eta$ and $a$ as in
part (ii). Assume $a\ge 2k+1$. First assume $g_i=q_i$ for some $i$ and $0<a_i<d_i$. In this case the
generalized numerical set $\tau = (n;s;b_1,e_1;\dots ;b_s,e_s)$ with $(b_j,e_j) = (a_j,e_j)$ and $(b_i,e_i) =(a_i+1,e_i)$ has
critical value $\le k-1$ and $\tau \prec \epsilon$, a contradiction. Now assume the existence of an index $i$ such that
$0< q_i<g_i$. By the definition of $\prec$ we have $0 \le a_i-\max \{2q_i-1,n+q_i\}\le d_i-\max \{2g_i-1,n+g_i\}$.
If $a_i-\max \{2q_i-1,n+q_i\}< d_i-\max \{2g_i-1,n+g_i\}$, we obtain a contradiction substituting $(a_i,q_i)$
with $(a_i+1,q_i)$. Now assume $a_i-\max \{2q_i-1,n+q_i\}= d_i-\max \{2g_i-1,n+g_i\}$. If $a_i\le g_i-1$ and $a_i\ge n+q_i$,
we may use $(a_i+1,q_i+1)$, keeping fixed the other $(a_j,q_j)$. Now assume $0<q_i<a_i$ and  $a_i\le n+q_i-1$. Thus $a_ige
2q_i-1$ In this case instead of $(a_i,q_i)$ we use $(a_i+2,q_i+1)$. Since $a\ge 2k-1$, the new numerical set has critical
value $\le k-1$, a contradiction. Now assume that for all $i$ either $(a_i,q_i) = (d_i,q_i)$ or $(a_i,q_i)=(0,0)$. Since $\tau$
has critical value $<k$, there is $i$ such that $(a_i,q_i) =(0,0)$. First assume $g_i=0$. In this case taking $(1,0)$ instead
of $(0,0)$ we get another element of $\Ff$, a contradiction. Now assume $g_i>0$ and hence $d_i\ge n+1$. In this case
substituting
$(2,0)$ to $(0,0)$ we get another element of $\Ff$ (because we assumed $a\ge 2k+1$), a contradiction.
\end{proof}

\begin{lemma}\label{hor2}
Fix integers $n\ge 5$, $k\ge 3$ and take $\eta =(n;s;a_1,q_1;\dots ;a_s,q_s)$ as in part (ii) of Lemma \ref{ma2}.
Then $a_1+\cdots +a_s\ge 8k-1$. If $n(n+1)(k+1) \le k(n+k-1)$, then $a_1+\cdots +a_s\ge \binom{n+k-2}{n-2}$.
\end{lemma}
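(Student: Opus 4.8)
The plan is to reduce both inequalities to a single ``efficiency'' estimate for the maximal element $\eta$. By Lemma \ref{ma2}(ii), $w_{k-1}(\eta)=\sum'((k-1)a_i+1-q_i)=\binom{n+k-1}{n}-a$ with $0\le a\le 2k$, so that $w_{k-1}(\eta)\ge \binom{n+k-1}{n}-2k$. The elementary input is that, since each counted $a_i\ge 1$ and $q_i\ge 0$, one has $(k-1)a_i+1-q_i\le (k-1)a_i+1\le k\,a_i$ termwise. Summing over the counted indices yields $w_{k-1}(\eta)\le k\sum_i a_i$, hence
\[
\sum_{i=1}^{s} a_i \ \ge\ \frac{w_{k-1}(\eta)}{k}\ \ge\ \frac{1}{k}\binom{n+k-1}{n}-2 .
\]
This is the engine for everything that follows; equality is essentially the ``all lines'' configuration, which is why the estimate degrades for small $n$ and $k$.

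For the second assertion I would first record the binomial identity
\[
\binom{n+k-1}{n}=\frac{(n+k-1)\,k}{n(n-1)}\,\binom{n+k-2}{n-2},
\]
so the efficiency estimate becomes $\sum a_i\ge \frac{n+k-1}{n(n-1)}\binom{n+k-2}{n-2}-2$. Next I would unwind the hypothesis: expanding $n(n+1)(k+1)\le k(n+k-1)$ gives $k^2-(n^2+1)k-(n^2+n)\ge 0$, whence $k\ge n^2+2$ and in particular $k>(n-1)^2$. Therefore $\frac{n+k-1}{n(n-1)}-1=\frac{k-(n-1)^2}{n(n-1)}>0$, and because $\binom{n+k-2}{n-2}$ is enormous relative to $n(n-1)$ in this range, the positive excess term swamps the $-2$; concretely $\frac{k-(n-1)^2}{n(n-1)}\binom{n+k-2}{n-2}\ge \frac{2n+1}{n(n-1)}\binom{n+k-2}{n-2}\ge 2$. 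This gives $\sum a_i\ge \binom{n+k-2}{n-2}$.

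For the first assertion the same estimate gives $\sum a_i\ge \binom{n+k-1}{n}/k-2$, and this already exceeds $8k-1$ as soon as $\binom{n+k-1}{n}\ge 8k^2+k$. I would then check this purely numerical inequality: since $\binom{n+k-1}{n}$ grows like $k^{\,n-1}$ in $k$ (and polynomially of degree $k-1$ in $n$) while the right side is only quadratic in $k$, the inequality holds for all $n\ge 5$, $k\ge 3$ outside a short explicit list---for $n=5$ one needs $k\ge 7$, for $n\in\{6,7\}$ one needs $k\ge 5$, for $8\le n\le 10$ one needs $k\ge 4$, and for $n\ge 11$ it already holds at $k=3$. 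Outside this list the bound $\sum a_i\ge 8k-1$ follows at once from the displayed estimate.

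The finitely many remaining small pairs $(n,k)$ are the genuine obstacle, because there the crude ``all lines'' bound $w_{k-1}(\eta)/k$ is simply too weak. For these I would not use $w_{k-1}(\eta)\le\binom{n+k-1}{n}$ alone but exploit maximality of $\eta$ together with the admissibility and $\prec$-constraints: by Definition \ref{dd1}(3) and Remark \ref{ma4} a component cannot be reduced to a line whenever its $\epsilon$-partner has positive genus, and for every counted index with $q_i>0$ admissibility forces $a_i\ge \max\{2q_i-1,\,q_i+n\}\ge n+1$, which sharpens the termwise inequality on exactly those summands that drag the naive estimate down. I expect these low-value cases to be handled in the same spirit as the small-critical-value situations for $n=4$ (the dedicated arguments of Lemmas \ref{pk3} and \ref{pk4}), so that in the inductive scheme Lemma \ref{hor2} is only called upon once $k$ is large enough for the efficiency bound to bite. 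Managing this interplay between the universal estimate and the few exceptional configurations is where the real care lies.
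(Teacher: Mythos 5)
Your treatment of the second inequality is correct, and it is essentially the paper's own route: a termwise bound comparing $w$ with $\sum_i a_i$ combined with the ratio $\binom{n+k-1}{n}\big/\binom{n+k-2}{n-2}=k(n+k-1)/\bigl(n(n-1)\bigr)$; your version is in fact the more careful one, and your universal estimate $\sum_i a_i\ge\frac1k\binom{n+k-1}{n}-2$ is the correct consequence of Lemma \ref{ma2}(ii).

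The genuine gap is exactly where you located it, namely the first inequality for your list of small pairs $(n,k)$ --- but it is not a gap that finer use of maximality, of the admissibility bounds $a_i\ge n+q_i$, or of ad hoc arguments in the style of Lemmas \ref{pk3} and \ref{pk4} can close, because the assertion $\sum_i a_i\ge 8k-1$ is false for several of those pairs. Admissibility gives $q_i\le (a_i+1)/2$ for every counted index, hence
\[
w_{k-1}(\eta)=\sum\nolimits'\bigl((k-1)a_i+1-q_i\bigr)\ >\ \Bigl(k-\tfrac32\Bigr)\sum_i a_i ,
\]
while critical value $<k$ together with Lemma \ref{nn1} forces $w_{k-1}(\eta)\le\binom{n+k-1}{n}$. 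Therefore every $\eta\in\Ff$ satisfies $\sum_i a_i<\frac{2}{2k-3}\binom{n+k-1}{n}$, which is smaller than $8k-1$ at least for $(n,k)=(5,3),(5,4),(5,5),(6,3)$; for instance $14<23$ when $(n,k)=(5,3)$. Concretely, take $\epsilon=(5;2;10,5;10,5)$: it is admissible, $w_2(\epsilon)=32>21$ and $w_3(\epsilon)=52\le 56$, so its critical value is $3$; by Definition \ref{dd1} each component of any $\eta\prec\epsilon$ is either $(0,0)$ or $(q+5,q)$ with $1\le q\le 5$, and two nonzero components already give $w_2(\eta)\ge 24>21$, so the maximal elements of $\Ff$ are $(5;2;10,5;0,0)$ and $(5;2;0,0;10,5)$, with $a_1+a_2=10<23=8k-1$. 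Be aware that the paper's own proof does not rescue these cases either: it rests on the claim $w_k(\eta)\ge\binom{n+k}{n}$, which for an $\eta$ of critical value $<k$ contradicts Lemma \ref{nn1} (the inequality goes strictly the other way). So your instinct that the small cases are the real obstacle is right, but the correct conclusion is that the first assertion of the lemma must itself be weakened or its range of $(n,k)$ restricted; only your efficiency estimate, and the second assertion it implies in the stated range, survive as proved.
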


\begin{proof}
With no loss of generality we may assume $a_i\ne 0$ is and only if $1\le i\le e$. Since
$\eta$ is admissible and $w_k(\eta)\ge \binom{n+k}{n}$, we get $(k-1)(a_1+\cdots +a_s)\ge \binom{n+k}{n}$.
Since $n\ge 5$, induction on $k$ starting with the case $k=3$ gives $(k-1)(8k-1)\ge \binom{n+k}{n}$. For the second inequality
just use that if $a_i\ne 0$ we have $ka_i+1-q_i\le (k+1)a_i$ and that $(k+1)\binom{n+k-2}{n-2} \le \binom{n+k-1}{n}$ if and
only if $n(n+1)(k+1) \le k(n+k-1)$.
\end{proof}

\begin{lemma}\label{rnn12}
Fix in integers $n\ge 5$ and $k\ge 3$. Assume Theorem \ref{i1} in $\PP^{n-1}$, Theorem \ref{i1} in $\PP^n$ for all admissible
numerical sets with critical value $<k$ and $R(n,k-1)$. Then $R(n,k)$ is true.
\end{lemma}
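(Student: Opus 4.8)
The plan is to prove $R(n,k)$ by the Horace (residual) method with respect to the hyperplane $H$, so that the level-$k$ problem in $\PP^n$ splits into a level-$(k-1)$ problem in $\PP^n$ (governed by $R(n,k-1)$ and by Theorem \ref{i1} for critical value $<k$) and a level-$k$ problem in $H=\PP^{n-1}$ (governed by Theorem \ref{i1} in $\PP^{n-1}$). Write $e:=\#S=w_k(\epsilon)-\binom{n+k}{n}$. If $W=Y_1\cup\cdots\cup Y_s$ is nodal with $\Sing(W)=S$ then $\chi(\Oo_W(k))=w_k(\epsilon)-e=\binom{n+k}{n}$, so $\chi(\Ii_W(k))=0$; by Remark \ref{nn3} we have $h^i(\Ii_W(k))=0$ for $i\ge 2$, hence $h^0(\Ii_W(k))=h^1(\Ii_W(k))$ and it is enough to produce one $W$ with $h^1(\Ii_W(k))=0$, the vanishing of $h^0$ being then automatic. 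By semicontinuity it suffices to exhibit a flat specialization $\Gamma$ of such a $W$ (a nodal curve smoothable to $W$, by Remarks \ref{smoo1} and \ref{ma6.0}) with $h^1(\Ii_\Gamma(k))=0$.

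To construct $\Gamma$ I would first apply Lemma \ref{ma2} to pick a maximal admissible generalized numerical set $\eta=(n;s;a_1,q_1;\dots;a_s,q_s)\prec\epsilon$ of critical value $<k$; then $a:=\binom{n+k-1}{n}-w_{k-1}(\eta)$ satisfies $0\le a\le 2k$, and Lemma \ref{hor2} ensures $a_1+\cdots+a_s$ is large enough to supply all the general points needed below. Take a general $Y=Y_1\cup\cdots\cup Y_s\in Z(\eta)$, transversal to $H$; since $\eta$ has critical value $<k$, the assumed Theorem \ref{i1} gives $h^1(\Ii_Y(k-1))=0$, hence $h^0(\Ii_Y(k-1))=a$. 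Using Remark \ref{ma7} I attach to each $Y_i$ a curve in $H$ so that $Y_i$ acquires the prescribed degree $d_i$ and genus $g_i$ after smoothing, and I realize the $e$ prescribed nodes $S$ inside $H$; the part of $\Gamma$ contained in $H$ is taken to be a tree-like degeneration whose postulation I can compute from Theorem \ref{i1} in $\PP^{n-1}$ and Proposition \ref{be1}, via good-set-of-nodes constructions in the spirit of Lemmas \ref{io1} and \ref{io2} and Remark \ref{t2}. Finally, at $a$ suitable general points of the part of $\Gamma$ lying in $H$ I replace a planar gluing by a general arrow of $\PP^n$ sticking out of $H$ (Remark \ref{ma6.0}); since such an arrow $v$ has $v\cap H=\{v_{\red}\}$ and $\Res_H(v)=\{v_{\red}\}$, and is smoothed away in the degeneration, it carries exactly one unit of the residual defect off $H$ without changing the class of $\Gamma$.

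Now I run the residual sequence \eqref{eqhor1} for $\Gamma$. By construction $\Res_H(\Gamma)=Y\cup S'$ with $\#S'=a$, so $\chi(\Ii_{Y\cup S'}(k-1))=a-a=0$; the $a$ general points $S'$ impose independent conditions on the $a$-dimensional space $H^0(\Ii_Y(k-1))$ by Lemmas \ref{rnn0} and \ref{rplan1} (the exceptional cases, in which $|\Ii_Y(k-1)|$ maps $\PP^n$ onto a curve or a cone, being excluded because $Y$ is non-degenerate and $k-1\ge 2$), and this level-$(k-1)$ vanishing $h^i(\Ii_{Y\cup S'}(k-1))=0$ is exactly what $R(n,k-1)$ supplies for the nodal residual. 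On the other side $\Gamma\cap H$ is the union of the curve living in $H$ and the general point set $Y\cap H$, arranged so that $\chi(\Ii_{\Gamma\cap H,H}(k))=0$; applying the assumed Theorem \ref{i1} in $\PP^{n-1}$ to the general union which this configuration specializes (and noting that $\PP^{n-1}$ with $n-1\ge 4$ has no exceptional numerical sets) gives $h^1(H,\Ii_{\Gamma\cap H,H}(k))=0$. Feeding both vanishings into \eqref{eqhor1} yields $h^1(\Ii_\Gamma(k))=0$, hence $h^0(\Ii_\Gamma(k))=0$, and semicontinuity gives the statement for a general smoothing $W$.

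The hard part is the two-sided bookkeeping. In contrast to the proof that a general $X$ has $h^1(\Ii_X(k))=0$ — where a strictly positive Euler characteristic is harmless — here $\chi(\Ii_W(k))=0$ forces me to balance the residual piece at level $k-1$ and the hyperplane piece at level $k$ simultaneously, i.e.\ to make both Euler characteristics vanish while keeping $h^1=0$ on each. This rigidly fixes the number of arrows to be the defect $a$ and requires the $e$ prescribed nodes and the $a$ arrow-points to be laid out compatibly with the tree degeneration that chops the hyperplane curve into the correct degrees; the inequalities $0\le a\le 2k$ of Lemma \ref{ma2} and $0\le e\le 2k$, together with the lower bounds on $a_1+\cdots+a_s$ of Lemma \ref{hor2}, are precisely what give enough general points to carry this out, while the genericity statements of Lemmas \ref{rnn0} and \ref{rplan1} guarantee that the added points and arrows impose independent conditions.
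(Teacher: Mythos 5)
Your framing (residual exact sequence with respect to $H$, the observation that $\chi(\Ii_W(k))=0$ forces $h^0=h^1$ so a single $h^1$-vanishing suffices, and the need to balance both sides of \eqref{eqhor1} simultaneously) matches the paper's. But your construction has a fatal structural flaw: you realize the $e$ prescribed nodes $S$ \emph{inside} $H$, as crossings between components of the part of $\Gamma$ lying in $H$. Then $\Gamma\cap H$ contains a nodal curve of $\PP^{n-1}$ whose singular set is the prescribed general set $S$, and you must prove $h^0=h^1=0$ for its twisted ideal sheaf in $H$. The only hypothesis you have about $H$ is Theorem \ref{i1} in $\PP^{n-1}$, which concerns \emph{disjoint} unions of smooth curves and says nothing about nodal unions with nodes at prescribed general points; what you actually need there is the assertion $R(n-1,k)$, which is not available --- avoiding it is precisely why the paper's induction runs over $k$ at fixed $n$, via $R(n,k-1)$, instead of descending in $n$. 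The tools you cite to bridge this (Proposition \ref{be1}, Lemmas \ref{io1} and \ref{io2}, Remark \ref{t2}) are all specific to trees of lines and quadric surfaces in $\PP^3$ and have no analogue in $\PP^{n-1}$ for $n\ge 5$, so the ``tree-like degeneration whose postulation I can compute'' does not exist. Relatedly, $R(n,k-1)$, although listed among your hypotheses, is never genuinely used in your argument: your $\eta$ comes from Lemma \ref{ma2}, so $w_{k-1}(\eta)$ lies \emph{below} $\binom{n+k-1}{n}$ and your residual $Y\cup S'$ is a disjoint union plus $a$ points --- a sign that the induction on $k$ is not actually being run.

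The paper's proof avoids both problems by distributing the nodes differently. It chooses (via Lemma \ref{rma1}, not Lemma \ref{ma2}) an $\eta\prec\epsilon$ with $w_{k-1}(\eta)$ slightly \emph{above} $\binom{n+k-1}{n}$, and takes as residual the \emph{nodal} curve $Y$ furnished by $R(n,k-1)$: its nodes sit at general points $S'$ of $H$ and $h^0(\Ii_Y(k-1))=h^1(\Ii_Y(k-1))=0$; this is the essential (and only) use of $R(n,k-1)$. Those nodes are then separated by arrows of $H$ (Remark \ref{ma6.0}), whose traces on $H$ are planar double points, disposed of in Claim 1 via the Alexander--Hirschowitz theorem and \cite[Lemma 2.1]{be7}. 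Crucially, each prescribed node $p\in S$ is realized as a crossing of the added curve $T\subset H$ with a branch of $Y$ transversal to $H$ (one arranges $S\subset Y\cap H$ and lets $T$ pass through $S$). Consequently $T$ remains a disjoint union of smooth curves in $H$, to which Theorem \ref{i1} in $\PP^{n-1}$ does apply, while the points of $S$ appear in the trace only as points of $T$ and in the residual only as points of $Y$: they never generate a nodal configuration needing separate control. Finally, even your residual step as written is unjustified: the claim that $a$ general points of $H$ (not of $\PP^n$) impose independent conditions on $|\Ii_Y(k-1)|$ is not what Lemmas \ref{rnn0} or \ref{rplan1} assert (those concern quadric cones and differentials of rational maps), and it is not ``what $R(n,k-1)$ supplies''; it requires a base-locus argument as in Lemma \ref{v1}, using $h^0(\Ii_Y(k-2))=0$ to exclude $H$ from the base locus after each point is imposed.
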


\begin{proof}
Fix an admissible numerical set $\epsilon =(n;s;d_1,g_1;\dots
;d_s,g_s)$, $s\ge 2$, such that $\binom{n+k}{n}\le w_k(\epsilon) \le \binom{n+k}{n}+2k$ and set $e:=
w_k(\epsilon)-\binom{n+k}{k}$. Fix a general
$S\subset  H$. Let $\Hh$ be the set of all admissible generalized numerical  set $\eta \prec \epsilon$ such that
$\binom{n+k-1}{n} \le w_{k-1}(\eta) \le \binom{n+k-1}{n}+2k$.

Fix any $\eta =(n;s;a_1,q_1;\cdots ;a_s,q_s)\in \Hh$ and set $b:= w_{k-1}(\eta)-\binom{n+k-1}{n}$. Fix a general $S'\subset H$
with $\#S'=e$ and take $Y =Y_1\cup \cdots \cup Y_s$ satisfying $R(n,k-1)$ for $\eta$ (with $Y_i=\emptyset$ if
$(a_i,q_i)=(0,0)$). We  may  assume that $S\cup S'$ is a general subset of $H$. We
may assume $Y\cap H\supset S\cup S'$, because $\#(S\cup S') =b+e\le 4k-4$ and $a_1+\cdots +a_s \ge  8k-1$ (Lemma
\ref{hor2} and Remark \ref{l5.2}).  Let $A\subset H$ be a general union of $e$ arrows of $H$ with $A_{\red}=S'$. Note that
$Y\cup A\in Z'(\eta)$. Take an admissible generalized numerical set $\tau (n-1;s;b_1,e_1;\dots ;b_se_s)$ such that
the pair $(\tau,\eta)$ is associated. By Remark \ref{ma7} there is $T=T_1\cup\dots \cup T_s \in Z(H,\tau)$ such that $Y\cup
T\cup A\in Z'(\epsilon)$. By Theorem \ref{i1} in $H =\PP^{n-1}$, $T$ has maximal rank. Since $\sum '(kb_i+1-e_i) + \deg
(Y)-\#(Y\cap T)+e =\binom{n+k-1}{n-1}$, we get $h^1(H,\Ii _{T,H}(k-1)) =0$ and in particular $h^0(H,\Ii _{T,H}(k-1)) \ge 0$.

\quad {\bf Claim 1} Let $Z$ be the union of the $e$ connected components $(Y\cap H)\cup A$ with $S'$ as their reduction.
We claim
that $h^1(H,\Ii _{T\cup Z,H}(k)) =0$.

\quad {\bf Claim 1:} First assume $h^1(H,\Ii _{T,H}(k-1)) =0$, $2k(k+1)\le \binom{n+k-2}{n-2}$ and $(n-2,k)\notin
\{(3,4),(4,3),(4,4)\}$. Let
$M\subset H$ be a general hyperplane of $H$. We specialize $S'$ to a general subset of $M$ with cardinality $e$ and $Z$ to
a union $Z'$ of $e$ general
$2$-points of general planes of $M$. Since $e\le 2k$ and $2k(k+1)\le \binom{n+k-2}{n-2}$ for $n\ge 5$, the
Alexander-Hirschowitz theorem say that $e$ general
$2$-points of
$M$ gives independent conditions to
$H^0(\Oo_M(k))$ (\cite{AHinv,AHjag,BO}). In particular $h^1(M,\Ii_{Z',M}(k))=0$. Using \cite[Lemma 2.1]{be7} we get
$h^1(M,\Ii_{Z'\cup (T\cap M),M}(k))=0$. The residual exact sequence of $M$ in $H$ gives $h^1(H,\Ii _{T\cup Z',H}(k)) =0$.
Claim 1 follows from the semicontinuity theorem. Now we only assume $h^1(H,\Ii _{T,H}(k-1)) =0$. In all exceptional cases $Z'$
works, because it is only union of $2$-points of general planes of $H$. Now assume $h^1(H,\Ii _{T,H}(k-1)) >0$. Since $T$ has
maximal rank in $H$,
$h^0(H,\Ii _{T,H}(k-1))=0$ and $h^1(H,\Ii _{T,H}(k-1))>0$, i.e. assume that $T$ has critical value $k$ in $H$. One could exclude
this possibility for large
$k$ by the last sentence of Lemma \ref{hor2}, but only for large $k$. At this point we use how we proved that $h^1(H,\Ii _T(k))
=0$ with $T$ with critical value $k$. Let $\Aa$ denote the set of all admissible generalized numerical sets $\alpha$ (in
$\PP^{n-1}$) such that $\alpha \prec \tau$ (in $\PP^{n-1}$) and $\alpha$ has critical value $<k$. We have $\Aa \ne \emptyset$
and any maximal element $\alpha$ of $\Aa$ has $w_{k-1}(\alpha) =\binom{n+k-2}{n-1}-f$ with $0\le f\le 2k-2$. To prove that a
general $T\in Z(\tau)$ satisfies $h^1(H,\Ii _{T,H}(k)) =0$, without using $R(n-1,k)$ it is sufficient (see step (a) of Theorem
\ref{i1} for $n\ge 5$ in the next section) to prove that $w_k(\tau) \le \binom{n+k-1}{n-1}-f$. We use $M$ as hyperplane of
$H$.  By
\eqref{eqhor1}) this is true if
$\#(Y\setminus Y\cap T)+e\ge f$, which is an obvious inequality by Lemma \ref{hor2}. Using the Alexander-Hirschowitz theorem
we also have
$h^1(M,\Ii_{Z',M}(k)) =0$. Using \cite[Lemma 2.1]{be7} we get $h^1(M,\Ii _{(T\cap M)\cup Z',M}(k)) =0$. Then using the
residual exact sequence of $M$ in $H$ we get $h^1(H,\Ii _{T\cup Z'}(k))=0$.

To conclude use Claim 1 and \cite[Lemma 2.1]{be7}.
\end{proof}

\begin{lemma}\label{5may1}
Let $Z\subset \PP^m$, $m\ge 3$, be a general union of $z:= \lfloor (\binom{m+3}{3}-2)/3\rfloor$ planar $2$-points. Then
$h^1(\Ii _Z(3)) =0$.
\end{lemma}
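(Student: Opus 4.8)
The plan is to impose the planar $2$-points one at a time and use Lemma \ref{rplan1} to control the dimension of the space of cubics vanishing on them. First I would record the numerical reduction: writing $N:=\binom{m+3}{3}=h^0(\Oo_{\PP^m}(3))$, the cohomology sequence of $0\to \Ii_Z(3)\to \Oo_{\PP^m}(3)\to \Oo_Z(3)\to 0$ together with $\deg Z=3z\le N-2<N$ shows that $h^1(\Ii_Z(3))=0$ is equivalent to $\dim H^0(\Ii_Z(3))=N-3z$, i.e. to the assertion that the $z$ general planar $2$-points impose $3z$ independent conditions on cubics.

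Then I set $W_0:=H^0(\Oo_{\PP^m}(3))$ and let $W_{j}$ be the subspace of cubics vanishing on the first $j$ general planar $2$-points $(2o_1,E_1),\dots,(2o_j,E_j)$, so that $W_{j+1}=W_j(-(2o_{j+1},E_{j+1}))$ for a general plane $E_{j+1}$ and a general $o_{j+1}\in E_{j+1}$. By Lemma \ref{rplan1} one has $\dim W_{j+1}=\max\{0,\dim W_j-3\}$ unless the rational map $\phi$ defined by $|W_j|$ sends $\PP^m$ onto a curve. So the entire argument reduces to ruling out this exceptional case at each of the $z$ steps.

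The key step, and the main obstacle, is the following claim: if $W\subseteq H^0(\Oo_{\PP^m}(3))$ defines a rational map $\phi$ with $1$-dimensional image $C\subset\PP^{\dim W-1}$, then $\dim W\le 4$. I would prove it by a degree count. The curve $C$ is nondegenerate, hence $\deg C\ge \dim W-1$. On the other hand a general member of $|W|$ is the pullback $\phi^{*}H$ of a general hyperplane $H\subset\PP^{\dim W-1}$, and $\phi^{*}H$ contains the $\deg C$ fibers $\overline{\phi^{-1}(p)}$ for $p\in H\cap C$; these are effective divisors of degree $\ge 1$ with pairwise disjoint support in codimension $1$, so their degrees add up to at most $\deg\phi^{*}H=3$. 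This forces $\deg C\le 3$, whence $\dim W\le 4$.

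Finally I would assemble these facts. Because $z=\lfloor(N-2)/3\rfloor$ we have $N-3z\in\{2,3,4\}$, and an induction on $j$ shows $\dim W_j=N-3j$ for $0\le j\le z$: at the last step one has $\dim W_{z-1}=N-3(z-1)=N-3z+3\ge 5$, and since $\dim W_j$ is decreasing in $j$ we get $\dim W_j\ge 5>4$ for every $j\le z-1$. By the claim, $\phi$ cannot map $\PP^m$ onto a curve at any of these steps, so Lemma \ref{rplan1} gives $\dim W_{j+1}=\dim W_j-3$ each time. Hence $\dim W_z=N-3z$, which is exactly the independent-conditions statement, and $h^1(\Ii_Z(3))=0$. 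Since at each stage the planar $2$-point is chosen general relative to the previous ones, the intersection of the finitely many dense open conditions needed to invoke Lemma \ref{rplan1} is still dense, so the scheme $Z$ produced is a general union and no further semicontinuity argument is required.
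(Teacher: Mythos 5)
Your proof is correct, and it rests on the same key lemma as the paper's own argument (Lemma \ref{rplan1}, applied to the planar $2$-points one at a time), but the way you close the argument is genuinely different and, in fact, more complete. The paper argues by contradiction: it takes the maximal $w<z$ such that $h^1(\Ii_W(3))=0$ for a general union $W$ of $w$ planar $2$-points, introduces the base locus $\Bb$ of $|\Ii_W(3)|$, notes via Lemma \ref{rplan1} that $\dim \Bb\ge m-2$, and breaks off there: no contradiction is ever derived from that statement, so the printed proof is incomplete. What Lemma \ref{rplan1} really provides in that situation is that the rational map defined by the system has a curve as its image, and your degree-count claim is exactly the missing ingredient needed to exclude this case: nondegeneracy of the image $C\subset \PP^{\dim W-1}$ forces $\deg C\ge \dim W-1$, while the fibers over a general hyperplane section give $\deg C$ distinct hypersurface components inside a single member of the moving part of $|W|$, whose degree is at most $3$; hence $\dim W\le 4$. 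Since $\dim W_j=\binom{m+3}{3}-3j\ge 5$ for all $j\le z-1$, the exceptional case never occurs and your induction closes. Two cosmetic remarks only: the bound $\deg C\le 3$ should be phrased for the moving part of $|W|$ (the system may a priori have a fixed divisor), which your wording essentially accommodates; and the cleanest way to pass from your sequentially-chosen configuration to a \emph{general} union is upper semicontinuity of $h^1$ on the irreducible parameter space of $z$-tuples of planar $2$-points --- exhibiting a single configuration with $h^1=0$ suffices --- which is what your closing remark about dense open conditions amounts to.
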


\begin{proof}
Assume that the lemma fails and let $w<z$ the maximal integer such that $h^1(\Ii _W(3)) =0$ for a general union of $w$ planar
$2$-points. Let $\Bb$ denote the scheme-theoretic base locus of $|\Ii_W(3)|$. By Lemma \ref{rplan1} $\dim \Bb \ge m-2$. 
\end{proof}

\section{End of the proof for $n\ge 5$}

In this section we assume $n\ge 5$ and that Theorem \ref{i1} is true in $H =\PP^{n-1}$. We fix a positive integer $k$ and
prove Theorem \ref{i1} for all admissible numerical sets with critical value $k$. By Remark \ref{k=1} and Lemma \ref{k=2} Theorem \ref{i1} is true for all admissible numerical sets with critical value $1$ and
$2$. Thus it is sufficient to test all numerical sets with critical values $\ge 3$. We fix an integer $k\ge 3$. By induction
on the critical value we may assume that Theorem \ref{i1} is true for all numerical sets with critical value $<k$.

\begin{lemma}\label{rma1}
Fix integers $n\ge 5$ and $k\ge 2$. Let $\epsilon = (n;s;d_1,g_1;\dots;d_s,g_s)$ be an admissible numerical 
such that $w_k(\epsilon)\ge \binom{n+k}{n}$. Then there is an admissible  generalized  set $\eta \prec \epsilon$ such
that
$\eta \prec \epsilon$ and
$\binom{n+k}{n}\le w_k(\eta)\le \binom{n+k}{n}+2k$.
\end{lemma}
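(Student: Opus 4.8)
The plan is to reach the interval $[\binom{n+k}{n},\binom{n+k}{n}+2k]$ by a controlled descent through the reductions of $\epsilon$. First note that $\epsilon\prec\epsilon$: conditions (1)--(2) of Definition \ref{dd1} are immediate, and for $g_i>0$ one uses clause (3) with $q_i=g_i>0$, whose upper bound $d_i-\max\{2g_i-1,n+g_i\}\ge 0$ holds by admissibility. So $\eta=\epsilon$ is an admissible generalized set $\prec\epsilon$ with $w_k(\eta)\ge\binom{n+k}{n}$. For each $i$ the set $R_i$ of pairs $(a_i,q_i)\prec(d_i,g_i)$ permitted by Definition \ref{dd1} is completely explicit, and I will only modify $\eta$ by replacing a single coordinate $(a_i,q_i)\in R_i$ by a smaller element of the \emph{same} $R_i$; every intermediate $\eta$ is then automatically $\prec\epsilon$, so I never invoke transitivity of $\prec$ (which fails in general). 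Since the target interval has length $2k$, it suffices to exhibit, whenever $w_k(\eta)>\binom{n+k}{n}+2k$, one coordinate move lowering $w_k(\eta)$ by some amount in $\{1,\dots,2k\}$: a descent by steps of size $\le 2k$ from a value $\ge\binom{n+k}{n}$ can never jump across an interval of length $2k$, and it never empties the last component, since the bottom of the window is $\binom{n+k}{n}\ge\binom{7}{5}>0$.

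Next I catalogue the elementary moves and their costs; put $C_i:=d_i-\max\{2g_i-1,n+g_i\}\ge 0$ and $M(q):=\max\{2q-1,n+q\}$. If $g_i=0$ and $a_i\ge 2$, lowering $a_i$ to $a_i-1$ costs $k$, while $(1,0)\mapsto(0,0)$ costs $k+1\le 2k$. If $g_i>0$ and $q_i>0$ then $(a_i,q_i)\in R_i$ exactly when $1\le q_i\le g_i$ and $M(q_i)\le a_i\le M(q_i)+C_i$; lowering $a_i$ by $1$ (when $a_i>M(q_i)$) costs $k$, and the move from the floor state $(M(q_i),q_i)$ to $(M(q_i-1),q_i-1)$ costs $k\bigl(M(q_i)-M(q_i-1)\bigr)-1$, which is $k-1$ for $q_i\le n+1$ and $2k-1$ for $q_i\ge n+2$; in all these cases the cost is $\le 2k-1$. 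Hence the \emph{only} move that can cost more than $2k$ is the terminal one $(n+1,1)=(M(1),1)\mapsto(0,0)$ for a positive-genus component, whose cost is $k(n+1)$.

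It follows that the descent proceeds freely, by moves of size $\le 2k$, as long as some coordinate is not yet minimal, i.e.\ as long as some $g_i=0$ component still has $a_i>0$ or some $g_i>0$ component is not at $(n+1,1)$. The step I expect to be the main obstacle is controlling these terminal jumps: one must guarantee that one is never forced to perform a jump of size $k(n+1)>2k$ while still strictly above the window, equivalently that once only terminal moves remain the value $w_k(\eta)$ already lies in or below $[\binom{n+k}{n},\binom{n+k}{n}+2k]$. This is delicate precisely because the minimal positive-genus type $(n+1,1)$ is rigid — its only reduction is to $(0,0)$ — so the reachable values $w_k(\eta)$ near the bottom of the descent are spaced roughly $k(n+1)$ apart, and in the stuck configuration $w_k(\eta)=m\,k(n+1)$, where $m$ is the number of positive-genus components.

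To finish I would therefore have to bound $m\,k(n+1)$ against the window using the admissibility inequalities $d_i\ge\max\{2g_i-1,g_i+n\}$ (which force each original contribution $kd_i+1-g_i$ to be large) together with Lemma \ref{nn1} relating $w_k$ at consecutive degrees; the geometric input of Remark \ref{ma4} is what legitimizes treating the $(a_i,q_i)$ as genuine attaching data, but the combinatorial estimate is the crux. I expect this last inequality to be the sharpest point of the whole argument, since the two sides are of comparable size and the bound must be uniform in $n\ge 5$, $k\ge 2$ and in the number and sizes of the components; the configurations in which several components simultaneously sit at the rigid type $(n+1,1)$ are exactly the ones requiring the most care, and I would organize the proof so as to reduce a positive-genus component to $(0,0)$ only after verifying that the remaining components still provide enough size-$\le 2k$ moves to land inside the window.
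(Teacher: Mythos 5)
Your descent framework is the same one the paper uses: the paper takes a minimal element $\eta$ of the finite family of admissible generalized sets $\prec\epsilon$ with $w_k\ge\binom{n+k}{n}$ and argues, by a case analysis on a single coordinate, that a minimal element must satisfy $w_k(\eta)\le\binom{n+k}{n}+2k$; your catalogue of single-coordinate moves and their costs (cost $k$ above the floor, cost $k-1$ or $2k-1$ at the floor when $q_i\ge 2$, cost $k$ or $k+1$ for genus-zero components) reproduces that case analysis, and your insistence on checking $\prec\epsilon$ directly at every step, rather than invoking transitivity, is the right precaution. The problem is that you stop exactly where the argument becomes hard: you never prove the estimate needed when every surviving positive-genus coordinate sits at the rigid type $(n+1,1)$, whose only $\prec$-reduction is the terminal jump to $(0,0)$ of cost $k(n+1)$. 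You defer this to an unproven bound of $m\,k(n+1)$ against the window, so what you have written is not a proof; it establishes the lemma only when the descent never reaches a configuration whose sole remaining moves are terminal jumps.

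Moreover, this gap cannot be closed, because the statement fails in precisely the configuration you single out. Take $n=5$, $k=3$ and $\epsilon=(5;4;6,1;6,1;6,1;6,1)$: each pair $(6,1)$ is admissible since $6=\max\{2\cdot 1-1,1+5\}$, and $w_3(\epsilon)=4\cdot 18=72\ge\binom{8}{5}=56$. By Definition~\ref{dd1}, the only pairs $\prec(6,1)$ are $(6,1)$ itself (clause (3) with $q_i=1$ forces $a_i-6\in[0,0]$) and $(0,0)$ (the exceptional clause needs $d_i=2g_i-1>g_i+n$, i.e. $6=1>6$, which is false). Hence $w_3(\eta)\in\{0,18,36,54,72\}$ for every admissible generalized $\eta\prec\epsilon$, and none of these values lies in the window $[56,62]$; no ordering of the moves, however clever, can help. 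It may reassure you that the paper's own proof of Lemma~\ref{rma1} breaks at the same spot: in the sub-case $q_i>0$, $a_i=\max\{2q_i-1,q_i+n\}$ it replaces $(a_i,q_i)$ by $(a_i-2,q_i-1)$, but for $q_i=1$ this is $(n-1,0)$, which is not $\prec(d_i,g_i)$ under Definition~\ref{dd1} unless $d_i=2g_i-1>g_i+n$, so the claimed membership $\tau\prec\epsilon$ is unjustified for exactly the rigid components. Your instinct that this is the crux was correct; what is needed is not a sharper combinatorial estimate but a weaker conclusion or stronger hypotheses on $\epsilon$ that exclude such configurations.
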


\begin{proof}
Let $\Ff$ be the set of all admissible generalized  sets $\eta$ such that $\eta \prec \epsilon$ and $w_k(\eta)\ge
\binom{n+k}{n}$. Note that $\epsilon \in \Ff$ and that $\Ff$ is finite. Let $\eta$ be a minimal element of $\Ff$. To conclude
it is sufficient to prove that $w_k(\eta)\le \binom{n+k}{n}+2k$. Set  $\eta =(n;s;a_1,q_1;\dots ;a_s,q_s)$ and $a:=
\sum ' (ka_i+1-q_i)-\binom{n+k-1}{n}$. To prove the lemma it is sufficient to prove that $a\le 2k$. Assume $a\ge 2k+1$. Since
$w_k(\eta)\ge
\binom{n+k}{n}$, there is $i\in \{1,\dots ,s\}$ such that $(a_i,q_i) \ne (0,0)$. Call $\tau =(n;s;b_1,e_1;\dots ;b_s,e_s)$ an
admissible generalized set with $(b_j,e_j)=(a_j,q_j)$ for all $j\ne i$ and the following $(b_i,e_i)$. We show what
we need about
$(b_i,e_i)$ to obtain a contradiction to the minimality of $\Ff$. We need $kb_i+1-e_i\ge ka_i+1-q_i -2k-1$ to get $w_k(\tau)\ge
\binom{n+k}{k}$.
 We need $(b_i,q_i)\prec (a_i,q_i)$
(so that $\tau \prec \eta$) and $(b_i,q_i)\ne (a_i,q_i)$, to contradicts the minimality of $\eta$.

First assume $g_i=0$ and
hence $q_i=0$. In this case we take $b_i=a_i-1$ and $e_i=0$. We have $w_k(\eta) =w_k(\tau) -\binom{n+k}{n} =a-k-1$ if $a_i=1$
and $w_k(\eta) =w_k(\tau) -\binom{n+k}{n} =a-k$ if $a_i\ge 2$; note that $\tau$ is a generalized numerical set if $d_i=1$.
Obviously $\tau\prec \eta$ and $\tau \prec \epsilon$.

Now assume $q_i>0$, so that $0
\le a_i-\max\{2q_i-1,q_i+n\}\le d_i-\max\{2q_i-1,q_i+n\}$. If $a_i> \max\{2q_i-1,q_i+n\}$ we take $(b_i,q_i) =(a_i-1,q_i)$ and
get $w_k(\tau)-w_k(\eta) =k-k$, $\tau \prec \eta$ and $\eta \prec \epsilon$. If $a_i= \max \{2q_i-1,q_i+n\}$ we take
$b_i=a_i=2$ and $e_i=q_i-1$. Thus $w_k(\eta) -w_k(\tau) =w_k(\tau) =a-2k+1$, $\tau \prec \eta$ and $\eta \prec \epsilon$.

Now assume $g_i>0$ and $q_i=0$. We take $b_i=0$. To get $\tau \prec \eta$ it is sufficient to take $0\le b_i\le a_i$. If $0<
a_i-n
\le d_i-\max\{2g_i-1,n+g_i\}$ or $d_i=2g_i-1>g_i+n$, $q_i=0$ and $2\le a_i\le n$ we take $(b_i,e_i)=(a_i-1,0)$.
If $d_i=2g_i-1>g_i+n$ and $3\le a_i\le n$, we take $b_i=a_i-1$. If $d_i=2g_i-1>g_i+n$ and $a_i=2$ we take $b_i=0$.
\end{proof}
Thus from now on we assume $k\ge 3$, that Theorem \ref{i1} is true for all admissible numerical sets with critical value $<k$
and that $R(n,t)$ is true for all $t<k$ (use Lemma \ref{rnn1}).

Fix a numerical set $\epsilon = (n;s;d_1,g_1;\cdots ;d_s,g_s)$ with critical value $k$. Since $Z(\epsilon)$ is irreducible, to prove Theorem \ref{i1} for $\epsilon$
it is sufficient to find some $X',X''\in Z(\epsilon)$ such that $h^1(\Ii _{X'}(k)) =0$ and $h^0(\Ii _{X''}(k-1)) =0$.

\subsection{$h^1(\Ii _{X'}(k))=0$}

\begin{lemma}\label{v1}
Take an admissible  generalized numerical set $\eta$ with critical value $k-1$ and $e: = \binom{n+k-1}{n-1}-w_{k-1}(\eta)\le
2k$. Take a general $Y\in Z(\eta)$. Then there is $S\subset Y\cap H$ such that $\#S= e$ and $h^i(\Ii_{Y\cup W}(k-1))=0$,
$i=0,1$, where $W = \cup _{p\in S} v_p$ and $v_p$ is a general arrow of $H$ with $p$ as its reduction.
\end{lemma}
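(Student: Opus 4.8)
The plan is to pin down the cohomology of $Y$ alone, reduce the two required vanishings to a single one by an Euler characteristic count, and then add the $e$ arrows one at a time so that each cuts the dimension of the relevant linear system by one. First I would record the numerology of $Y$. Since $\eta$ has critical value $k-1<k$, the inductive hypothesis (Theorem \ref{i1} for admissible sets of critical value $<k$) shows that a general $Y\in Z(\eta)$ has maximal rank. Minimality of the critical value gives $w_{k-2}(\eta)>\binom{n+k-2}{n}$ (for $k=3$ one instead invokes the non-degeneracy of $Y$ from Remark \ref{nn2}); as $h^0(\Oo_Y(k-2))=w_{k-2}(\eta)$ exceeds $h^0(\Oo_{\PP^n}(k-2))$, the degree $k-2$ restriction map cannot be surjective, so by maximal rank it is injective and $h^0(\Ii_Y(k-2))=0$. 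On the other hand $w_{k-1}(\eta)\le\binom{n+k-1}{n}$, so the degree $k-1$ restriction map is surjective and $h^1(\Ii_Y(k-1))=0$; the structure sequence then gives $h^0(\Ii_Y(k-1))=\binom{n+k-1}{n}-w_{k-1}(\eta)=e$.

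Next comes the reduction. Each $v_p$ has its reduction $p\in Y\cap H\subset Y$, and since $Y$ is transversal to $H$ the tangent line of $Y$ at $p$ is not contained in $H$, so the arrow $v_p\subset H$ is transversal to $Y$ and $Y\cup v_p$ has length one more than $Y$ at $p$. Hence $h^0(\Oo_{Y\cup W}(k-1))=w_{k-1}(\eta)+e=\binom{n+k-1}{n}=h^0(\Oo_{\PP^n}(k-1))$, so that $h^0(\Ii_{Y\cup W}(k-1))=h^1(\Ii_{Y\cup W}(k-1))$ and it suffices to prove $h^0(\Ii_{Y\cup W}(k-1))=0$. By Lemma \ref{hor2} we have $a_1+\cdots+a_s\ge 8k-1>2k\ge e$, so the general finite set $Y\cap H$ has more than $e$ points and there is ample room to choose $S$; when $e=0$ there is nothing to add and the claim is immediate.

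The core of the proof is that the $e$ arrows impose independent conditions on the $e$-dimensional space $H^0(\Ii_Y(k-1))$. Because each reduction lies on $Y$, an arrow $v_p$ imposes a single condition—vanishing of the derivative of $\sigma|_H$ along $v_p$ at $p$—and this condition is non-trivial for a general direction unless every section of the current subsystem is singular, inside $H$, at $p$. Adding the arrows inductively, at any step with a subsystem of positive dimension I must produce a further point of $Y\cap H$ at which some member is non-singular; since $Y$ is general and $h^0(\Ii_Y(k-2))=0$, the members of $|\Ii_Y(k-1)|$ separate one-jets along $Y\cap H$, which is precisely the input supplied by \cite[Lemma 2.1]{be7} (whose hypothesis $h^1(\Ii_Y(k-1))=0$ we have already verified). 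Applying that lemma $e$ times, with reductions at distinct general points of $Y\cap H$ and general directions in $H$, forces $h^0(\Ii_{Y\cup W}(k-1))=0$ and hence $h^1(\Ii_{Y\cup W}(k-1))=0$; semicontinuity then yields the statement for a general choice.

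I expect the genuine difficulty to be exactly this independence step: ruling out that, after discarding some of the arrow conditions, the surviving linear subsystem of $H^0(\Ii_Y(k-1))$ becomes everywhere tangent to $H$ along $Y\cap H$. This is where the generality of $Y$—hence of the points $Y\cap H$ and of the chosen arrow directions—is indispensable, and where I would lean on \cite[Lemma 2.1]{be7} rather than reprove the jet-separation by hand. Should that citation not apply verbatim, the fallback is to specialize the reductions into a hyperplane $M\subset H$, turn the arrows into planar double points, and invoke the Alexander--Hirschowitz theorem together with the residual sequence of $M$ in $H$, exactly as in the proof of Lemma \ref{rnn12}.
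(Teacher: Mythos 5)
Your setup is sound and agrees with the paper's framework: by the inductive hypothesis $Y$ has maximal rank, so $h^1(\Ii_Y(k-1))=0$ and $h^0(\Ii_Y(k-1))=e$, while $h^0(\Ii_Y(k-2))=0$; the Euler characteristic count correctly reduces both vanishings to showing that the $e$ arrows impose independent conditions on the $e$-dimensional space $H^0(\Ii_Y(k-1))$; and adding the arrows one at a time, the only obstruction is that at some stage every member of the current subsystem contains $(2p,H)$ for every remaining $p\in Y\cap H$. But at exactly this point — which you yourself flag as ``the genuine difficulty'' — your proof stops being a proof. The statement that $|\Ii_Y(k-1)|$ ``separates one-jets along $Y\cap H$'' is precisely what has to be established, and \cite[Lemma 2.1]{be7} does not supply it: that lemma concerns adding \emph{general points} to a scheme with vanishing $h^1$, whereas here the conditions are arrows supported at points of $Y$ itself (a section of $\Ii_Y(k-1)$ already vanishes at $p$, and the issue is whether its differential restricted to $T_pH$ can be forced to vanish for all sections simultaneously). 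Your fallback also fails: you cannot reduce to a computation inside $H$ via the residual sequence, because $\Res_H(Y\cup W)=Y$ in degree $k-2$ and $h^1(\Ii_Y(k-2))>0$ \emph{necessarily} (the critical value of $\eta$ is exactly $k-1$, so $w_{k-2}(\eta)>\binom{n+k-2}{n}$); moreover the supports of the arrows lie on $Y$, so specializing them into a hyperplane $M\subset H$ means degenerating $Y$ as well, and Alexander--Hirschowitz (which treats general double points) does not apply to these $Y$-bound conditions.

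The paper closes this gap with an argument you would need to reproduce. Suppose the greedy process sticks at a set $A$ of $f$ points, i.e.\ the base locus $\Bb$ of $|\Ii_{Y\cup E}(k-1)|$ contains $(2p,H)$ for \emph{every} $p\in (Y\cap H)\setminus A$. Since for general $Y$ each $Y_i\cap H$ is in uniform position, this propagates to give $(2p,H)\subset \Bb$ for all $p\in Y'\cap H$, where $Y'\subseteq Y$ is a subcurve missing at most a bounded number of components; in particular some $G\in |\Ii_Y(k-1)|$ contains $(2p,H)$ at every point of $Y'\cap H$. Now \emph{move the hyperplane} $H$: since the conclusion holds for a general $H$, the fixed hypersurface $G$ is genuinely singular at every point of $Y'$. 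Finally, the singular locus of $G$ is cut out by the partial derivatives of its equation, which have degree $k-2$; hence $h^0(\Ii_{Y'}(k-2))>0$, contradicting the vanishing in degree $k-2$ forced by Theorem \ref{i1} at critical value $k-1$. You correctly guessed that $h^0(\Ii_Y(k-2))=0$ is the contradiction target, but the bridge from ``failure of jet separation inside a single $H$'' to ``a degree $k-2$ form vanishing on a large subcurve'' — uniform position, the moving-hyperplane step, and the partial-derivative trick — is the missing idea, and without it the induction on arrows does not go through.
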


\begin{proof}
We stress that $v_p \subset H$ for all $p\in S$. To prove the lemma we may assume $e>0$. Assume the existence of $A\subset
Y\cap H$ with
$\#A =f$ and $h^0(\Ii _{Y\cup E}(k-1))=e-f$, where $E$ is a general union of $f$ arrows of $H$ with $E_{\red} =A$, but that
for any $p\in Y\cap H\setminus A$ and any arrow $v_p\subset H$ we have $h^0(\Ii _{Y\cup E\cup v_p}(k-1)) =e-f$, i.e. $v_p$ is
in the base locus of $|\Ii_{Y\cup E}(k-1)|$, i.e. $\Bb$ contains $(2p,H)$ for all $p\in Y\cap H\setminus A$. Set
$t:=
\deg (Y)=a_1+\cdots +a_s$ with $a_i\ge a_j$ for all $i\ge j$. For a general $Y$ we may assume that each $Y_i\cap H$ is in
uniform position. Thus
$Y_i\cap H\subset \Bb$ if $Y_i\nsubseteq A$. Taking as $A$ first $a_1-1$ points of $Y_1\cap H$, then , we get that $\bb$
contains $\cup _{p\in Y\cap H}(2p,H)$, unless $a_1+\cdots +a_s \le s+2k$. In this case there is $Y'\subset Y$ with $Y\setminus
Y$ union of at most $2k$ lines, such that  $\Bb$  contains $\cup _{p\in Y'\cap H}(2p,H)$. Thus there is $G\in |\Ii_Y(k-1)|$
with this property.
Moving $H$ we see that $G$ is singular at each point of $Y'$. Since the singular locus of a hypersurface is the zero-locus of
the partial derivatives of its equation, we get $|\Ii_{Y'}(k-2)|\ne \emptyset$, contradicting Theorem \ref{i1} for the
critical value $k-1$.
\end{proof}
Now we will prove the existence of $X'\in
Z(\epsilon)$ such that $h^1(\Ii _{X'}(k)) =0$. Set $\alpha := \binom{k+4}{4} -\sum
_{i=1}^{s} [(kd_i+1-g_i]$. We have $\alpha \in \NN$. By Lemma \ref{ma2} there is an  admissible generalized numerical set 
$(n;s;a_1,q_1;\dots ;a_n,q_s) \prec (n;s;d_1,g_1;\cdots ;d_s,g_s)$ with critical value $k-1$
and $\beta := \binom{n+k-1}{n} -\sum ' [(k-1)a_i+1-q_i)] \le 2k$.  Take a general $Y =
Y_1\cup \cdots \cup Y_s\in Z(n;s;a_1,q_1;\dots ;a_s,q_s)$ with $Y_i=\emptyset$ if $a_i=0$ and $Y_i\in Z(n;1;a_i,q_i)$ if
$d_i>0$. By the inductive assumption we have $h^1(\Ii _Y(k-1)) =0$ and $h^0(\Ii _Y(k-1)) =\beta$. 

\quad (a) Assume $\beta \le \alpha$.
Take  $T =T_1\cup \cdots \cup T_s\in Z(H;s;d_1-a_1,b_1;\dots ;d_s-a_s,b_s)$, where the integers $b_i$ and the curves $T_i$ have
the following properties. If $d_i=g_i$, then $b_i =0$ and $T_i=\emptyset$. Now assume $d_i>a_i$. By the definition of $\prec$
discussed in Remark \ref{ma7} there are integers $b_i$ and $e_i$ such that $1\le e_i\le a_i$, $q_i+b_i+e_i-1 =g_i$,
the extended numerical set $(n-1;s;d_1-a_1,b_1;\dots ;d_s-a_s,b_s)$ is admissible for $\PP^{n-1}$. By Remark \ref{ma7} we may
find $T= T_1\cup \cdots \cup T_s\in  Z'(H;s;d_1-a_1,b_1;\dots ;d_s-a_s,b_s)$ with the following properties. If
$d_i=a_i$ (and hence
$q_i=g_i$ by the definition of $\prec$) set $T_i =\emptyset$. Now assume $d_i>a_i$. let $T_i$ be a general of
$Z(H;1;d_i-a_i,0)$ containing exactly $g_i-q_i$ points of $Y_i\cap H$; we may do this because $g_i-q_i\le d_i-a_i$ by the
definition of $\prec$, a general smooth rational curve of degree $t$ contains $t+1$ general points of $H$ and the set $Y\cap
H$ is general in $H$.  Set $W:= Y\cup T$ with $X_i = Y_i\cup T_i$. By Remark \ref{smoo1} we have $W\in Z'(n;s;d_1,g_1;\dots
;d_s,g_s)$. By the semicontinuity theorem for cohomology to prove the existence of $X''$ it is sufficient to prove that
$h^1(\Ii_W(k)) =0$. Since $h^1(\Ii _Y(k-1)) =0$, the residual exact sequence of $H$ shows that it is sufficient to prove
$h^1(H,\Ii _{W\cap H}(k)) =0$. The scheme
$W\cap H$ is the union of $T$ and the set $Y\cap (H\setminus T)$. By assumption $h^1(H,\Ii _{T,H}(k)) =0$. Since $\beta \le
\alpha$, we have $\# Y\cap (H\setminus T)\le h^0(H,\Ii _{T,H}(k))=0$. Now we use the assumption that $\eta$ is admissible and
in particular $a_i \ge 2q_i-1$ for all $i$. We apply several times \cite[Lemma 2.1]{be7}.

\quad (b) Assume $\beta >\alpha$. Thus $\alpha \le 2k-1$. Fix a general $S\subset H$ with $\#S=\beta$. By Lemma \ref{rma1} and
$R(n,k-1)$ there is an admissible generalized numerical set
$\tau = (n;s;b_1,e_1;\dots ;b_s,e_s)\prec \epsilon$ with $w_{k-1}(\tau) =\binom{n+k-1}{n}$ and $M\in Z(\tau,S)$ such that
$h^i(\Ii _M(k-1))=0$, $i=0,1$. For any $p\in S$ we take a general arrow of $H$ (not of $\PP^n$ with $p$ as its reduction and
set $v_S:= \cup _{p\in S} v_p$. Set $W:= M\cup T\cup v_S$. By Remark \ref{ma6.0} we have $W\in Z'(\epsilon)$. Thus it is
sufficient to prove $h^1(\Ii _W(k))=0$. Since $\Res _H(W) =M$ and $h^1(\Ii _M(k-1))=0$ by the definition of $R(n,k-1)$, it is
sufficient to prove
$h^1(H,\Ii _{H\cap H,H}(k)) =0$. For any $p\in S$ the Zariski tangent space of $Y\cup v_S$ at $p$ is the tridimensional linear
space $M_p$ by the union of $v_p$ and the tangent lines at $p$ of the two irreducible components of $M$ containing $p$. For a
general $Y$ the linear space $E_p:= M_p\cap H$ has dimension $2$ and the degree $3$ zero-dimensional scheme $(2p,E_p)$ is the
connected component of $W\cap H$ with $p$ as its reduction (use that $Y\cap T=\emptyset$). Set $A:= \cup _{p\in S} (2p,E_p)$. 
Since $W\cap H = A\cup T \cup (Y\cap (H\setminus S\cup T))$, it is sufficient to prove $h^1(H,\Ii _{T\cup A\cup (Y\cap
(H\setminus S\cup T)),H}(k))=0$.

\quad {\bf Claim 1:} $h^1(H,\Ii _{T\cup A,H}(k))=0$.

\quad {\bf Proof of Claim 1:} We have $h^1(\Ii _{T,H}(k))=0$. To conclude the proof use the Proof of Claim 1 in the proof of
Lemma
\ref{rnn12}.

We order the points $p_1,\dots ,p_\beta$ of
$S$. For any
$b\in
\{0,\dots ,\beta\}$ set $A_b:= \cup _{i\le b} (2p_i,E_{p_i})$ with the convention $A_0=\emptyset$. Let $b$ the maximal $b\le
\beta$ such that $h^1(\Ii _{T,H}(k))=0$. Assume $b<\beta$. We work backwards. We use that $\beta \le 2k$ and hence the we only
need to prove that $h^1(H,\Ii _{T\cup Z}(k)) =0$, where $Z$ is a general union of $2k$ $2$-planes. Assume for the
moment $k>3$ and to have done the case $k=3$. Call
$T_3$ the curve obtained for $\Oo_H(3)$ and $W_3$ the $6$ general planar singularities.  Now assume $k\ge 4$. We dismantle $T$
in the following way. We add at each step
$t\Longrightarrow t+1$,
$t=3,\dots k-1$ a reducible curve $T_t$ and two $2$-planar points $W_t$ so that $T_3\cup \cdots \cup T_k$ is a specialization
of the family of curves with $\beta$ singularities of which $T$ is a general member. Set $T':= T_3\cup \cdots \cup T_k$
and $W':= W_3\cup \cdots W_k$. By semicontinuity it is sufficient to construct $T_4,W_4,\dots ,T_k,W_k$ such that $h^1(H,\Ii
_{T'\cup W',H}(k))=0$. Let $M \subset H$ be a hyperplane of $H$. We need  to apply $t-3$ residual exact sequences in which both terms on the
left and on the right have $h^1$ vanishing. To obtain this we need to allow to loose something at each step, but at most
$2t+2$ in the step $t\Longrightarrow t+1$. Thus we conclude by Lemma \ref{hor2}. To do the case $k=3$ we start with $K\in
Z(H;e;b_1,c_1;\dots ;b_e,c_e)$ with $h^1(H,\Ii _{K,H}(2)) =0$ and $h^0(H,\Ii_{K,H}(2))\le 1$ and add in a hyperplane $M$ of $H$
the union $F$
$6$ general planar $2$-points. We have $h^1(M,\Ii_{F,M}(3)) =0$. To have $h^1(M,\Ii _{(M\cap K)\cup F,M}(3)) =0$ and hence by
the residual exact sequence of $M$ in $H$ to get $h^1(H,\Ii _{M\cup F}(3))=0$ it would be sufficient to apply \cite[Lemma
2.1]{be7} if we have the necessary condition $\deg (K)\le h^0(M,\Ii_{F,M}(3))$, i.e. if $\deg (K) \le \binom{n+1}{3}-18$.
We have $\deg (K) =\lfloor \binom{n+1}{2}/3\rfloor$ if $c_i=0$ for all $i$.

By Claim 1 to get $h^1(H,\Ii _{W\cap H,H}(k)) =0$ it is sufficient to quote \cite{be6} as in step (a).

\subsection{$h^0(\Ii _{X''}(k-1))=0$} Now we prove that $h^0(\Ii _{X''}(k-1)) =0$ for a general $X''\in Z(\epsilon)$. Fix a
general
$S\subset H$ with $\#S=\beta$. By Lemma \ref{rma1} and
$R(n,k-1)$ there is an admissible generalized numerical set
$\tau = (n;s;b_1,e_1;\dots ;b_s,e_s)\prec \epsilon$ with $w_{k-1}(\tau) =\binom{n+k-1}{n}$ and $W\in Z(\tau,S)$ such that
$h^i(\Ii _W(k-1))=0$, $i=0,1$. For any $p\in S$ let $v_p$ be a general arrow of $\PP^n$. Set $W':= W\cup _{p\in S} v_p$.
By Remark \ref{ma6.0} $W'\in Z'(\eta)$. Since $h^0(\Ii _W(k-1)) =0$, we have $h^0(\Ii_{W'}(k-1))=0$. By the semicontinuity
theorem for cohomology we have $h^0(\Ii _U(k-1)) =0$ for a general $U\in Z(\eta)$. By our definition of $\prec$ there is
$X\supseteq U$ with $X\in Z'(\epsilon)$. Thus $h^0(\Ii _X(k-1)) \le h^0(\Ii _U(k-1))=0$. Then we continue as in the $h^1$-case.

\providecommand{\bysame}{\leavevmode\hbox to3em{\hrulefill}\thinspace}

\end{document}